\newlength{\mcm}
\newlength{\imcm}
\newlength{\jmcm}
\newtheorem{prop}{Proposition}
\newtheorem{Thm}{Theorem}
\newtheorem{rem}{Remark}
\newtheorem{remark}{Remark}
\newtheorem{Remark}{Remark}
\newcommand{\ms}[1]{\mathbb{#1}}
\newcommand{\mc}[1]{\mathcal{#1}}
\newcommand{\lip}{Lipschitz }
\newcommand{\R}{\mathbb{R}}
\newcommand{\ve}{\varepsilon}
\def\1{\mathchoice {\rm 1\mskip-4mu l} {\rm 1\mskip-4mu l}
{\rm 1\mskip-4.5mu l} {\rm 1\mskip-5mu l}}
\newcommand{\Id}{\mathrm{Id}}
\newcommand{\mQ}{m}
\begin{document}
%\addtolength{\baselineskip}{+1.\baselineskip}
\author{Alain Trouv\'e and Fran\c{c}ois-Xavier Vialard.}
\address[Alain Trouv\'e]{CMLA \\Ecole Normale
  Sup\'erieure de Cachan, CNRS, UniverSud\\ 61, avenue du Pr\'esident Wilson\\ F-94 235
  Cachan CEDEX}
\email{alain.trouve@cmla.ens-cachan.fr}
\address[Fran\c{c}ois-Xavier Vialard]{Institute for Mathematical Science\\ Imperial College London\\
53 Prince's Gate, SW7 2PG, London, UK}
\email{francois.xavier.vialard@normalesup.org}
\title[Shapes splines]{Shape Splines and Stochastic Shape Evolutions: \\A
Second Order Point of View}
\date{\today}
\maketitle
\tableofcontents
\begin{abstract}
This article presents a new mathematical framework to perform
statistical analysis on time-indexed sequences of 2D or 3D shapes. 
At the core of this statistical analysis is the task of time
interpolation of such data. Current models in use 
can be compared to linear interpolation for one dimensional data. 
We develop a spline interpolation method which is directly 
related to cubic splines on a Riemannian manifold. Our strategy 
consists of introducing a control variable on the Hamiltonian
equations of the geodesics. Motivated by statistical modeling 
of spatiotemporal data, we also design a stochastic model to deal 
with random shape evolutions. This model is closely related to the 
spline model since the control variable previously introduced is set 
as a random force perturbing the evolution.

Although we focus on the finite dimensional case of
landmarks, our models can be extended to infinite dimensional shape
spaces, and they provide a first step for a non parametric growth model
for shapes taking advantage of the widely developed framework of 
large deformations by diffeomorphisms.
\end{abstract}
\section{Introduction}
Mathematical and statistical modeling of shapes has undergone
significant development over the past twenty years
driven by a wide range of applications in medical imaging. Initially
the focus was on the comparison between two
shapes also refereed to as registration. Among others, registration and comparison tools 
derived from a Riemannian point of view on shapes spaces and diffeomorphic transport
have been actively developed during the past few years.
% This comparison has been made with
% the help of Riemannian metrics on the space of shapes and the research
% has been actively developed since the past few years. 
This  framework
was used to represent shapes and study the statistical variation of
static shapes within a population. An emerging question of interest is
now to study time dependent data of shapes (images, landmarks,
surfaces or tensors). The basic dataset is then a sequence of shapes
indexed by time. For example, a practical application would be the
analysis of follow-up studies in brain imaging.

Several attempts models for the variability of
longitudinal data have been proposed recently: a parametric model 
of growth is proposed in
\cite{DBLP:conf/isbi/GrenanderSS06,DBLP:journals/tmi/GrenanderSS07},
which aims to describe the biological evolution as an iteration of
random elementary diffeomorphisms, so called GRID. Focusing on image
data, statistical estimation of the parameters are performed with the
GRID model in
\cite{DBLP:conf/iciar/PortmanGV09,DBLP:conf/emmcvpr/SrivastavaSDG05}. Other
attempts are often based on using an initial registration tool (such as
geodesics on a group of diffeomorphisms, see \cite{Younes2009S40} for
a large overview) to interpolate the time dependent data with
piecewise geodesics \cite{durlemann,4541298}. In \cite{durlemann}, the
model is further developed with the introduction of time realignments to
allow the study of an ensemble of longitudinal data and the computation 
of an averaged space-time evolution.

From the modeling point of view a typical growth evolution is
usually smooth in time. However, the piecewise geodesic
models underlying current analysis of time dependent shape data 
can not prevent a loss of regularity at the observation
points. Moreover, from a more classical statistical point of view,
piecewise linear regression does not provide the best interpolation
framework and  from a probabilistic point of view the limiting process 
underlying piecewise geodesic interpolation is more or less a Kunita
flow \cite{Kunita} that has a Brownian like time evolution. 
These remarks suggest that a better model of interpolation in time 
should be of higher order than piecewise geodesics, and also closely 
related to a probabilistic model of random evolution of shapes. 
The question of time interpolation was
addressed in \cite{4408977} with the use of a kernel on the time
variable but still with underlying piecewise geodesics on the
data. Random evolutions of shapes have been treated for instance in
\cite{drydenbook,kendalldiffusion}, but here again the model is of
first order in the sense that the evolution is not smooth in time.

In this work, we present a second order model for deterministic and 
stochastic shape evolution as a primary step toward further
statistical applications. This work will extensively use the 
Hamiltonian framework that emerged several years ago in the field of 
Computational Anatomy (CA) to compute the registration of landmarks 
(points of interest on an image)
\cite{gty06,landmark_matching,michor-2007-23}. This problem of
landmark registration is equivalent to finding geodesics on the
Riemannian manifold of landmarks. Thus shooting methods as well as
gradient descent have been applied to solve this problem
\cite{bb37524,1122513}.

A first natural step to deal with random shape evolution is 
to study a stochastic perturbation of the Hamiltonian equations of 
geodesics. If we consider the evolution of landmarks as a physical 
system of particles, it corresponds to the introduction of a random 
force on their evolution providing smooth time random perturbation 
around a mean geodesic trajectory. However, this in turn leads to a 
new deterministic counterpart when the stochastic term in the
evolution of the momentum is replaced by a deterministic 
control variable. Given a sequence of time-indexed shapes, and
optimizing on the control variable, we end up with a framework
for smooth time interpolation between shapes. 
% It gives rise to mathematical questions such as the 
% possible blow-up of the stochastic
% solutions.
% Once this question is solved, the question of a mean
% trajectory of a given sample of shape evolutions leads us to extend
% this simple model to describe non-geodesic evolutions of shapes. To
% this aim we need a deterministic framework to interpolate time
% sequences of shapes. A natural idea is to replace the stochastic term
% in the evolution of the momentum by a control variable. Hence we would
% end up with a smooth time interpolation between shapes.
In the finite dimensional case of landmarks, this approach is directly 
related to splines on a Riemannian manifold
\cite{Crouch,Noakes1,jackson90:_dynam_inter_and_applic_to_fligh_contr,splinesCk}.
We will present a Hamiltonian approach to derive the equations for 
the splines in a Hamiltonian setting. It can be related to the work of 
\cite{parallel} since they use a control point of view to obtain the 
Hamiltonian equations but our approach differs from theirs in that we use the
Hamiltonian formulation of geodesics on landmark space. Hence 
there is a straightforward generalization to other Hamiltonian systems.
The possible extension of the Hamiltonian equations of geodesics to
curves, surfaces (or embeddings of manifolds in $\ms{R}^n$) is the key
feature to design second order models of evolution on continuous shape spaces
\cite{gty06,michor-2007-23,VialardThesis} but in this work we will
concentrate on the finite dimensional case of landmarks.
Note however, that contrary to the initial context of air-craft
trajectory planning (where the target manifold is the low-dimensional Lie Group 
$\mathrm{SE}(3)$ - the Special Euclidean group on $\mathbb{R}^3$), for
which splines on Riemannian manifold were introduced - shape spaces in 
Computational Anatomy are more strongly connected with the infinite
dimensional case. Moreover, we will consider both the deterministic
and the stochastic setting since these are considered as fundamental
ingredients towards statistical analysis of longitudinal data.

The plan of the paper is as follows: we first introduce in section
\ref{HamSec} the Hamiltonian equations for the case of landmarks, which
constitute the central tool in this article. The section \ref{SplineModel} is
devoted to the spline model with different metrics and section
\ref{sec:Existence} gathers theoretical results about the global
existence in time of the second order deterministic spline evolution, 
the existence of a minimizer for the spline estimation problem and
the derivation of the Euler-Lagrange equations. Then in section
\ref{sec:numexp}, we present some numerical experiments about shape 
spline estimation on 2D shape evolution, highlighting some of their
main features. In section \ref{StoModel}, we turn to problem of 
stochastic second order spline evolutions with a proof of the
well-posedness and global existence in time and a few illustrative
simulations. Last we develop in section \ref{sec:homogeneous} a
slightly more general picture of shape splines on homogeneous spaces 
extending the model beyond the landmark setting.
\section{Hamiltonian equations of geodesics for landmark matching}\label{HamSec}

The problem of landmark matching via diffeomorphic transport is now quite well understood. The basic idea is to build a continuous path of minimal length $\phi_t$ starting from $\phi_0=\Id_{\mathbb{R}^d}$  in a group $G_V$ of 
diffeomorphisms, between an initial configuration
$x=(x_i)_{1\leq i\leq n}$ of $n$ landmarks in $\mathbb{R}^d$ and a target configuration $y=(y_i)_{1\leq i\leq n}$~. Thus, if $x_{i,t}\doteq\phi_t(x_i)$ and $x_t=(x_{i,t})_{1\leq i\leq n}$, $x_t$ is a path from $x$ to $y$ in the space of landmark configurations induced by a geodesic in a Riemannian space of diffeomorphisms. The group $G_V$ is defined through the flow of time 
dependent velocity fields $(t,x)\to v_t(x)$ on $\mathbb{R}^d$
\begin{equation}
  \label{eq:15-1}
  \frac{\partial \Phi}{\partial t}=v_t\circ \Phi
\end{equation}
where $V$ is a Hilbert space of velocity fields and $v\in
L^2([0,1],V)$ is an element of  the space of time dependent velocity fields with finite $L^2$
norm. Obviously, the existence of a flow $t\to \Phi^v_t$ for $v\in
L^2([0,1],V)$ solution of (\ref{eq:15-1}) depends on some regularity
assumptions of the instantaneous velocity fields $v_t$, mainly the control
of the first order derivatives of $v_t$. Assuming this, the group $G_V$
is defined as $G_V\doteq \{\Phi^v_1\ |\ v\in L^2([0,1],V)\}$ and the
diffeomorphic matching problem for landmarks is formulated through the
following variational problem

\begin{equation}
  \label{eq:15-2}
\left\{
  \begin{array}[h]{l}
    \min \int_0^1|v_t|_V^2dt,\ v\in L^2([0,1],V)\\
\text{with}\\
\Phi^v_1(x_i)=y_i,\ 1\leq i\leq n\,.
  \end{array}\right.
\end{equation}

The problem (\ref{eq:15-2}) is well posed as soon as $V$ is
continuously embedded in $C^1_0(\mathbb{R}^d,\mathbb{R}^d)$ the space
of $C^1$ velocities vanishing at $\infty$ (such a $V$ is called an
admissible space). Namely, we assume the existence of a constant $C$ 
such that for any $v \in V$, the following inequality is verified
\begin{equation} \label{C1inj}
|v|_{1,\infty} \leq C |v|_V \, . 
\end{equation}
Such an admissible space is a Reproducible Kernel
Hilbert Space (RHKS) and is equipped with a kernel $K_V(z,z')$ playing a
key role in defining the structure of the solution of the geodesic
emerging from (\ref{eq:15-2}). Since the reader may not be familiar
with RKHS, let us say in a nutshell that for any $z,z'\in
\mathbb{R}^d$ the kernel $K_V(z,z')$ is a $d\times d$ matrix in
$\mathcal{M}_d(\mathbb{R})$, that $z\to K_V(z,z')\alpha'\in V$ for any
$z',\alpha'\in \mathbb{R}^d$, that $\langle
K_V(.,z')\alpha',K_V(.,z'')\alpha''\rangle_V=\alpha'^TK_V(z',z'')\alpha''$
(the so called reproducible property) and that $\langle v,
K_V(.,z')\alpha'\rangle_V=\langle v(z'),\alpha'\rangle_V$ for any $v\in
V$. The main fact is that given $K_V$, the space $V$ is completely
defined and one may start from the choice of the kernel $K_V$
itself to define the space $V$. A large number of 
kernels have been proposed most commonly the Gaussian kernel
\begin{equation}
K_V(z,z')=\exp(-\frac{|z-z'|^2}{\lambda^2})\Id_{\mathbb{R}^d}\,.\label{eq:15-4}
\end{equation}

Even more importantly, the kernel appears explicitly in 
geodesics emerging from (\ref{eq:15-2}) as described by the following Theorem.
To ensure the existence of a solution in this theorem, we need to assume that the kernel is positive definite:
%% YA Don't you need P.D to define the RKHS????
\begin{equation}\label{controlabilite}
|\sum_{i=1}^l K_V(x_i,.)\alpha_i|_V^2 = 0 \Rightarrow \forall \; i \; \alpha_i = 0 \,.
\end{equation}

\begin{Thm}[cf \cite{gty06}]The solution  $v\in L^2([0,1],V)$ exists and satisfies
  \begin{equation}
v_t(z)=\sum_{i=1}^n K_V(z,x_{i,t})p_{i,t}\label{eq:15-5}
\end{equation}
where $t\to (x_t,p_t)$ is solution of 
\begin{equation}
  \label{eq:15-3}
\left\{
  \begin{array}[h]{l}
      \dot{x}_t=\frac{\partial H_0}{\partial p}(x_t,p_t)\\
\\
\dot{p}_t=-\frac{\partial H_0}{\partial x}(x_t,p_t)
  \end{array}
\right.
\end{equation}
with 
$H_0(p,x)\doteq \frac{1}{2}\sum_{i,j}p_i^TK_V(x_i,x_j)p_j$
and $x_0=x$. 
\end{Thm}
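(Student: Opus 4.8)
The plan is to combine the direct method of the calculus of variations (for the existence of the minimizer) with a finite-dimensional reduction coming from the reproducing kernel structure of $V$, and then to recognize the reduced problem as the geodesic problem for a Riemannian cometric on landmark space.

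First I would prove that (\ref{eq:15-2}) admits a minimizer. The admissible set is nonempty: the positive definiteness assumption (\ref{controlabilite}) guarantees that at any configuration the velocities $(v(x_i))_{1\le i\le n}$ obtained from fields of the form $\sum_i K_V(\cdot,x_i)\alpha_i$ fill all of $(\mathbb{R}^d)^n$, so the landmark dynamics is controllable and some $v\in L^2([0,1],V)$ steers $x$ to $y$. Given a minimizing sequence $v^k$, boundedness in the Hilbert space $L^2([0,1],V)$ produces a weakly convergent subsequence $v^k\deb v$; by the embedding (\ref{C1inj}) into $C^1_0$ the flow map $v\mapsto\Phi^v_1$ is continuous on bounded sets for the weak topology, so the endpoint constraints $\Phi^v_1(x_i)=y_i$ survive in the limit, while $v\mapsto\int_0^1|v_t|_V^2\,dt$ is weakly lower semicontinuous, being the square of a Hilbert norm. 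Hence the infimum is attained.

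Next I would perform the reduction. Let $v$ be optimal and put $x_{i,t}\doteq\Phi^v_t(x_i)$, so $\dot x_{i,t}=v_t(x_{i,t})$; since $\Phi^v_t$ is a diffeomorphism the $x_{i,t}$ stay pairwise distinct and the Gram matrix $K(x_t)\doteq(K_V(x_{i,t},x_{j,t}))_{ij}$ is invertible. For fixed $t$, among all $w\in V$ with prescribed values $w(x_{i,t})$, the reproducing property shows that $|w|_V$ is minimal for the orthogonal projection of $w$ onto $\mathrm{span}\{z\mapsto K_V(z,x_{i,t})\alpha\}$, which has the form $w=\sum_i K_V(\cdot,x_{i,t})p_{i,t}$ with $p_t=K(x_t)^{-1}\dot x_t$, and then $|w|_V^2=\sum_{i,j}p_{i,t}^TK_V(x_{i,t},x_{j,t})p_{j,t}=2H_0(p_t,x_t)$. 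Optimality forces $v_t$ to equal this projection for a.e.\ $t$, which is (\ref{eq:15-5}); it also reduces (\ref{eq:15-2}) to minimizing $\int_0^1 2H_0(p_t,x_t)\,dt$ over curves with $\dot x_t=K(x_t)p_t=\partial H_0/\partial p(x_t,p_t)$, i.e.\ to the energy-minimizing geodesic problem for the cometric $K(x)$ on the configuration space of $n$ distinct points in $\mathbb{R}^d$.

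Finally I would obtain (\ref{eq:15-3}). Rewriting the reduced functional in Lagrangian form $\int_0^1\tfrac12\dot x_t^TK(x_t)^{-1}\dot x_t\,dt$ and computing the first variation yields the geodesic equations, and passing to the dual variable $p=K(x)^{-1}\dot x$ turns them into Hamilton's equations $\dot x=\partial H_0/\partial p$, $\dot p=-\partial H_0/\partial x$ with $x_0=x$; equivalently one may vary $x$ and $p$ directly in $\int_0^1(\langle p_t,\dot x_t\rangle-H_0(p_t,x_t))\,dt$. Smoothness of $K_V$ makes $H_0$ smooth, so Cauchy--Lipschitz applies and the minimizer produced in the existence step furnishes the required solution on $[0,1]$. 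I expect the genuine work to lie in the existence step — nonemptiness of the admissible set through controllability, and the weak continuity of $v\mapsto\Phi^v_1$ built on the $C^1$ embedding (\ref{C1inj}) — whereas the kernel projection and the Euler--Lagrange derivation are essentially formal.
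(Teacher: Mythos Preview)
Your argument is correct and essentially standard for this class of results. Note, however, that the paper does not actually prove this theorem: it is quoted from \cite{gty06}, and the surrounding text only gives a brief heuristic derivation of the Hamiltonian form via the Pontryagin Maximum Principle. In that sketch one writes $H(p,x,u)=\langle p,u\cdot x\rangle-\tfrac12\langle u,u\rangle_V$, maximizes in $u$ to obtain $u=k(\cdot,x)p$, and reads off the reduced Hamiltonian $H_0$ and equations (\ref{eq:15-3}) directly from PMP; the existence of a minimizer is not discussed at all. Your route is more self-contained: you supply the existence step by the direct method (weak compactness in $L^2([0,1],V)$, weak continuity of $v\mapsto\Phi^v_1$ under the $C^1_0$ embedding, and weak lower semicontinuity of the energy), then obtain (\ref{eq:15-5}) by the RKHS orthogonal-projection argument, and finally derive (\ref{eq:15-3}) as the Euler--Lagrange equations of the reduced Lagrangian followed by the Legendre transform $p=K(x)^{-1}\dot x$. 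The PMP viewpoint is quicker once existence is granted and generalizes readily to other actions and shape spaces (which is why the paper emphasizes it), whereas your projection argument makes the role of the kernel and the finite-dimensional reduction more explicit. One small caveat worth stating: controllability from (\ref{controlabilite}) gives only infinitesimal surjectivity onto $(\mathbb{R}^d)^n$ at each configuration; reaching an arbitrary target $y$ from $x$ also uses that the configuration space of $n$ distinct points in $\mathbb{R}^d$ is path-connected, which holds for $d\geq 2$.
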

An interesting fact is that the evolution (\ref{eq:15-3}) is a 
Hamiltonian evolution induced by the \textbf{Hamiltonian} $H_0$ on the system of 
landmarks $x=(x_1,\cdots,x_n)$. From a mechanical point of view, the 
landmarks represent the positions of $n$ particles in 
$\mathbb{R}^d$ and each $p_i$ is the \textbf{momentum} attached to the
particle $x_i$. During the evolution, the particles  interact
and the equation $\dot{p_i}=-\frac{\partial H_0}{\partial x_i}(p,x)
$ implies that the time derivative of the momentum of the $i^{th}$ particle is equal
to the \textbf{internal forces} $-\frac{\partial H_0}{\partial x_i}(p,x)$ 
acting on it. This is a simple extension of the usual 
\textbf{Newton equations}. 

\begin{remark}
Note that when there is no interaction between the particles i.e. $K_V(z,z')=\frac{\mathbf{1}_{z=z'}}{m}\text{Id}_{\mathbb{R}^d}$, then $H_0(p,x)=\frac{1}{2m}\sum_{i=1}^n |p_i|^2$ and the system (\ref{eq:15-3}) reduces to $m\dot{x}_i=p_i$ and $\dot{p}_i=m\ddot{x}_i=0$ so that the particles evolve independently along straight lines at constant speed. Though this kernel is not admissible, it is not even continuous, this case can be view as a limit case when the characteristic scale $\lambda\to 0$ in (\ref{eq:15-4}). When the particles interact, 
the evolutions of the particles are no longer straight lines. The velocity of a given particle is the aggregate of the contributions coming from every particle as described in (\ref{eq:15-5}).
\end{remark}

\vspace{0.2cm}

This Hamiltonian formulation can be efficiently retrieved by the application of the Pontryagin Maximum Principle (PMP)\cite{MR2062547}.
The previous assumption in \eqref{controlabilite} implies that the following control problem is controllable (since for every $\mathbf{v}=(v_1,\ldots,v_n)  \in \mathbb{R}^{dn}$
there exists $u \in V$ such that $\mathbf{v}= u\cdot x\doteq
(u(x_i))_{1\leq i\leq n}$).
% \begin{equation}
% \begin{cases}
% \dot{x} = u \cdot x \, \\
% x(i) = x_i \;\text{ for } i=0,1
% \end{cases}
% \end{equation}
For any chosen (initial and final) configurations of landmarks, there exists a path between the two landmarks sets. 
Following an optimal control point of view, we can introduce the Hamiltonian of the system
\begin{equation}
H(p,x,u) = \langle p,\dot{x} \rangle - \frac{1}{2}\langle u,u \rangle_V = \langle p,u \cdot x \rangle - \frac{1}{2}\langle u,u \rangle_V 
\end{equation}
minimizing $H$ w.r.t. to $u$ we obtain:
\begin{equation}
u(.) = k(.,x)p \,,
\end{equation}
with the compact notation $k(.,x)p \doteq \sum_{i=1}^n K_V(.,x_i)p_i$. Then the minimized Hamiltonian can be written as:
\begin{equation}
H_0(p,x) = \frac{1}{2} \sum_{i=1}^n \langle p_j , K_V(x_j,x_i) p_i
\rangle  = \frac{1}{2} \langle x ,k(q,q) x\rangle \,. 
 \label{eq22.2.1}
\end{equation}
Hence we get the Hamiltonian equations of the previous theorem by directly applying the PMP.

\vspace{0.2cm}

It is rather standard to prove that this approach endows the landmark space (set of groups of distinct points) with a Riemannian metric which is induced from that of the group of diffeomorphisms. More generally once a positive definite kernel is given, it gives rise to a Riemannian metric on the space of landmarks. We then  use the term kernel metric to designate such a Riemannian metric on the space of landmarks.
% \\
% The question of the completeness of the riemannian manifold depends on the kernel. For example, the Gaussian kernel gives rise to a complete metric on the contrary to the exponential kernel. 
% \\

The Hamiltonian framework (also viewed from an optimal control viewpoint) can be generalized to shape spaces where the landmark space $\mathcal{L}$ is replaced with with $L^2(\ms{R}/\ms{Z},\ms{R}^2)$ for closed curves in the plane \cite{gty06} or measures. It has been also generalized to discontinuous images \cite{VialardThesis}. However when deforming embedded objects in $\ms{R}^d$ through the action of smooth vector fields,  exact matching usually impossible. The associated control problem is not controlable any more. This is the reason why the Hamiltonian equations are established for the inexact matching problem which includes a penalty term in the minimization. The effect of this penalty term is often to smooth the structure of the momentum used in the Hamiltonian formulation.
\section{Spline interpolation on landmark space: a generative growth model}\label{SplineModel}
\subsection{The standard growth model}
The usual growth model formulation as derived in \cite{mty02} was
initially defined in the case of images. Assume that one has a 
continuous time sequence of noisy image data $I_t^D,\ t\in [0,1]$, and 
an image template $I_0$. The basic growth estimation problem is 
to estimate a path  $\phi_t,\ t\in [0,1]$ in the diffeomorphic group $G_V$
(a sequence of deformations) such that $I^D_t\simeq
\phi_t\cdot I_0$ with $\phi\cdot I=I\circ \phi^{-1}$. From a Bayesian
perspective, this inverse problem is cast as the minimization
 of the cost
\begin{equation}
   J^I(v)=\frac{1}{2}\int_0^1|v_t|_V^2dt +
  \frac{\lambda}{2}\int_0^1|I^D_t-\phi_t^v \cdot I_0|^2dt
\end{equation}
where $\phi^v_t$ is constrained to be the flow of $v$ starting at 
$\Id$ at time $0$ and $v\in L^2([0,1],V)$. Note that the underlying 
stochastic model for $v$ is white noise in $L^2([0,1],V)$ so that
the associated flow $\phi$ is a  \textbf{Kunita flow} \cite{Kunita}
with almost nowhere differentiable trajectories $t\to\phi_t$.

In the landmark setting addressed in this paper, 
assume that we have a sequence $x^D_t$ of data (where
$x^D_t=(x^D_{t,i})$ is an $n$-tuple of $d$ dimensional landmarks) and a
template $x_0$. A straightforward adaptation of the above growth model
leads to the new cost to be minimized
\begin{equation}
  J^x(v)=\frac{1}{2}\int_0^1|v_t|_V^2dt +
  \frac{\lambda}{2}\int_0^1|x^D_t-\phi_t^v\cdot x_0|^2dt
\label{eq:21.12.1}
\end{equation}
where $\phi\cdot x_0 =(\phi(x_{0,i}))$, and the squared distance $|x^D_t-\phi_t^v\cdot x_0|^2$ comes from a standard Gaussian white noise model the on individual landmarks. 
As above the underlying stochastic growth model is a Kunita flow for $\phi_t$ with irregular trajectories (see Fig. \ref{Kunita}). 

However, considering that $x^D$ is continuously observed in time, the minimization problem (\ref{eq:21.12.1})  can be cast into an optimal control problem  associated to
the reduced Hamiltonian $H$ given by \cite{macki1982introduction}
$$H(p,x,t)=H_0(p,x)-\frac{\lambda}{2}|x^D_t-x|^2
\text{ where }H_0(p,x)=\frac{1}{2}\sum_{i,j}p_i^TK(x_i,x_j)p_j\,.$$

The associated optimal trajectories are solutions of the Hamiltonian
flow:
\begin{equation}
  \label{eq:21-10}
\left\{
  \begin{array}[h]{l}
      \dot{x}_t=\frac{\partial H_0}{\partial p}(p_t,x_t)\\
\\
\dot{p}_t=-\frac{\partial H_0}{\partial x}(p_t,x_t)+\lambda (x_t-x^D_t)
  \end{array}
\right.
\end{equation}
The optimal 
 time dependent vector field $v$ can be
reconstructed through the equality
$$v_t(z)=\sum_{i} K(z,x_{t,i})p_{t,i}\,.$$

When $\lambda\to 0$, the optimal evolution converges to a geodesic
evolution given by the usual Hamiltonian $H_0$. This is not surprising
since in \eqref{eq:21-10} the second term of the right hand side
vanishes and leads to the minimization of the kinetic energy.

Note that the filtered trajectory $x$, obtained from the observation
 $x^D$, is $C^1$ in time for continuous time observations. It thus has the
advantage of correcting the poor prior stochastic growth model given by Kunita flows. 
However, this is due to the fact that we are assuming
 \emph{continuous} time observations. In contrast, for discrete  observation times $t_k$, the optimal 
solution is piecewise geodesic, again no more $C^1$, and offers
limited interpolation properties. A better prior on $x$ is needed.

\subsection{Perturbative growth model}\label{stochmodel} 
Denoting $u_t=\lambda(x_t-x^D_t)$ the perturbation from the geodesic evolution,  the estimated trajectory $x$ 
extracted from the noisy observation $x^D_t,\ t\in [0,1]$ can be 
reconstructed from the integration of 
\begin{equation}
  \label{eq:25.1}
\left\{
  \begin{array}[h]{l}
      \dot{x}_t=\frac{\partial H_0}{\partial p}(p_t,x_t)\\
\\
\dot{p}_t=-\frac{\partial H_0}{\partial x}(p_t,x_t)+u_t
  \end{array}
\right.
\end{equation}
derived from the ODE (\ref{eq:21-10}) if 
one knows the initial momentum $p_0$ and $u$. The fundamental idea is that 
this (ODE) can play the role of a generative engine for $C^1$ trajectories
if we put the proper constraint on $u$ seen as a \emph{control} variable.
Interestingly, from a mechanical perspective, $u_t$ can be interpreted as\textbf{ external forces} acting on the landmark configuration ($u_{t,i}$ is the force acting on particle $i$). In absence of external force, the landmark configuration follows a geodesic evolution. When the external forces do not 
vanishing, the trajectory followed by these landmarks deviates from a 
simple geodesic to various interpolation trajectories. The question of the dynamics of these external forces is of importance.
The first step is to note that when $u_t=\lambda(x-x^D_t)$, the cost of the filtered trajectory $x$ can be written as
\begin{equation}
  \label{eq:23.1}
  J^x(v)=\frac{1}{2}\int_0^1|v_t|_V^2dt +\frac{1}{2\lambda}\int_0^1|u_t|^2dt\,,
\end{equation}
so the minimization of $J^x$ provides a control on both terms of the right hand side,
both positive quantities.
Moreover, for a given distribution $P_{x^D}$ on $x^D$, the associated 
distribution $P_x$ on the filtered trajectories $x$ can be defined 
through the generative model (\ref{eq:21-10}) for the adequate 
distribution $P_0$ on $(p_0,u)$. Very little can be said on the distribution 
$P_0$, but as a first approximation we can proceed as follows
If the overall model minimization of (\ref{eq:23.1}) for any observation 
$x^D$ provides a reasonable filtered trajectory $x$ and since for such $x$, 
the quantity $\frac{1}{2\lambda}\int_0^1|u_t|^2dt\leq J^x(v)$ should stay 
small (at least controlled), one can put \textbf{a constraint} on its 
expectation  under $P_0$ 
\begin{equation}
  \label{eq:24.1}
  E_0\doteq E_{P_0}(\int_0^1|u_t|^2dt)\,.
\end{equation}
Using no more information and maximizing entropy \cite{edwin1957jaynes,kapur1989maximum} under the constraint (\ref{eq:24.1}), a reasonable first order model for the marginal distribution of $u$ 
is given by  a \textbf{standard white noise} i.e. $u_t=\sigma dB_t$ where it is reasonable to set $\sigma^2$  close\footnote{The different approximations done in this derivation should encourage us to be a little cautious about any strong statement!}  to the value $\lambda$.

The marginal distribution on $p_0$ is more problematic, as well as the conditional distribution of $u$ given $p_0$. The simplest solution is to assume independence of $p_0$ and $u$:
$$P_0(dp_0,du)=P_0(dp_0)\otimes P_0(du)$$
so that that conditional generative model ($p_0$ fixed) for the trajectory $x$ can be 
defined as the $SDE$
\begin{equation}
  \label{eq:24.2}
\left\{
  \begin{array}[h]{l}
      d{x}_t=\frac{\partial H_0}{\partial p}(p_t,x_t)dt\\
\\
d{p}_t=-\frac{\partial H_0}{\partial x}(p_t,x_t)dt+\sigma dB_t
  \end{array}
\right.
\end{equation}
\subsection{New growth estimation minimization problem}
This new growth model  generates  $C^1$ solutions and can be 
the foundation for a new formulation of the growth estimation problem in 
the realistic situation of sparse discrete observation times. Indeed, if 
we consider $M$ observation times $t_1,\cdots,t_M$, this new 
prior can be used to derive a new growth estimation method~:
\begin{equation}
  \label{eq:25.2}
\left|
  \begin{array}[h]{l}
    \displaystyle{ \inf_{p_0,u}J(p_0,u)\doteq \left\{E(p_0)+\frac{1}{2}\int_0^1|u_t|^2dt + \gamma \sum_{k=1}^M |x^D_{t_k}-x_{t_k}|^2\right\}}\\
\\
\text{subject to $(x,p)$ solution of the ODE (\ref{eq:25.1})  }
  \end{array}\right.
\end{equation}
where $(p_0,u)\in \mathbb{R}^{nd}\times L^2([0,1],\mathbb{R}^{nd})$ with initial conditions $(x_0,p_0)$ and $E$ comes from the log-likelihood of the 
prior $P_0(dp_0)$ on $p_0$.

\begin{Remark}   
 The choice of the regularization term $E$ for $p_0$ does not seem important in the finite dimensional case we are considering here.
An improper flat prior can be used (as we do in the application below) and the term can be dropped.
Alternatively consider $p_0$ fixed to $0$. The weight on $u$ is however of particular
importance since it controls the deviation from a geodesic evolution corresponding to $u\equiv 0$.
\end{Remark}
  One can consider more general penalties on $u$ such as
$$\int_0^1 \langle K_{x_t}u_t,u_t\rangle dt$$
for a state dependent metric ($K_x$ is assumed here to be a positive
symmetric matrix). However, there should be some
rationale for the final choice.

At this point we should mention  the Riemannian cubic spline point of view 
developed in \cite{Noakes1}. In this framework, we start from a finite dimensional Riemannian manifold $M$ and the basic problem is to interpolate between two configurations $(x_0,\dot{x}_0)$ and $(x_1,\dot{x}_1)$ with a smooth curve $\gamma$ minimizing an extended \emph{bending} energy~:
$$
\left|
\begin{array}[h]{l}
  \min_\gamma B(\gamma)\doteq \int_0^1 |\nabla_{\dot{\gamma}}\dot{\gamma}|_\gamma^2dt\\
\text{subject to}\\
 \gamma(0)=x_0,\ \dot{\gamma}(0)=\dot{x}_0,\ \gamma(1)=x_1\text{ and }\dot{\gamma}(1)=\dot{x}_1
\end{array}\right.$$
where $\nabla$ is the Levi-Civita connexion and $|\ |_\gamma$ is the 
metric given by the manifold at the current location $\gamma$. It is quite clear that in the situation $M=\mathbb{R}^d$ with the flat Euclidean metric, we get $B(\gamma)=\int_0^1|\ddot{\gamma}|^2dt$ corresponding to the 
classical cubic splines \cite{schoenberg46:_contr_to_probl_of_approx,ahlberg1967theory}. Moreover, one can check (see appendix) that in our Hamiltonian framework, we have 
\begin{equation}
  \label{eq:31.1}
  u=\dot{p}+\partial_xH_0=K_x^{-1}\nabla_{\dot{x}}\dot{x}
\end{equation}
where $K_x\doteq (K_V(x_i,x_j))_{ij}$ and $K_x^{-1}$ is the metric tensor 
at the current landmark positions. Thus, the bending energy is given in our situation as a function of $u$~:
$$B(u)=\int_0^1\langle K_x u,u\rangle dt=\int_0^1|u|_{x,*}^2dt$$
where $|u|_{x,*}$ is the induced metric on the cotangent space.
\begin{Remark}
  This 
penalization is different from the simpler $L^2$ norm we derived 
previously. On one hand, the 
Riemannian cubic splines are completely defined by a unique metric 
underlying the Riemannian structure of the manifold, keeping a pure 
geometric and intrinsic point of view. However, this intrinsic point
of view is not as natural as it may look at first glance since it links 
tightly the internal forces (given by the term $\partial_x H_0$) and 
the external forces (given by $u$) to the same metric structure. This 
is quite questionable since the internal (resp. the external) forces 
proceed from intrinsic (resp. extrinsic) phenomenons.
\end{Remark}

We advocated in the previous subsection to \emph{link the choice of
the metric on the control $u$ to the noise model on the observation data}. 
We derived the particular case of the
white noise situation, which produces the standard Euclidean flat metric on the control. However, more general situations could be of some interest leading to non-standard metrics on
$u$.

\subsection{A non standard metric}
Assume that there exist a Hilbert space $W$ and a smooth mapping $\psi:\mathbb{R}^{nd}\to W^*$
such that the data term in (\ref{eq:23.1}) is replaced by
$$\frac{\lambda}{2}\int_0^1 |\psi(x^D_t)-\psi(x_t)|_{W^*}^2dt\,,$$
where $|\ |_{W^*}$ is the dual norm on the dual space $W^*$.
This situation is quite natural if we consider that the true observed quantity
is not $x^D$ but an element $\psi(x^D)\in W^*$. This setting is of
particular importance in the situation of shape modelling where the
individual label of the different landmarks cannot be observed and the
point-wise correspondence between the template and the observation is problematic. The so-called measure framework developed in \cite{glaunes2004diffeomorphic} makes intensive use
 of such a data term where $\psi(x)=\frac{1}{n}\sum_{i=1}^n
 \delta_{x_i}$ is the 
empirical distribution of the landmarks and $W$ is a proper
Reproducible Kernel 
Hilbert Space.

In this new case, the associated optimal trajectories are solutions of the Hamiltonian flow~:
\begin{equation}
  \label{eq:26.1}
\left\{
  \begin{array}[h]{l}
      \dot{x}_t=\frac{\partial H_0}{\partial p}(p_t,x_t)\\
\\
\dot{p}_t=-\frac{\partial H_0}{\partial x}(p_t,x_t)+\lambda (\psi'(x_t))^\dagger(\psi(x^D_
t)-\psi(x_t))
  \end{array}
\right.
\end{equation}
where $\psi'(x_t)^\dagger:W^*\to\mathbb{R}^{nd}$ is the Hilbertian
adjoint of the differential $\psi'(x_t):\mathbb{R}^{nd}\to W^*$ of
$\psi$ at $x$. As above, denoting $u_t=\lambda
(\psi'(x_t))^\dagger(\psi(x^D_t)-\psi(x_t))$ and assuming that
$\psi'(x_t)^\dagger\psi'(x_t):\mathbb{R}^{nd}\to \mathbb{R}^{nd}$ is
invertible (i.e. $\psi'(x_t)$ is one to one
) along the trajectory $x_t$, we get
\begin{eqnarray*}
  \label{eq:25.2b}
  \frac{\lambda}{2}\int_0^1 |\psi(x^D_t)-\psi(x_t)|_{W^*}^2dt &\geq&
  \frac{\lambda}{2}\int_0^1
  |\pi_{x_t}(\psi(x^D_t)-\psi(x_t))|_{W^*}^2dt\\ &=&
\frac{1}{2\lambda}\int_0^1\langle[\psi'(x_t)^\dagger\psi'(x_t)]^{-1}u_t,u_t\rangle dt\,,
\end{eqnarray*}
where $\pi_x$ is the orthogonal projection in $W^*$ on
$\text{Im}(\psi'(x))$ which is the tangent space at $\psi(x)$ of the
sub-manifold of $W^*$ defined by the immersion $\psi$.
In particular, we can use as local metric
$K_x=[\psi'(x)^\dagger\psi'(x)]^{-1}$ which is now position
dependent and defines a new Riemannian metric on the Landmark space.

In the mentioned situation of measure representation of landmarks we have the following proposition~:
\begin{prop}\label{prop:10.3.1}
  Assume that the kernel $K_W$ associated with the RKHS $W$ is $C^2$. Then $\psi:\mathbb{R}^{nd}\to W^*$ defined by $\psi(x)=\frac{1}{n}\sum_{i=1}^n \delta_{x_i}$ is differentiable and $[\psi'(x)^\dagger\psi'(x)]$ is the block diagonal matrix~:
$$[\psi'(x)^\dagger\psi'(x)]=\frac{1}{n^2}(\frac{\partial^2K_W}{\partial x_{i}\partial x_{j}}(x_i,x_j))
_{1\leq i,j\leq n}\,.$$
\end{prop}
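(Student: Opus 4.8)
First I would reduce everything to the reproducing property of $K_W$ together with the identification, through the Riesz isometry $R_W:W^*\to W$, of $W^*$ with a Hilbert space: recall that $\langle\mu,\nu\rangle_{W^*}=\langle R_W\mu,R_W\nu\rangle_W$ and that, by the very definition of the reproducing kernel, $R_W\delta_z=K_W(\cdot,z)$ for every $z\in\mathbb{R}^d$. Once this dictionary is set up, the proof splits into a differentiability statement and a reproducing-property computation.

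\textbf{Step 1 (differentiability of $\psi$).} Since $\psi(x)=\frac1n\sum_i\delta_{x_i}$ and the $i$-th summand depends only on $x_i$, it suffices to show that $z\mapsto\delta_z$ is (continuously) differentiable from $\mathbb{R}^d$ into $W^*$. The candidate partial derivatives are the functionals $\partial_a\delta_z:f\mapsto\partial_af(z)$, $1\le a\le d$. Using $K_W\in C^2$, the standard Taylor-expansion argument shows that the difference quotients $t^{-1}\big(K_W(\cdot,z+te_a)-K_W(\cdot,z)\big)$ form a Cauchy family in $W$ as $t\to0$ (their pairwise squared $W$-norms are finite combinations of values of $K_W$ near $(z,z)$, converging to $\partial^2_{z_a w_a}K_W(z,z)$), hence converge in $W$ to an element I denote $\partial_{z_a}K_W(\cdot,z)$; differentiating the reproducing identity $f(z)=\langle f,K_W(\cdot,z)\rangle_W$ then gives $\partial_af(z)=\langle f,\partial_{z_a}K_W(\cdot,z)\rangle_W$, so indeed $\partial_a\delta_z\in W^*$ with $R_W(\partial_a\delta_z)=\partial_{z_a}K_W(\cdot,z)$. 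Continuity of the second derivatives of $K_W$ makes $z\mapsto\partial_a\delta_z$ continuous into $W^*$, and the fundamental theorem of calculus in the Banach space $W^*$ yields that $z\mapsto\delta_z$ is $C^1$ with differential $v\mapsto\sum_av_a\,\partial_a\delta_z$. Hence $\psi$ is $C^1$ and $\psi'(x)h=\frac1n\sum_i\sum_ah_{i,a}\,\partial_a\delta_{x_i}$ for $h=(h_{i,a})\in\mathbb{R}^{nd}$.

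\textbf{Step 2 (the Gram operator).} Differentiating the reproducing identity once more, this time in the second slot, I obtain
\[
\langle\partial_a\delta_{x_i},\partial_b\delta_{x_j}\rangle_{W^*}=\big\langle\partial_{z_a}K_W(\cdot,x_i),\partial_{z_b}K_W(\cdot,x_j)\big\rangle_W=\frac{\partial^2K_W}{\partial z_a\,\partial w_b}(x_i,x_j),
\]
where $z,w$ denote the first and second arguments of $K_W$ and the order of differentiation is irrelevant by $C^2$-smoothness. Summing, for any $h,h'\in\mathbb{R}^{nd}$,
\[
\langle\psi'(x)^\dagger\psi'(x)h,h'\rangle=\langle\psi'(x)h,\psi'(x)h'\rangle_{W^*}=\frac1{n^2}\sum_{i,j}\sum_{a,b}h_{i,a}h'_{j,b}\,\frac{\partial^2K_W}{\partial z_a\,\partial w_b}(x_i,x_j),
\]
which is precisely the bilinear form attached to the matrix $\frac1{n^2}\big(\tfrac{\partial^2K_W}{\partial x_i\,\partial x_j}(x_i,x_j)\big)_{1\le i,j\le n}$, the $(i,j)$ block being the $d\times d$ matrix obtained by differentiating $K_W$ once in the first slot and once in the second and evaluating at $(x_i,x_j)$. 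Since $\psi'(x)^\dagger\psi'(x)$ is by definition the symmetric operator representing this form, the asserted identity follows.

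The one genuinely delicate point is Step 1: turning the hypothesis $K_W\in C^2$ into honest $C^1$-differentiability of the $W^*$-valued map $z\mapsto\delta_z$, that is, proving that the difference quotients of $K_W(\cdot,z)$ converge \emph{in the norm of $W$} (not merely pointwise) and that the limit depends continuously on $z$. That is exactly where the second-order regularity is consumed; once it is in hand, Step 2 is bookkeeping with the reproducing property. I would also record that each diagonal block $\frac1{n^2}\frac{\partial^2K_W}{\partial x_i\,\partial x_i}(x_i,x_i)$ is automatically symmetric positive semidefinite, being the Gram matrix of $(\partial_a\delta_{x_i})_{1\le a\le d}$ in $W^*$, which is what makes $K_x=[\psi'(x)^\dagger\psi'(x)]^{-1}$ a reasonable candidate for a local metric when $\psi'(x)$ is one-to-one.
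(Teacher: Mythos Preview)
Your proof is correct and follows essentially the same route as the paper's: both establish convergence of the difference quotients of $z\mapsto\delta_z$ in $W^*$ via a Cauchy argument that reduces, through the reproducing identity $\langle\delta_{z_1},\delta_{z_2}\rangle_{W^*}=K_W(z_1,z_2)$, to the $C^2$ regularity of $K_W$, and then read off the Gram matrix of $\psi'(x)$ from $\langle\delta'_{x_i,h_i},\delta'_{x_j,h_j}\rangle_{W^*}=\partial_{12}K_W(x_i,x_j)\cdot h_i\otimes h_j$. The only cosmetic differences are that you phrase things through the Riesz isometry $R_W\delta_z=K_W(\cdot,z)$ and work coordinate-by-coordinate, whereas the paper stays in $W^*$ and uses general directional increments $\Delta_{z,h,\epsilon}=\epsilon^{-1}(\delta_{z+\epsilon h}-\delta_z)$ together with a double-integral representation of their inner products.
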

\begin{proof}
  See appendix.
\end{proof}
\section{Existence results and Euler-Lagrange equation for shape
  spline estimation}
\label{sec:Existence}
In this section we provide several rigorous results concerning the shape
spline framework. Indeed, although the overall picture is formally quite
clear, it is necessary to provide rigorous statements about the main
objects we have just introduced. In particular, conditions for the
existence of solutions, global in time, of the perturbed evolution 
(\ref{eq:25.1}) need to be specified, as well as an existence theorem
for the growth estimation minimization problem~(\ref{eq:25.2}). On a more 
practical side, the minimization problem will be solved by gradient descent,
for which we will compute the directional derivatives 
of functional $J$ and the associated Euler-Lagrange equations.

\subsection{Existence of controlled evolution and weak dependency in  the
control variable}
 First define the function 
$f(q,u)\doteq (\partial_pH_0(x,p),-\partial_xH_0(x,p)+u)^T$ for $q=(x,p)\in
\mathbb{R}^{nd}\times\mathbb{R}^{nd}$ and $u\in
\mathbb{R}^{nd}$. We will consider the 
following hypothesis on $V$ and its kernel $K_V$:
\begin{description}
\item[$\mathrm{H0}$ ] \emph{ $V$ is continuously embedded in
   $C^1_0(\mathbb{R}^d,\mathbb{R}^d)$ and its kernel $K_V$ is
   $C^2$ in each of its variable.}
\end{description}
Following is the first result on the existence in time of a solution
to the perturbed evolution (\ref{eq:25.1}).
\begin{prop}[Existence of solutions, global in time, of the controlled system] Assume ($\mathrm{H0}$). Then for any $u\in
  L^2([0,T],\mathbb{R}^{nd})$ and for any initial condition
  $q_0=(x_0,p_0)\in\mathbb{R}^{nd}\times\mathbb{R}^{nd}$, there exists
  $q=(x,p)\in C([0,T],\mathbb{R}^{nd}\times\mathbb{R}^{nd})$ such that 
  \begin{equation}
    \label{eq:23.2.1}
\left\{
    \begin{array}[h]{l}
      x_t=x_0+\int_0^t \partial_pH_0(x_s,p_s)ds\\
      p_t=p_0+\int_0^t(-\partial_xH_0(x_s,p_s)+u_s)ds\,.
    \end{array}\right.
  \end{equation}
Moreover, there exists $C_1,C_2,C_3>0$ independent of $u$ and $q_0$ such that
for any $t\leq T$ we have
\begin{enumerate}
\item $H_0(x_t,p_t)\leq \tilde{H}_T\doteq C_1(H_0(x_0,p_0)+T\int_0^T |u_s|^2ds)$,
\item $|x_t|\leq |x_0|+C_2T\tilde{H}_T^{1/2}$,
\item $|p_t|\leq (|p_0|+\int_0^T|u_s|ds)\exp(C_3T\tilde{H}_T^{1/2})$.
\end{enumerate}
\label{Thm:23.2.1}
\end{prop}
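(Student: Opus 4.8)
The plan is to produce a local solution of the integral system \eqref{eq:23.2.1} by Carath\'eodory's theorem, and then to establish a priori bounds strong enough both to prevent finite-time blow-up (hence to give a solution on all of $[0,T]$) and to yield directly the estimates (1)--(3). Under (H0) the kernel $K_V$ and its first derivatives are uniformly bounded on $\mathbb{R}^d\times\mathbb{R}^d$ (a consequence of the continuous embedding \eqref{C1inj} together with $K_V\in C^2$), so $H_0$ is $C^2$, the map $\partial_p H_0(x,p)$ is linear in $p$ with coefficients bounded and Lipschitz in $x$, and $\partial_x H_0(x,p)$ is quadratic in $p$ with coefficients bounded and Lipschitz in $x$. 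Consequently, for $u\in L^2([0,T],\mathbb{R}^{nd})$, the map $(t,q)\mapsto f(q,u_t)$ is measurable in $t$, continuous and locally Lipschitz in $q$, and dominated on bounded $q$-sets by an $L^1$ function of $t$ (since $|u|\in L^2([0,T])\subset L^1([0,T])$); Carath\'eodory's existence theorem gives a maximal absolutely continuous solution $q=(x,p)$ of \eqref{eq:23.2.1} on an interval $[0,T^*)$ with $T^*\le T$ (unique, by the local Lipschitz bound).

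Next comes the energy estimate. Set $v_t\doteq\sum_i K_V(\cdot,x_{i,t})p_{i,t}\in V$ and $h(t)\doteq H_0(x_t,p_t)$. The RKHS structure gives $|v_t|_V^2=2h(t)$, $\partial_{p_i}H_0(x_t,p_t)=v_t(x_{i,t})$, and, using the symmetry of $K_V$, $\partial_{x_i}H_0(x_t,p_t)=(Dv_t(x_{i,t}))^{T}p_{i,t}$; combined with the admissibility inequality \eqref{C1inj} these yield
\[
|\partial_p H_0(x_t,p_t)|\le \sqrt n\,|v_t|_\infty\le \sqrt n\,C|v_t|_V=\sqrt{2n}\,C\,h(t)^{1/2},\qquad
|\partial_x H_0(x_t,p_t)|\le |v_t|_{1,\infty}\,|p_t|\le \sqrt2\,C\,h(t)^{1/2}|p_t|.
\]
The Hamiltonian cancellation along \eqref{eq:23.2.1} gives $\frac{d}{dt}h(t)=\langle\partial_p H_0(x_t,p_t),u_t\rangle$ a.e., hence $\frac{d}{dt}h\le \sqrt{2n}\,C\,h^{1/2}|u_t|$. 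Working with $\sqrt{h+\ve}$ to avoid the singularity at $h=0$, dividing by $2(h+\ve)^{1/2}$, integrating, letting $\ve\to0$, and applying Cauchy--Schwarz to $\int_0^t|u_s|\,ds$ yields $h(t)^{1/2}\le h(0)^{1/2}+\frac{\sqrt{2n}}{2}C\,T^{1/2}\bigl(\int_0^T|u_s|^2ds\bigr)^{1/2}$; squaring gives (1) with $\tilde H_T\doteq C_1\bigl(H_0(x_0,p_0)+T\int_0^T|u_s|^2ds\bigr)$, where $C_1$ depends only on $n$ and $C$.

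The pointwise bounds and globality follow. From $|\dot x_t|=|\partial_p H_0(x_t,p_t)|\le \sqrt{2n}\,C\,h(t)^{1/2}\le \sqrt{2n}\,C\,\tilde H_T^{1/2}$ and integration we get (2) with $C_2=\sqrt{2n}\,C$. For (3), $|\dot p_t|\le|\partial_x H_0(x_t,p_t)|+|u_t|\le \sqrt2\,C\,\tilde H_T^{1/2}|p_t|+|u_t|$, so $|p_t|\le |p_0|+\int_0^T|u_s|ds+\sqrt2\,C\,\tilde H_T^{1/2}\int_0^t|p_s|ds$, and Gr\"onwall's inequality gives (3) with $C_3=\sqrt2\,C$; all these constants are plainly independent of $u$ and $q_0$. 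Finally, (2)--(3) show that $q_t$ remains in a fixed compact set on $[0,T^*)$ and that $\dot q_t=f(q_t,u_t)\in L^1([0,T^*),\mathbb{R}^{2nd})$, so $q_t$ has a limit as $t\uparrow T^*$; if $T^*<T$ one restarts the Carath\'eodory solution from this limit, contradicting maximality, hence $T^*=T$ and $q\in C([0,T],\mathbb{R}^{nd}\times\mathbb{R}^{nd})$.

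The main obstacle is the energy estimate, and more precisely identifying the right RKHS identities expressing $\partial_p H_0$ and $\partial_x H_0$ through the reconstructed velocity field $v_t$ and its Jacobian, so that the admissibility hypothesis \eqref{C1inj} converts \emph{bounded Hamiltonian} into genuine control of the drift $f$ (note that $\partial_x H_0$ is quadratic in $p$, so the naive bound is too weak and one really needs the factor $h^{1/2}$). Once the differential inequality $\dot h\lesssim h^{1/2}|u|$ is in hand, the rest is routine integration and Gr\"onwall, the only points requiring care being the regularisation at zeros of $h$ and the verification that every constant is independent of $u$ and $q_0$.
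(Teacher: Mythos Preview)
Your proof is correct and follows essentially the same route as the paper's: local existence, the RKHS identities $\partial_{p_i}H_0=v(x_i)$ and $\partial_{x_i}H_0=(Dv(x_i))^Tp_i$ together with the embedding \eqref{C1inj} to get $|\partial_pH_0|\lesssim H_0^{1/2}$ and $|\partial_xH_0|\lesssim H_0^{1/2}|p|$, then an energy estimate for $H_0$, the bound on $x$, and Gronwall for $p$. The only cosmetic difference is in the energy step: the paper writes $H_t\le H_0(x_0,p_0)+C(\max_{s\le t}H_s)^{1/2}\int_0^t|u_s|\,ds$ and solves this implicit inequality for the maximum, whereas you integrate the differential inequality $\dot h\lesssim h^{1/2}|u_t|$ after the $\sqrt{h+\ve}$ regularisation; both yield the same $\tilde H_T$ up to the value of $C_1$. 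Your explicit appeal to Carath\'eodory is in fact a slight improvement in precision, since $u$ is only $L^2$.
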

\begin{proof}
Under ($\mathrm{H0}$), we have existence and uniqueness
locally in time of the solution to the system (\ref{eq:23.2.1}) for any
initial condition $q_0=(x_0,p_0)$. The only point to be checked is
that the solution does not go to infinity in finite time. Let $t_0<T$ be such
that we have a solution $q_t=(x_t,p_t)$ defined on $[0,t_0[$. From
(\ref{eq:23.2.1}), we get that
$H_t\doteq H_0(x_t,p_t)=H_0(x_0,p_0)+\int_0^t\langle \partial_pH_0(x_s,p_s),u_s\rangle
ds$. 
From (\ref{C1inj}), since $\partial_{p_i}H_0(x_s,p_s)=v(x_i)$ for
$v(.)=\sum_{j=1}^n K_V(.,x_j)p_j$, we get $|\partial_pH_0(x_s,p_s)|\leq C
H_0(x_s,p_s)^{1/2}$ for a universal constant $C$ and we deduce that
$H_t\leq H_0(x_0,p_0)+C(\max_{s\in [0,t]}H_s)^{1/2}\int_0^{t}|u_s|ds$. Hence 
\begin{equation}
\max_{s\in [0,t]}H_s\leq 2H_0(x_0,p_0)+4C^2(\int_0^t|u_s|ds)^2\leq
2H_0(x_0,p_0)+4C^2T\int_0^T|u_s|^2ds\leq \tilde{H}_T
\label{23.2.4}
\end{equation}
for $C_1=\max(2,4C^2)$. The upper bound does not depend on $t_0$ and 
in particular the Hamiltonian can not explode in finite time. It is
sufficient now to prove that $x_t$ and $p_t$ stay also bounded. Indeed, we
have 
\begin{equation}
|x_t|\leq |x_0|+|\int_0^t\partial_pH_0(x_s,p_s)ds|\leq |x_0|+C\int_0^T
H_s^{1/2}ds\leq |x_0|+CT\tilde{H}_T^{1/2}\,.\label{23.2.5}
\end{equation}
Moreover, $|p_t|\leq \int_0^t |\partial_xH_0(x_s,p_s)|_\infty
|p_s|ds+ \int_0^t|u_s|ds+|p_0|$ and since again $|\partial_{x_i}H_0(x,p)|=|dv^*(x_i)p_i
|$ for $v(.)=\sum_{j=1}^n K_V(.,x_j)p_j$, we get from (\ref{C1inj})
that there exists $C'>0$ such that $|p_t|\leq
C'\int_0^tH_s^{1/2}|p_s|ds+ \int_0^t|u_s|ds+|p_0|$. Using Gronwall's
Lemma, we get eventually
\begin{equation}
 |p_t|\leq (|p_0|+\int_0^T|u_s|ds)\exp(C'\int_0^tH_s^{1/2}ds)\leq (|p_0|+\int_0^T|u_s|ds)\exp(C'T\tilde{H}_T^{1/2})\,.\label{23.2.6}
\end{equation}
From (\ref{23.2.4}), (\ref{23.2.5}) and (\ref{23.2.6}) we deduce that
$x_t$ and $p_t$
stay uniformly bounded on $[0,t_0[$. Since $t_0<T$ is arbitrary, the
system (\ref{eq:23.2.1}) admits a solution on $[0,T]$ satisfying
points 1), 2) and 3) of Proposition \ref{Thm:23.2.1}.
\end{proof}
\begin{prop}[Dependence in $u$]
  \label{prop:24.2.1}
 Assume ($\mathrm{H0}$). For any $q_0\in\mathbb{R}^{nd}\times \mathbb{R}^{nd}$
and any $u\in  L^2([0,T],\mathbb{R}^{nd})$, let $q^{u,q_0}\in
    C([0,T],\mathbb{R}^{nd}\times \mathbb{R}^{nd})$ denote the
    solution of (\ref{eq:23.2.1}) with initial condition $q_0$. Then
    the mapping $(q_0,u)\to q^{u,q_0}$ is continuous for the weak
    topology on $L^2([0,T],\mathbb{R}^{nd})$ and uniform
    convergence on $C([0,T],\mathbb{R}^{nd}\times \mathbb{R}^{nd})$.
\end{prop}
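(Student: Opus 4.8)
The plan is to run a standard compactness argument on top of the a priori estimates just proved in Proposition~\ref{Thm:23.2.1}. Fix a sequence $(q_0^n,u^n)$ converging to $(q_0,u)$, i.e.\ $q_0^n\to q_0$ in $\mathbb{R}^{nd}\times\mathbb{R}^{nd}$ and $u^n\rightharpoonup u$ weakly in $L^2([0,T],\mathbb{R}^{nd})$, and set $q^n=(x^n,p^n)\doteq q^{u^n,q_0^n}$. Since a weakly convergent sequence is bounded, $\sup_n\|u^n\|_{L^2}<\infty$, and $\sup_n H_0(q_0^n)<\infty$ by continuity of $H_0$; so points 1)--3) of Proposition~\ref{Thm:23.2.1} give a uniform bound on $q^n$ in $C([0,T],\mathbb{R}^{nd}\times\mathbb{R}^{nd})$, with constants independent of $n$.

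Next I would prove equicontinuity. From the estimate $|\partial_pH_0(x^n_s,p^n_s)|\leq C\,H_0(x^n_s,p^n_s)^{1/2}$ used in the proof of Proposition~\ref{Thm:23.2.1}, together with the uniform bound on $H_0$ along the trajectories, the derivatives $\dot x^n$ are uniformly bounded, so $(x^n)$ is uniformly Lipschitz. Writing $p^n_t=p_0^n-\int_0^t\partial_xH_0(x^n_s,p^n_s)\,ds+\int_0^t u^n_s\,ds$, the first integral is again uniformly Lipschitz by the analogous bound on $\partial_xH_0$, while Cauchy--Schwarz gives $|\int_s^t u^n_r\,dr|\leq|t-s|^{1/2}\|u^n\|_{L^2}$, so the control term is uniformly $1/2$-Hölder. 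Hence $(q^n)$ is uniformly bounded and equicontinuous and, by Arzel\`a--Ascoli, every subsequence admits a further subsequence converging uniformly on $[0,T]$ to some $q=(x,p)\in C([0,T],\mathbb{R}^{nd}\times\mathbb{R}^{nd})$.

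It then remains to identify the limit. Under $(\mathrm{H0})$ the kernel $K_V$ is $C^2$, so $\partial_pH_0$ and $\partial_xH_0$ are continuous, and uniform convergence of $(x^n,p^n)$ along the extracted subsequence gives uniform convergence of $\partial_pH_0(x^n_\cdot,p^n_\cdot)$ and $\partial_xH_0(x^n_\cdot,p^n_\cdot)$; therefore the corresponding time integrals pass to the limit. For the control term, testing the weak convergence $u^n\rightharpoonup u$ against $\mathbf{1}_{[0,t]}\in L^2([0,T],\mathbb{R}^{nd})$ yields $\int_0^t u^n_s\,ds\to\int_0^t u_s\,ds$ for every $t\in[0,T]$. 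Letting $n\to\infty$ in (\ref{eq:23.2.1}) and using $q_0^n\to q_0$ shows that $(x,p)$ solves (\ref{eq:23.2.1}) with control $u$ and initial condition $q_0$; by the uniqueness statement of Proposition~\ref{Thm:23.2.1} this forces $q=q^{u,q_0}$. Since the limit does not depend on the chosen subsequence, the whole sequence $q^n$ converges uniformly to $q^{u,q_0}$, which is the asserted continuity.

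The only place weak convergence (as opposed to strong $L^2$ convergence) is genuinely used is the passage to the limit in $\int_0^t u^n_s\,ds$, and this works precisely because the test function $\mathbf{1}_{[0,t]}$ lies in $L^2$; everything else is turned into a routine compactness argument by the a priori bounds of Proposition~\ref{Thm:23.2.1}. I do not anticipate a serious obstacle beyond this bookkeeping, the mild subtlety being to check that the $1/2$-Hölder control inherited from the $L^2$-bound on $u^n$ suffices to apply Arzel\`a--Ascoli to the momentum component $p^n$.
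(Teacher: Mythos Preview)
Your argument is correct and complete. The paper, however, takes a more direct route: rather than applying Arzel\`a--Ascoli to the full trajectories $q^n$ and then invoking uniqueness to identify the limit, it uses the uniform bound from Proposition~\ref{Thm:23.2.1} to obtain a Lipschitz constant $K_M$ for $f$ on the relevant compact set, and then compares $q^n$ to $q^\infty\doteq q^{u,q_0}$ \emph{directly} via the integral equation and Gronwall's lemma, arriving at the quantitative estimate
\[
|q^n_t-q^\infty_t|\leq\Big(|q_0^n-q_0|+\sup_{r\leq T}\Big|\int_0^r(u^n_s-u_s)\,ds\Big|\Big)e^{K_MT}\,.
\]
Ascoli is then invoked only on the primitives $r\mapsto\int_0^r(u^n_s-u_s)\,ds$, to upgrade the pointwise convergence (from weak convergence, as you also note) to uniform convergence in $r$, using the same $1/2$-H\"older bound $|\int_r^{r'}(u^n_s-u_s)\,ds|\leq 2R\sqrt{r'-r}$ that you use for equicontinuity of $p^n$. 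This avoids the subsequence extraction and the appeal to uniqueness (which, incidentally, is asserted in the \emph{proof} of Proposition~\ref{Thm:23.2.1} rather than in its statement), and yields an explicit stability bound as a by-product. Your compactness-plus-identification scheme is a perfectly valid alternative and is in a sense more robust---it would go through with mere continuity of $\partial H_0$---but here the local Lipschitz structure available under~($\mathrm{H0}$) makes the direct Gronwall comparison shorter.
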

\begin{proof}
Let $u^n\rightharpoonup u^\infty$ be a weakly converging sequence in
$L^2$ and $q^n_0\to q^\infty_0$ be a converging sequence of initial conditions. There exists $R>0$ such that $\sup_{n\geq 0}|u^n|_2\leq R$ and
by Proposition \ref{Thm:23.2.1}, if $q^n$ (resp. $q^\infty$) denotes
the solution of (\ref{eq:23.2.1}) for $(q_0,u)=(q^n_0,u^n)$ (resp. $(q_0,u)=(q^\infty_0,u^\infty)$), 
then there exists $M>0$ such that $|q^n_t|\leq M$ for $(t,n)\in [0,T]\times
\mathbb{N}\cup\{\infty\}$. Let $K_M>0$ such that 
$$|f(q,u)-f(q',u')|\leq K_M|q-q'|+|u-u'|$$
for $|q|,|q'|\leq M$ and $u,u'\in \mathbb{R}^{nd}$ (such $K_M$ exists
since ($\mathrm{H0}$) implies that $dH_0$ is $C^1$). We have 
\begin{eqnarray*}
|q^n_t-q^\infty_t| & =
&|\int_0^tf(q^n_s,u^n_s)-f(q^\infty_s,u^\infty_s)ds+(q^n_0-q^\infty_0)|\\
&\leq&
K_M\int_0^t|q^n_s-q^\infty_s|ds+|\int_0^t(u^n_s-u^\infty_s)ds|+|q^n_0-q^\infty_0|\label{23.2.16}
\end{eqnarray*}
so that using Gronwall's Lemma we get 
\begin{equation}
  \label{23.2.15}
  |q^n_t-q^\infty_t|\leq (|q^n_0-q^\infty_0|+\sup_{r\leq T}|\int_0^r(u^n_s-u^\infty_s)ds|)\exp(K_MT)\,.
\end{equation}
Since $u\to \int_0^ru_sds$ is a continuous linear mapping, we get from
the weak convergence that $|\int_0^r(u^n_s-u^\infty_s)ds|\to 0$ as
$n\to\infty$. Noticing that $|\int_r^{r'}(u^n_s-u^\infty_s)ds|\leq
2\sqrt{r'-r}R$ for any $r<r'\leq T$, Ascoli's Theorem turns the previous
simple convergence into  uniform convergence in $r$.
\end{proof}
\begin{Thm}[Existence of a minimizer in the inexact case]
  Assume ($\mathrm{H0}$) and that $q\to{\mathcal K}_q\in{\mathcal M}_d(\mathbb{R})$
  is $C^0$ with $\langle{\mathcal K_q}u,u\rangle\geq c|u|^2$ for some
  fixed $c>0$ and that $C(q,u)=\frac{1}{2}\langle \mathcal{K}_qu,u\rangle$. 
  Assume that $q_0\to E(q_0)$ is a non negative and
  lower semi-continuous function such that $E\to +\infty$ when $|q_0|\to\infty$, that $g_k$ is a continuous non
  negative function for any $1\leq k\leq M$ and let
  $x_0\in\mathbb{R}^{nd}\times\mathbb{R}^{nd}$ be a fixed initial
  condition. Then the function 
$$J(q_0,u)\doteq E(q_0)+\int_0^T C(q^{q_0,u}_t,u_{t})dt
+\sum_{k=1}^Mg_k(q_{t_k}^{q_0,u})$$ 
defined for any $(q_0,u)\in \mathbb{R}^{nd}\times L^2([0,T],\mathbb{R}^{nd})$ (where $q^{q_0,u}$ is the
solution of (\ref{eq:23.2.1}) with initial condition $q_0$)
reaches its minimum.
\label{Thm:24.2.2}
\end{Thm}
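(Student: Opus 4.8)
The plan is to apply the direct method of the calculus of variations, the decisive ingredient being the weak-to-uniform continuity of the control-to-trajectory map established in Proposition~\ref{prop:24.2.1}.

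First I would take a minimizing sequence $(q_0^n,u^n)_{n\geq1}$ for $J$, so that $J(q_0^n,u^n)\to\inf J$; this infimum is non negative and finite since $0\leq\inf J\leq J(q_0,0)<+\infty$ for any admissible $q_0$. As $E$, $C$ and the $g_k$ are all non negative, each summand of $J(q_0^n,u^n)$ is bounded uniformly in $n$. From $\sup_nE(q_0^n)<+\infty$ and the coercivity of $E$, the sequence $(q_0^n)$ is bounded; from the uniform ellipticity $\langle\mathcal{K}_qu,u\rangle\geq c|u|^2$ one gets $\tfrac{c}{2}\int_0^T|u^n_t|^2\,dt\leq\int_0^TC(q^{q_0^n,u^n}_t,u^n_t)\,dt\leq\sup_mJ(q_0^m,u^m)$, so $(u^n)$ is bounded in $L^2([0,T],\mathbb{R}^{nd})$. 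Passing to a subsequence (not relabelled), I may assume $q_0^n\to q_0^\infty$ and $u^n\rightharpoonup u^\infty$ weakly in $L^2([0,T],\mathbb{R}^{nd})$.

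By Proposition~\ref{prop:24.2.1}, the trajectories $q^n\doteq q^{q_0^n,u^n}$ then converge to $q^\infty\doteq q^{q_0^\infty,u^\infty}$ uniformly on $[0,T]$. Two of the three terms of $J$ behave well under this convergence: $E(q_0^\infty)\leq\liminf_nE(q_0^n)$ by lower semicontinuity of $E$, and for each $k$, $q^n_{t_k}\to q^\infty_{t_k}$ together with continuity of $g_k$ gives $g_k(q^n_{t_k})\to g_k(q^\infty_{t_k})$. The remaining point, which is the crux, is the lower semicontinuity of the running cost,
\begin{equation}
\int_0^TC(q^\infty_t,u^\infty_t)\,dt\ \leq\ \liminf_{n\to\infty}\int_0^TC(q^n_t,u^n_t)\,dt\,.
\label{eq:lsc-running-cost}
\end{equation}

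The hard part is \eqref{eq:lsc-running-cost}, because both the weight $\mathcal{K}_{q^n_t}$ and the control $u^n_t$ vary with $n$; I would decouple the two. Since the $q^n$ remain in a fixed compact subset of $\mathbb{R}^{nd}\times\mathbb{R}^{nd}$ (a uniformly convergent sequence of continuous functions on $[0,T]$ is uniformly bounded), on which $q\mapsto\mathcal{K}_q$ is uniformly continuous, one has $\varepsilon_n\doteq\sup_{t\in[0,T]}\|\mathcal{K}_{q^n_t}-\mathcal{K}_{q^\infty_t}\|\to0$, hence
$$\Big|\int_0^TC(q^n_t,u^n_t)\,dt-\tfrac12\int_0^T\langle\mathcal{K}_{q^\infty_t}u^n_t,u^n_t\rangle\,dt\Big|\ \leq\ \tfrac{\varepsilon_n}{2}\int_0^T|u^n_t|^2\,dt\ \leq\ \tfrac{\varepsilon_n}{2c}\sup_mJ(q_0^m,u^m)\ \longrightarrow\ 0\,.$$
It therefore suffices to show that $\Phi(u)\doteq\tfrac12\int_0^T\langle\mathcal{K}_{q^\infty_t}u_t,u_t\rangle\,dt$ is weakly lower semicontinuous on $L^2([0,T],\mathbb{R}^{nd})$. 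But $\Phi$ is convex (each $\mathcal{K}_{q^\infty_t}$ is symmetric and positive semidefinite) and norm-continuous on $L^2$, since $0\leq\Phi(u)\leq\tfrac12\big(\sup_{t\in[0,T]}\|\mathcal{K}_{q^\infty_t}\|\big)\|u\|_{L^2}^2$ with the supremum finite; and a convex, strongly continuous functional on a Hilbert space is weakly lower semicontinuous (by Mazur's lemma). Hence $\Phi(u^\infty)\leq\liminf_n\Phi(u^n)$, which combined with the displayed estimate gives \eqref{eq:lsc-running-cost}. Adding the three contributions, $J(q_0^\infty,u^\infty)\leq\liminf_nJ(q_0^n,u^n)=\inf J$, so $(q_0^\infty,u^\infty)$ attains the minimum. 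The only genuinely delicate step is \eqref{eq:lsc-running-cost}; the rest is a routine application of Propositions~\ref{Thm:23.2.1} and~\ref{prop:24.2.1}.
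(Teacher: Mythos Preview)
Your proof is correct and follows essentially the same route as the paper's: direct method, compactness from the coercivity of $E$ and the ellipticity of $\mathcal{K}_q$, then Proposition~\ref{prop:24.2.1} to freeze the weight $\mathcal{K}_{q_t}$ before passing to the liminf in the running cost. The only cosmetic difference is that the paper establishes the weak lower semicontinuity of $\Phi(u)=\tfrac12\int_0^T\langle\mathcal{K}_{q^\infty_t}u_t,u_t\rangle\,dt$ via the explicit quadratic expansion $\langle Ku^n,u^n\rangle\geq\langle Ku^\infty,u^\infty\rangle+2\langle Ku^\infty,u^n-u^\infty\rangle$ (the cross term vanishing by weak convergence), whereas you invoke convexity and Mazur's lemma.
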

\begin{proof}
Let $(q^n_0,u^n)_{n\geq 0}$ be a minimizing sequence for $J$. Since the $g_k$'s are non
negative and $E\geq 0$ with $E\to +\infty$ when $|q_0|\to +\infty$, we
get that $\int_0^T \langle{\mathcal
  K}_{q^n_s}u^n_s,u^n_s\rangle ds$ and $|q^n_0|$ are upper bounded. Since we assume
that $\langle{\mathcal K_q}u,u\rangle\geq c|u|^2$, we deduce that
$u^n$ is a bounded sequence in $L^2$ and we get by weak compactness
of the strong balls that (up to the extraction of a sub-sequence) there 
exists $u^\infty$ such that $u^n\rightharpoonup u^\infty$. Again, up
to the extraction of a subsequence, we can assume that there exists
$q^\infty_0$ such that $q^n_0\to q^\infty_0$. Since
\begin{eqnarray*}
\int_0^T\langle {\mathcal K}_{q^n_s}u^n_s,u^n_s\rangle ds &\geq& 
\int_0^T\langle {\mathcal K}_{q^n_s}-{\mathcal K}_{q^\infty_s})u^n_s,u^n_s\rangle ds+
\int_0^T\langle {\mathcal K}_{q^\infty_s}u^\infty_s,u^\infty_s\rangle ds\\
&+&2\int_0^T\langle {\mathcal K}_{q^\infty_s}u^\infty_s,u^n_s-u^\infty_s\rangle ds\,,
\end{eqnarray*}
we deduce from Proposition \ref{prop:24.2.1} that 
$$\sup_{s\leq T}\|{\mathcal K}_{q^n_s}-{\mathcal K}_{q^\infty_s}\|\to
0\text{ and }
\int_0^T\langle ({\mathcal K}_{q^n_s}-{\mathcal K}_{q^\infty_s})u^n_s
,u^n_s\rangle ds\to 0\,.$$ Moreover, by weak convergence 
$\int_0^T\langle {\mathcal K}_{q^\infty_s}u^\infty_s,u^n_s-u^\infty_s\rangle ds\to 0$ 
so that 
$\int_0^T C(q^\infty_s,u^\infty_s)ds\leq 
\liminf \int_0^T C(q^n_s,u^n_s)ds$ and by continuity of the $g_k$'s
and of $E$, 
$J(q^\infty_0,u^\infty)\leq \lim J(q^n_0,u^n)=\inf J$.
\end{proof}
  Obviously, this proof gives also the existence of a minimizer in 
the \emph{exact} case where the spline is constrained to go through a sequence 
$(x^D_{t_k})_{1\leq k\leq M}$ of landmarks configurations if there exists at least 
one such controlled path with finite cost. 
\subsection{Directional derivatives and  Euler-Lagrange Equation} \label{AdjointLagrange}
\begin{prop}
  Assume ($\mathrm{H0}$) and let $u,\delta u\in L^2([0,T],\mathbb{R}^{nd})$ and
  $q_0,\delta q_0\in \mathbb{R}^{nd}\times \mathbb{R}^{nd}$. For
  any $\epsilon>0$, we denote $q_{t,\epsilon}\in
  C([0,T],\mathbb{R}^{nd}\times \mathbb{R}^{nd})$, the solution of
  (\ref{eq:23.2.1}) for the control $u_{t,\epsilon}\doteq
  u_t+\epsilon\delta u_t$ and the initial condition $q_{0,\epsilon}=
  q_0+\epsilon \delta q_0$. Then we have
\begin{equation}
\lim_{\stackrel{\epsilon\neq 0}{\epsilon\to 0}}\sup_{t\leq
  T}|\frac{q_{t,\epsilon}-q_t}{\epsilon}-\delta q_t|=0\label{23.2.12}
\end{equation}
where $\delta q$ is the absolutely continuous solution on $[0,T]$ of the 
linearized system
\begin{equation}
  \label{23.2.7}
  \dot{\delta q_t}=\partial_q f(q_t,u_t)\delta q_t+\partial_u f(q_t,u_t)\delta u_t
\end{equation}
with initial condition $\delta q_0$.
\label{Thm:23.2.2}
\end{prop}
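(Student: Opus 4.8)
The plan is to prove the differentiability of the flow with respect to the control and initial condition by a standard Gronwall-type estimate, carefully tracking the dependence on $\epsilon$. First I would fix $u,\delta u\in L^2$ and $q_0,\delta q_0$, and restrict attention to $\epsilon$ in a bounded interval, say $|\epsilon|\leq 1$. By Proposition \ref{Thm:23.2.1}, the a priori bounds (points 1--3) applied with control $u_{t,\epsilon}=u_t+\epsilon\delta u_t$ and initial condition $q_{0,\epsilon}$ give a uniform bound $|q_{t,\epsilon}|\leq M$ and $|q_t|\leq M$ for all $t\leq T$ and $|\epsilon|\leq 1$, since $|u_{t,\epsilon}|_2$ and $|q_{0,\epsilon}|$ stay bounded. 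On the compact region $\{|q|\leq M\}$, hypothesis ($\mathrm{H0}$) ensures that $f$ is $C^1$ in $q$ with locally Lipschitz derivative; this gives a Lipschitz constant $K_M$ for $f(\cdot,u)$ and a modulus of continuity for $\partial_q f$ on this region, which is what will be needed to control the remainder in the Taylor expansion.

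Next I would establish existence and uniqueness of the linearized equation \eqref{23.2.7}: since $t\mapsto q_t$ is continuous and bounded, $\partial_q f(q_t,u_t)$ is a bounded measurable (indeed continuous in $q_t$, measurable in $u_t$) matrix-valued function and $\partial_u f(q_t,u_t)\delta u_t=(0,\delta u_t)^T$ lies in $L^2$, so \eqref{23.2.7} is a linear ODE with $L^1$ (in fact $L^2$) coefficients and admits a unique absolutely continuous solution $\delta q$ on $[0,T]$ with the prescribed initial condition, again by Gronwall. Then I would set $r_{t,\epsilon}\doteq \frac{q_{t,\epsilon}-q_t}{\epsilon}-\delta q_t$ and write, using the integral forms of both equations,
\begin{equation*}
r_{t,\epsilon}=\int_0^t\Bigl(\frac{f(q_{s,\epsilon},u_{s,\epsilon})-f(q_s,u_s)}{\epsilon}-\partial_q f(q_s,u_s)\delta q_s-\partial_u f(q_s,u_s)\delta u_s\Bigr)ds\,.
\end{equation*}
Splitting $f(q_{s,\epsilon},u_{s,\epsilon})-f(q_s,u_s)$ as the difference in the $q$-variable plus the (linear, hence exact) difference in the $u$-variable, and using the first-order Taylor expansion of $f(\cdot,u_s)$ with integral remainder, the integrand becomes $\partial_q f(q_s,u_s)r_{s,\epsilon}$ plus a term bounded by $\frac{1}{\epsilon}\,|q_{s,\epsilon}-q_s|\cdot\omega_M(|q_{s,\epsilon}-q_s|)$, where $\omega_M$ is the modulus of continuity of $\partial_q f$ on $\{|q|\leq M\}$. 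Since $|q_{s,\epsilon}-q_s|\leq C_M|\epsilon|$ uniformly in $s$ (another Gronwall estimate comparing the two flows, as in Proposition \ref{prop:24.2.1}), this error term is bounded by $C_M\,\omega_M(C_M|\epsilon|)\to 0$ as $\epsilon\to 0$, uniformly in $s\leq T$.

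Finally, applying Gronwall's Lemma to $|r_{t,\epsilon}|\leq K_M\int_0^t|r_{s,\epsilon}|ds + T\,C_M\,\omega_M(C_M|\epsilon|)$ yields $\sup_{t\leq T}|r_{t,\epsilon}|\leq T\,C_M\,\omega_M(C_M|\epsilon|)\,e^{K_M T}\to 0$, which is exactly \eqref{23.2.12}. The main obstacle is the bookkeeping around the uniformity: one must make sure all the constants ($M$, $K_M$, $\omega_M$, $C_M$) are chosen independent of $\epsilon$ on the interval $|\epsilon|\leq 1$, which relies essentially on the $u$-independent a priori bounds of Proposition \ref{Thm:23.2.1} and on the fact that the $u$-dependence of $f$ is affine, so no remainder arises from perturbing $u$. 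A minor subtlety worth a line is that $\partial_q f(q_s,u_s)$ need not be bounded uniformly in $s$ a priori unless one has already restricted to $\{|q|\leq M\}$, so the ordering — first the a priori bounds, then the Lipschitz/modulus estimates on the resulting compact set — matters.
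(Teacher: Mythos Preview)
Your proof is correct and follows essentially the same route as the paper: obtain a uniform bound $M$ from Proposition~\ref{Thm:23.2.1}, establish the first-order estimate $|q_{s,\epsilon}-q_s|\leq C_M|\epsilon|$, write the remainder $r_{t,\epsilon}$ in integral form, split off the $\partial_q f(q_s,u_s)r_{s,\epsilon}$ term and show the residual error goes to zero, then conclude by Gronwall. The only noteworthy difference is that the paper controls the residual $\eta_{s,\epsilon}$ by dominated convergence (pointwise convergence plus a uniform bound), whereas you push a bit further and extract uniform convergence via the modulus of continuity of $\partial_q f$ on $\{|q|\leq M\}$; either works, and your version even gives an explicit rate. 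One small wording issue: under $(\mathrm{H0})$ the kernel is only assumed $C^2$, so $\partial_q f$ is continuous but not necessarily locally Lipschitz---you should drop the phrase ``locally Lipschitz derivative'' and keep only the modulus of continuity, which is all you actually use.
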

\begin{proof}
  Assume $0<\epsilon\leq 1$ so that $\sup_{0<\epsilon\leq 1}\int_0^T|u_{s,\epsilon}|^2dt\leq
  2\int_0^T|u_{s}|^2+|\delta u_s|^2ds<\infty$. As we did in the proof of
  Proposition \ref{prop:24.2.1}
 we deduce from  Proposition
  \ref{Thm:23.2.1} that $q_{t,\epsilon}\doteq (x_{t,\epsilon},p_{t,\epsilon})$
  is uniformly bounded by some $M>0$ for $(t,\epsilon)\in [0,T]\times
  ]0,1]$. Again, if $K_M>0$ is such that 
$$|f(q,u)-f(q',u')|\leq K_M|q-q'|+|u-u'|$$
for $|q|,|q'|\leq M$ and $u,u'\in \mathbb{R}^{nd}$ then $|q_{t,\epsilon}-q_t|\leq
K_M\int_0^t |q_{s,\epsilon}-q_s|ds+\epsilon(\delta q_0+\int_0^t|\delta u_s|ds)$
and by Gronwall's Lemma
\begin{equation}
  \label{23.2.8}
  |q_{t,\epsilon}-q_t|\leq\epsilon \exp(K_MT)(\delta q_0+\int_0^T |\delta u|_sds)\,.
\end{equation}
Since from ($\mathrm{H0}$), $f$ is $C^1$ and $\partial_qf(q,u)$ and
$\partial_uf(q,u)$ are uniformly bounded for $|q|\leq M$, there exists
a unique solution $\delta q$ of (\ref{23.2.7}) which is absolutely
continuous. Moreover,
\begin{align}
  \label{23.2.9}
  \begin{split}
    \big|\frac{q_{t,\epsilon}-q_t}{\epsilon} & -\delta q_t\big| \\
     & \leq
    \int_0^t\left|\frac{f(q_{s,\epsilon},u_{s,\epsilon})-f(q_s,u_s)}{\epsilon}-\partial_q
      f(q_s,u_s)\delta q_s+\partial_u f(q_s,u_s)\delta u_s\right|ds
\end{split}\\
    &\leq  \int_0^t \left|\partial_q
      f(q_s,u_s)(\frac{q_{s,\epsilon}-q_s}{\epsilon}-\delta q_s
      )\right|ds +\int_0^t\eta_{s,\epsilon}ds
\end{align}
where
\begin{eqnarray*}
\eta_{s,\epsilon}&\doteq &\left|\frac{f(q_{s,\epsilon},u_{s,\epsilon})-f(q_s,u_s)}{\epsilon}-\partial_q
    f(q_s,u_s)\frac{q_{s,\epsilon}-q_s}{\epsilon}+\partial_u
    f(q_s,u_s)\delta u_s\right|\\
&=&\left|\frac{f(q_{s,\epsilon},u_{s})-f(q_s,u_s)}{\epsilon}-\partial_q
    f(q_s,u_s)\frac{q_{s,\epsilon}-q_s}{\epsilon}\right|\,.
\end{eqnarray*}
However, from (\ref{23.2.8}) and the fact that $\partial_qf$ is
uniformly bounded for $|q|\leq M$, we get that $\eta_{s,\epsilon}$ is
uniformly bounded on $[0,T]\times ]0,1]$. Since $\eta_{s,\epsilon}\to
0$ for $\epsilon\to 0$, we get by Lebesgue's Dominated Convergence Theorem that $\int_0^T
\eta_{s,\epsilon}ds\to 0$. Using (\ref{23.2.9}) and Gronwall's Lemma,
we get
$$\left|\frac{q_{t,\epsilon}-q_t}{\epsilon}-\delta q_t\right|\leq
(\int_0^T\eta_{s,\epsilon}ds)\exp(\int_0^T \partial_qf(q_s,u_s)ds)\to 0\,.$$
\end{proof}
\begin{Thm}[Directional derivative]
  Assume ($\mathrm{H0}$), assume that $q\to{\mathcal K}_q\in{\mathcal M}_d(\mathbb{R})$
  is $C^1$, that $g_k$
  is $C^1$ for any $1\leq k\leq M$ and that $E$ is $C^1$. Let $C(q,u)=\frac{1}{2}\langle \mathcal{K}_qu,u\rangle$ and $T>t_M$. Then for any $u,\delta u\in
  L^2([0,T],\mathbb{R}^{nd})$ and $q_0,\delta
  q_0\in\mathbb{R}^{nd}\times \mathbb{R}^{nd}$, if 
$$J(\epsilon)\doteq E(q_{0,\epsilon})+\int_0^T C(q_{t,\epsilon},u_{t,\epsilon})dt
+\sum_{k=1}^Mg_k(q_{t_k,\epsilon})$$ we have
\begin{align}
\begin{split}
\lim_{\epsilon\to
  0}&\frac{J(\epsilon) -J(0)}{\epsilon} \\ 
& =\ \langle \nabla E,\delta q_0\rangle+\int_0^T\left(\partial_qC(q_s,u_s)\delta
q_s+\partial_uC(q_s,u_s)\delta u_s\right)ds+\sum_{k=1}^M \langle \nabla g_k(q_{t_k}),\delta
q_{t_k}\rangle
\end{split}
  \label{23.2.10a}\\
& =\ \langle \nabla E(q_0)+P_0,\delta q_0\rangle+\int_0^T\langle
\nabla_uC(q_s,u_s)+\partial_uf^T(q_s,u_s)P_s,\delta u_s\rangle ds\label{23.2.10}
\end{align}
where $C(q,u)\doteq \langle {\mathcal K}_qu,u\rangle$, $\delta q_t$ is
solution of (\ref{23.2.7}) and $P_t$ is of bounded variations with
$P_T=0$ and
\begin{equation}
dP_t=-\partial_qf(q_t,u_t)^TP_tdt-\sum_{k=1}^M \nabla
  g_k(q_{t_k})\otimes \delta_{t_k}\label{23.2.14}
\end{equation}
\label{Thm:23.2.4}
where $v\otimes \delta_x$ denotes a vectorial Dirac measure at
location $x$ with value $v$.
\end{Thm}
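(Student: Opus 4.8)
The plan is to compute the derivative in two stages: first obtain the ``primal'' formula \eqref{23.2.10a} by direct differentiation under the integral sign, then introduce an adjoint variable $P$ to rewrite it as \eqref{23.2.10}. For the first stage, I would start from Proposition \ref{Thm:23.2.2}, which gives $\sup_{t\leq T}|\frac{q_{t,\epsilon}-q_t}{\epsilon}-\delta q_t|\to 0$, and combine it with the uniform bound $|q_{t,\epsilon}|\leq M$ on $[0,T]\times]0,1]$ coming from Proposition \ref{Thm:23.2.1}. Since $E$, $g_k$ and $C$ are all $C^1$ (using the hypothesis that $q\mapsto\mathcal{K}_q$ is $C^1$ and $C(q,u)=\frac12\langle\mathcal{K}_qu,u\rangle$), the composite functions $\epsilon\mapsto E(q_{0,\epsilon})$, $\epsilon\mapsto g_k(q_{t_k,\epsilon})$ differentiate directly by the chain rule, yielding the $\langle\nabla E,\delta q_0\rangle$ and $\langle\nabla g_k(q_{t_k}),\delta q_{t_k}\rangle$ terms. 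For the integral term $\int_0^T C(q_{t,\epsilon},u_{t,\epsilon})\,dt$, I would write the difference quotient as $\int_0^T\frac{C(q_{t,\epsilon},u_{t,\epsilon})-C(q_t,u_t)}{\epsilon}\,dt$, expand $C$ to first order with a controlled remainder (quadratic in $u$, so the remainder involves $|\delta u_t|^2$ and $|\frac{q_{t,\epsilon}-q_t}{\epsilon}-\delta q_t|$ times bounded quantities), and pass to the limit by Lebesgue dominated convergence — the dominating function being integrable because $\delta u\in L^2$ and everything else is uniformly bounded. This produces \eqref{23.2.10a}.

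For the second stage I introduce $P$ as the solution, backward in time, of the adjoint equation \eqref{23.2.14} with $P_T=0$: it is a measure-driven linear ODE, so $P$ is of bounded variation with jumps $-\nabla g_k(q_{t_k})$ at the times $t_k$, and existence/uniqueness is standard (the coefficient $\partial_qf(q_t,u_t)^T$ is bounded and continuous on $[0,T]$ since $f$ is $C^1$ and $q$ is bounded). The key computation is to examine $d\langle P_t,\delta q_t\rangle$. Using \eqref{23.2.7} for $\delta q$ and \eqref{23.2.14} for $P$, the $\partial_qf$ terms cancel and one gets, in the sense of distributions,
\[
d\langle P_t,\delta q_t\rangle=\langle P_t,\partial_uf(q_t,u_t)\delta u_t\rangle\,dt-\sum_{k=1}^M\langle\nabla g_k(q_{t_k}),\delta q_{t_k}\rangle\,\delta_{t_k}(dt)\,,
\]
where the jump term requires a word of care because $\delta q$ is continuous at $t_k$ so evaluating the Dirac against it is unambiguous. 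Integrating from $0$ to $T$ and using $P_T=0$ gives
\[
-\langle P_0,\delta q_0\rangle=\int_0^T\langle\partial_uf(q_s,u_s)^TP_s,\delta u_s\rangle\,ds-\sum_{k=1}^M\langle\nabla g_k(q_{t_k}),\delta q_{t_k}\rangle\,.
\]
Substituting this identity into \eqref{23.2.10a} eliminates the $\sum_k\langle\nabla g_k,\delta q_{t_k}\rangle$ term and the implicit dependence of $\delta q$ on $\delta u$, leaving exactly \eqref{23.2.10}.

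The main obstacle I anticipate is the bookkeeping around the measure term in the adjoint equation: one must justify the integration-by-parts formula $d\langle P_t,\delta q_t\rangle$ when $P$ has jumps and $\delta q$ is merely absolutely continuous, making sure the atoms of $dP$ are paired with the (continuous, hence well-defined) values $\delta q_{t_k}$ and that no spurious ``jump times of $\delta q$'' contribute. A clean way is to treat $[0,T]$ as split at the $t_k$'s, apply the classical product rule on each subinterval where $P$ is absolutely continuous, and collect the jump contributions $\langle P_{t_k^+}-P_{t_k^-},\delta q_{t_k}\rangle=-\langle\nabla g_k(q_{t_k}),\delta q_{t_k}\rangle$ at the interfaces. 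Everything else — the passage to the limit for the primal derivative, the chain rule applications — is routine given Propositions \ref{Thm:23.2.1} and \ref{Thm:23.2.2}.
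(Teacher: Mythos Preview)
Your first stage matches the paper's argument closely: uniform bounds from Proposition~\ref{Thm:23.2.1}, the linearization from Proposition~\ref{Thm:23.2.2}, chain rule for $E$ and the $g_k$, and dominated convergence for the integral term.

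For the second stage you take a different route. The paper does not compute $d\langle P_t,\delta q_t\rangle$; instead it introduces the resolvent $M_{t,s}$ of the linearized flow, writes $\delta q_s=\int_0^s M_{t,s}\,\partial_uf(q_t,u_t)\delta u_t\,dt+M_{0,s}\delta q_0$, substitutes this into \eqref{23.2.10a}, swaps the order of integration, and reads off an explicit formula
\[
P_t=\int_t^T M_{t,s}^T\nabla_qC(q_s,u_s)\,ds+\sum_{k} M_{t,t_k}^T\nabla g_k(q_{t_k})\mathbf{1}_{t\le t_k}\,.
\]
Your integration-by-parts approach is the standard adjoint method and is in principle equivalent, with the advantage that the jump bookkeeping you outline is clean and avoids any explicit resolvent.

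There is, however, a genuine gap in your substitution step. The identity you derive,
\[
-\langle P_0,\delta q_0\rangle=\int_0^T\langle\partial_uf(q_s,u_s)^TP_s,\delta u_s\rangle\,ds-\sum_{k=1}^M\langle\nabla g_k(q_{t_k}),\delta q_{t_k}\rangle\,,
\]
does eliminate the $\sum_k$ term from \eqref{23.2.10a}, but it does \emph{not} touch the running term $\int_0^T\partial_qC(q_s,u_s)\delta q_s\,ds$, which still carries $\delta q$ and hence the implicit dependence on $\delta u$. For your cancellation to go through, the adjoint equation needs an additional source $-\nabla_qC(q_t,u_t)\,dt$:
\[
dP_t=-\partial_qf(q_t,u_t)^TP_t\,dt-\nabla_qC(q_t,u_t)\,dt-\sum_{k=1}^M\nabla g_k(q_{t_k})\otimes\delta_{t_k}\,.
\]
With this, $d\langle P_t,\delta q_t\rangle$ acquires the extra term $-\langle\nabla_qC(q_t,u_t),\delta q_t\rangle\,dt$ and the substitution into \eqref{23.2.10a} yields \eqref{23.2.10} exactly. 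The paper's explicit $P_t$ above in fact satisfies this corrected equation (differentiate it and the $\nabla_qC$ term appears from the boundary of the integral), not \eqref{23.2.14} as printed --- so the theorem statement carries a typo, and your taking \eqref{23.2.14} at face value is what produced the gap.
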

\begin{proof}
For any variation $\delta u$ of the control $u$, we get by Proposition
\ref{Thm:23.2.2} that $\epsilon\to q_{t,\epsilon}$ is differentiable 
and $\partial_\epsilon q_{|\epsilon=0}=\delta q$ where
  $\dot{\delta q}=\partial_qf(q_s,u_s)\delta q_s+\partial_u f(q_s,u_s)\delta
  u_s$. Moreover,
  \begin{align*}
    \label{23.2.11}
    \begin{split}
      A(\epsilon) &\doteq 
      \frac{J(\epsilon)-J(0)}{\epsilon}\\
&\quad -\left(\langle \nabla
        E(q_0),\delta q_0\rangle+\int_0^T(\partial_qC(q_s,u_s)\delta
        q_s +\partial_uC(q_s,u_s)\delta u_sds+\sum_{k=1}^M\langle
        \nabla g_k(q_{t_k}),\delta q_{t_k}\rangle\right)
    \end{split}
\\
& =  \int_0^T\partial_qC(q_s,u_s)\left(
  \frac{q_{s,\epsilon}-q_s}{\epsilon}-\delta
  q_s\right)ds\\
&\quad +\sum_{k=1}^M\langle \nabla
g_k(q_{t_k}),\frac{q_{t_k,\epsilon}-q_{t_k}}{\epsilon}-\delta
q_{t_k}\rangle+\zeta_{\epsilon}+\int_0^T \eta_{s,\epsilon}ds
  \end{align*}
where
$$\zeta_\epsilon=\sum_{k=1}^M\left(\frac{g_k(q_{t_k,\epsilon})-g_k(q_{t_k})}{\epsilon}-\langle
\nabla
g_k(q_{t_k}),\frac{q_{t_k,\epsilon}-q_{t_k}}{\epsilon}\rangle\right)+\frac{E(q_{0,\epsilon})-E(q_0)}{\epsilon}-\langle
\nabla E(q_0),\delta q_0\rangle$$
and
$$\eta_{s,\epsilon}
=\frac{C(q_{s,\epsilon},u_{s,\epsilon})-C(q_s,u_s)}{\epsilon}-\left(\partial_qC(q_s,u_s)\frac{q_{s,\epsilon}-q_s}{\epsilon}+\partial_uC(q_s,u_s)\delta
  u_s\right)\,.$$
From the fact that the $g_k$'s and $E$ are $C^1$ and (\ref{23.2.8}), we get
that $\zeta_\epsilon\to 0$ with $\epsilon\to 0$. Since $q_{s,\epsilon}$ is uniformly bounded for $(s,\epsilon)\in
[0,T]\times ]0,1]$ and $\mathcal K_q$ is $C^1$, one easily gets from
(\ref{23.2.8}) that there exist $a,b>0$ such that $|\eta_{s,\epsilon}|\leq
a+b(|u|_s^2+|\delta u_s|^2)$. Since $\eta_{s,\epsilon}\to 0$ as
$\epsilon\to 0$, we get by Lebesgue's Dominated Convergence Theorem that
$\int_0^T \eta_{s,\epsilon}ds\to 0$. Using (\ref{23.2.12}), we get
eventually $A(\epsilon)\to 0$ so that 
\begin{equation}
  \begin{split}
    \delta J\doteq \frac{d J}{d\epsilon}(0) & =\langle \nabla
    E(q_0),\delta q_0\rangle\\
&\quad +\int_0^T\left(\partial_qC(q_s,u_s)\delta
      q_s +\partial_uC(q_s,u_s)\delta u_s\right)ds+\sum_{k=1}^M \langle
    \nabla g_k(q_{t_k}),\delta q_{t_k}\rangle
  \end{split}
\label{23.2.13}
\end{equation}
and (\ref{23.2.10a}) is proved.
Introducing now $M_{t,s}$ the semi-group solution of $\partial_s
M_{t,s}=\partial_q f(q_s,u_s) M_{t,s}$ with $M_{t,t}=\text{Id}_{2nd}$ we
get
$$ \delta q_s=\int_0^s M_{t,s}\,\partial_uf(q_t,u_t) \delta u_t
dt+M_{0,s}\delta q_0\,.$$
and from (\ref{23.2.13})
\begin{equation}
  \label{eq:3}
  \begin{split}
    \delta J &=\langle \nabla E(q_0),\delta
    q_0\rangle+\int_0^T\langle\nabla_uC(q_t,u_t) ,\delta u_t\rangle dt\\
    &\quad +\int_0^T\langle\nabla_qC(q_s,u_s),\delta q_s\rangle ds
 + \sum_{k=1}^n \langle \nabla g_k(q_{t_k}),\delta q_{t_k}\rangle\,.
  \end{split}
\end{equation}
Thus we have
\begin{eqnarray*}
\delta J &=&\int_0^T\langle\nabla_uC(q_t,u_t)  + \partial_u
f(q_t,u_t)^T \int_t^T M_{t,s}^T\nabla_qC(q_s,u_s)\,ds\\
&&+ \sum_{k=1}^n \partial_u f(q_t,u_t)^T
{M_{t,t_k}}^T\nabla g_k(q_{t_k})\mathbf{1}_{t\leq t_k},\delta
u_t\rangle dt\\
&&+\langle \nabla E(q_0)
+\int_0^T
M_{0,s}^T\nabla_qC(q_s,u_s)\,ds+\sum_{k=1}^nM_{0,t_k}^T\nabla
g_k(q_{t_k})\mathbf{1}_{t\leq t_k},\delta q_0\rangle\\
& = & \int_0^T\langle\nabla_uC(q_t,u_t)  + \partial_u
f(q_t,u_t)^T P_t,\delta
u_t\rangle dt+\langle \nabla E(q_0)+P_0,\delta q_0\rangle\,.
\end{eqnarray*}
where
$$P_t\doteq \int_t^T
M_{t,s}^T\nabla_qC(q_s,u_s)\,ds+\sum_{k=1}^nM_{t,t_k}^T\nabla
g_k(q_{t_k})\mathbf{1}_{t\leq t_k}\,.$$
One easily sees that $P_t$ is absolutely continuous between the
observation times with jumps at the observation times. Moreover,
differentiating $t\to M_{t,t'}M_{t',t}=\text{Id}$, we get $\partial_t
M_{t,t'}=-M_{t,t'}\partial_qf(q_t,u_t)$ so that we get (\ref{23.2.14}). 
\end{proof}
  From Thm \ref{Thm:23.2.4}, we get immediately the Euler-Lagrange 
equations for shape splines evolution~: 
\begin{equation}
\left\{
  \begin{array}[h]{l}
  \label{eq:8.3.1}
\dot{q}_t = f(q_t,u_t)\\\\
  \mathcal{K}_{q_t}u_t+P^p_t
  =0\\\\
dP_t=-\partial_qf(q_t,u_t)^TP_tdt-\sum_{k=1}^M \nabla
  g_k(q_{t_k})\otimes \delta_{t_k}\\\\
\end{array}\right.
\end{equation}
where $P_t=
\begin{pmatrix}
  P^x_t & P^p_t
\end{pmatrix}^T
$ with boundary conditions
\begin{equation}
 P_T=0,\ \nabla E(q_0)+P_0=0\,. 
\end{equation}
\begin{rem}
\label{rem:9.3.1}
  Note that in our experiments, we will consider that $x_0$ is fixed
  and let $p_0$ be free (sometimes called \emph{natural spline} in the
  classical cubic splines framework). This corresponds to a flat prior
  on $p_0$ and gives the new boundary conditions
  \begin{equation}
    P_T=0,\ P^p_0=0\,. 
  \end{equation}
Moreover, the cost functions $g_k$ will usually not depend on $p$ so that 
$\partial_pg_k(q)=0$ and $P^p_t$ is absolutely continuous in time. In particular, 
we get in this case that for natural splines, $u_t$ is continuous and vanishes 
at the boundary of the interval $[0,T]$.
\end{rem}
\section{Numerical experiments}
\label{sec:numexp}
In this section we provide preliminary experiments illustrating the behavior 
of shape splines in simple 2D synthetic experiments. 
\begin{figure}[phbt]
  \centering
\begin{tabular}[h]{ll}
 \hspace{-15mm}  
\begin{minipage}[h]{0.45\linewidth}
    \includegraphics[width=8cm]{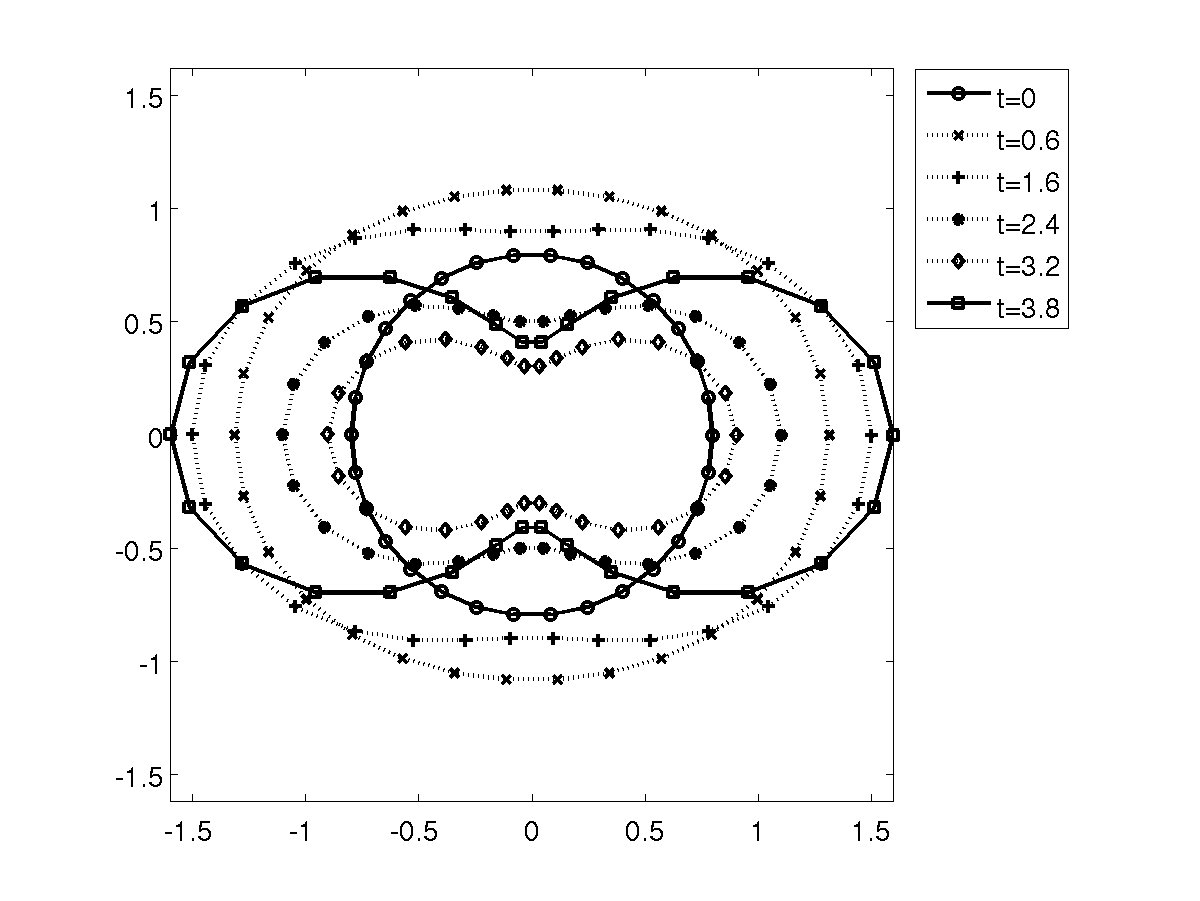}
    \end{minipage} &
    \begin{minipage}[h]{0.5\linewidth}
     \hspace{-0mm} \includegraphics[width=10cm]{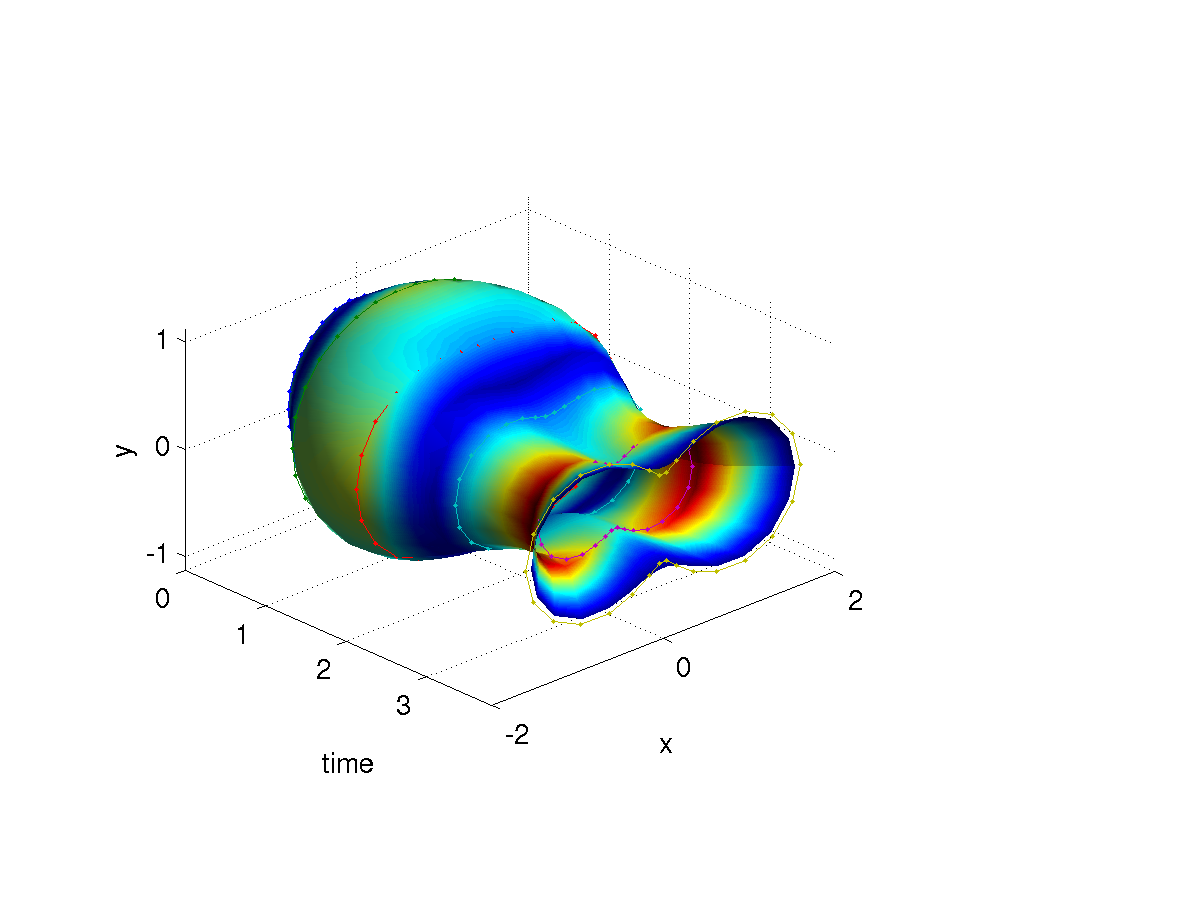}
    \end{minipage}
\end{tabular}
  \caption{Left hand side :2D plots of the sequence of sampled curves $c_k$ for $k=0,\cdots, 5$. The initial curve is a circle and the final shape is a horizontally pinched ellipse (plain curve). The intermediate curves  are generated by linear interpolation between the initial and final shapes. Note that no noise is added here and that the intermediate shapes are rescaled by a factor $r_k$ depending on $k$. Right hand side : estimated shape spline displayed in space-time representation. The color indicates the value of the norm of $u$ on the surface through time (blue for low values, red for high values)}
\label{fig:10.2.1}
\end{figure}
We start with a small family of observation times 
$0\leq t_1<\cdots <t_M\leq T$ and for each time, a landmark configuration 
$x^D_{t_k}\in\mathbb{R}^{2n}$ defined as a noisy version of regularly 
sampled curves $c_k$. To stay 
close to some realistic framework where the observation points are sparse, 
the number $M$ of time points is kept small to $M=5$ (with $t_5=3.8$). In these 
experiments, we focus on the simple Euclidean metric $|x_{t_k}^D-x_{t_k}|^2$ 
in the data term, which is in agreement with our Gaussian 
noise assumption.

In our first experiment, we consider 
the evolution of an initial 2D circular shape which somewhat linearly 
evolves into a pinched ellipso\"idale shape. The shapes are regularity 
sampled in a consistent way so that the problem of point correspondence 
between two time points does not need to be consider (see Fig 
\ref{fig:10.2.1}). To emphasize the space-time regularity provided by 
the spline shapes interpolation, we display the result as a surface 
in the 3D space-time. The first interesting point to note is that 
the shape spline provides an actual smooth interpolation of the evolution
in time between the observation epochs and extends to shape spaces the specific behavior of classical cubic splines. In particular, the shape spline actually joins the observed shapes with very good accuracy despite the fact that we are using
here inexact matching.  
\begin{figure}[h]
  \centering
  \begin{tabular}[h]{ccc}
    \hspace{-10mm}\begin{minipage}[h]{0.4\linewidth}
      \includegraphics[width=5.5cm]{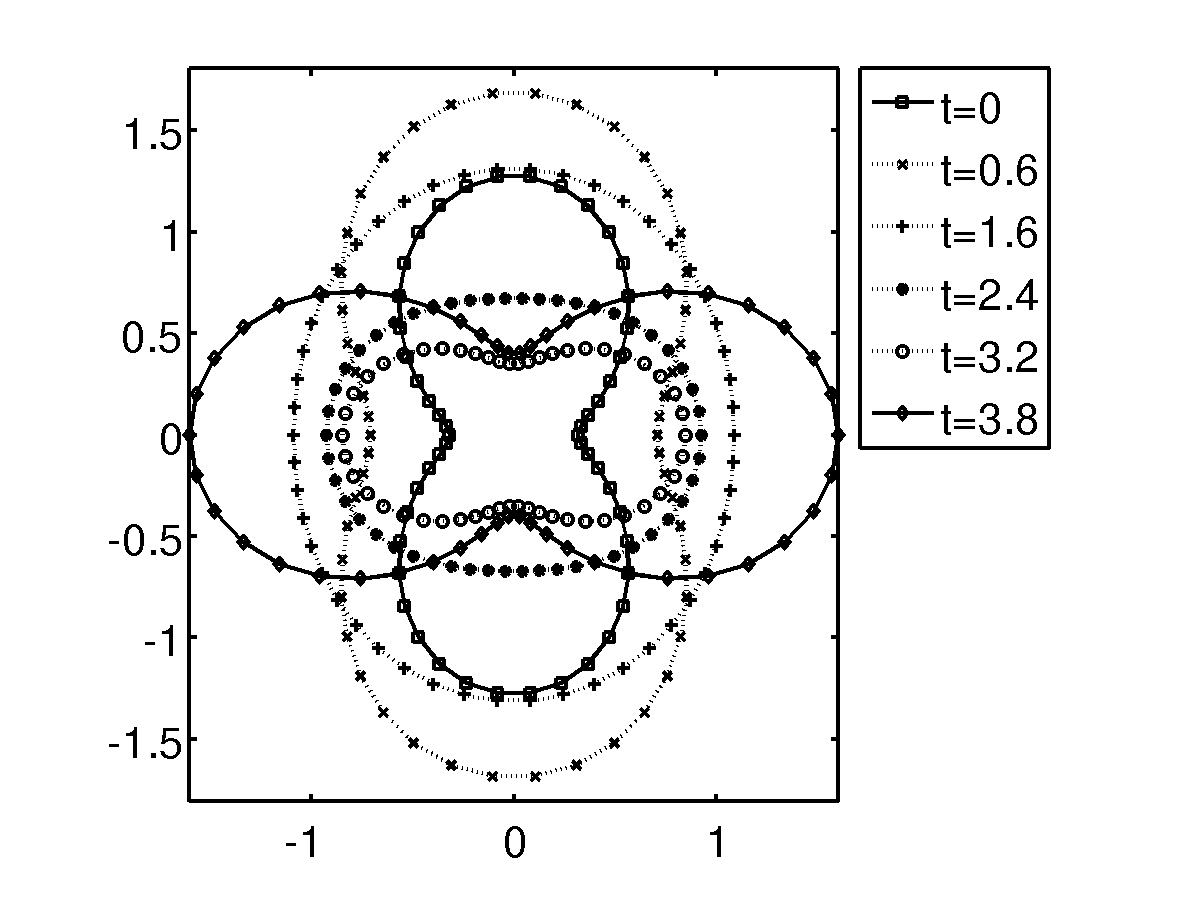}
    \end{minipage}
&
 \hspace{-7mm}  \begin{minipage}[h]{0.35\linewidth}
      \includegraphics[width=6cm]{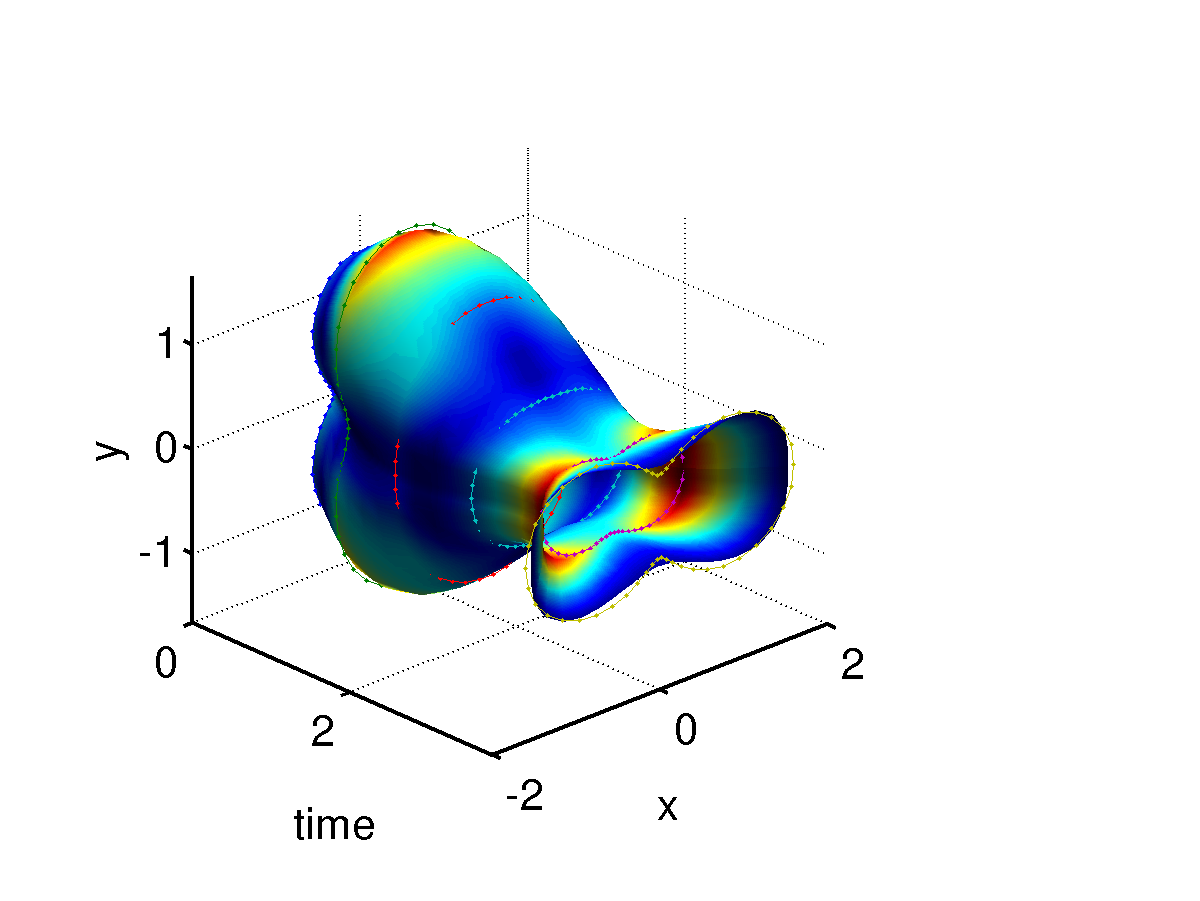}
    \end{minipage}

&
\begin{minipage}[h]{0.3\linewidth}
\includegraphics[width=6cm]{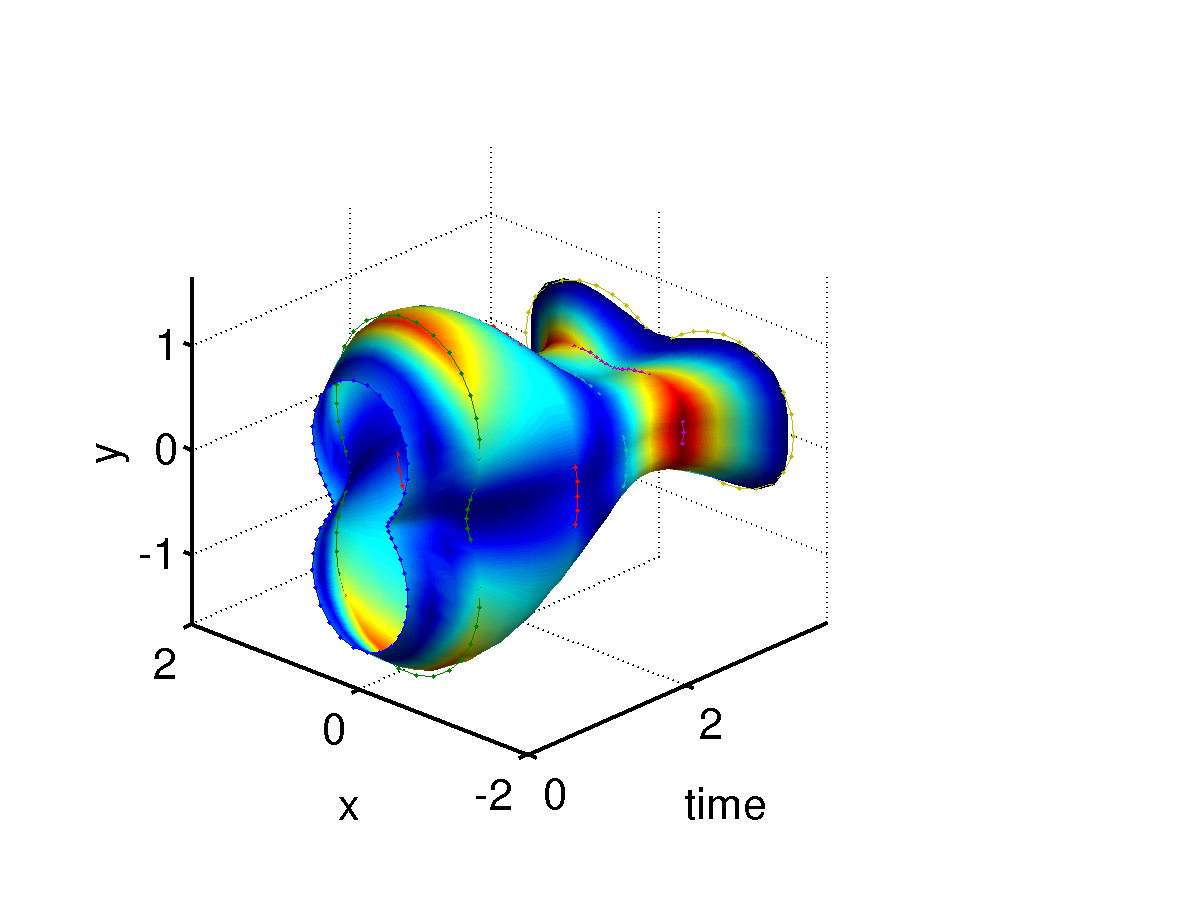}
\end{minipage}

\\ 
 \hspace{-10mm}\begin{minipage}[h]{0.4\linewidth}
   \includegraphics[width=5.5cm]{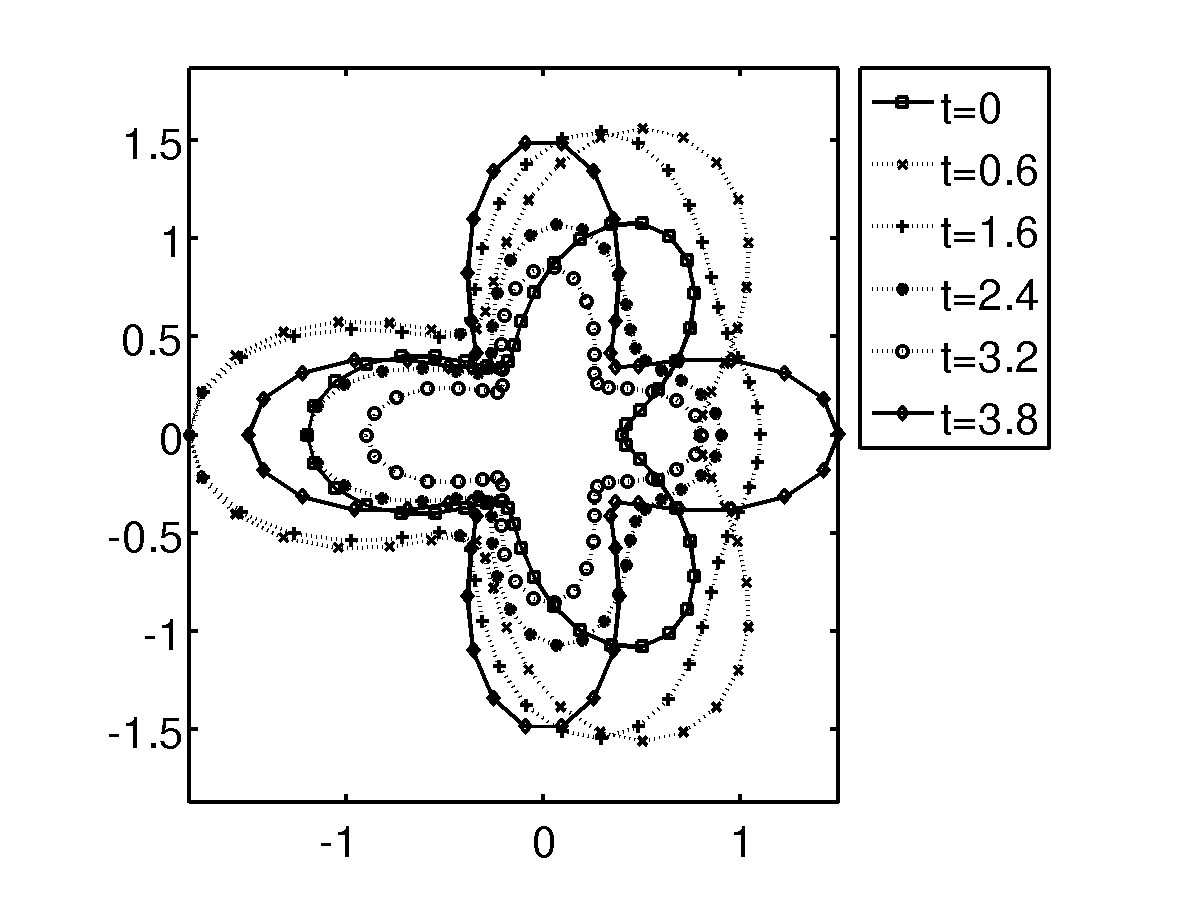}
\end{minipage}
&
 \hspace{-7mm}\begin{minipage}[h]{0.35\linewidth}
      \includegraphics[width=6cm]{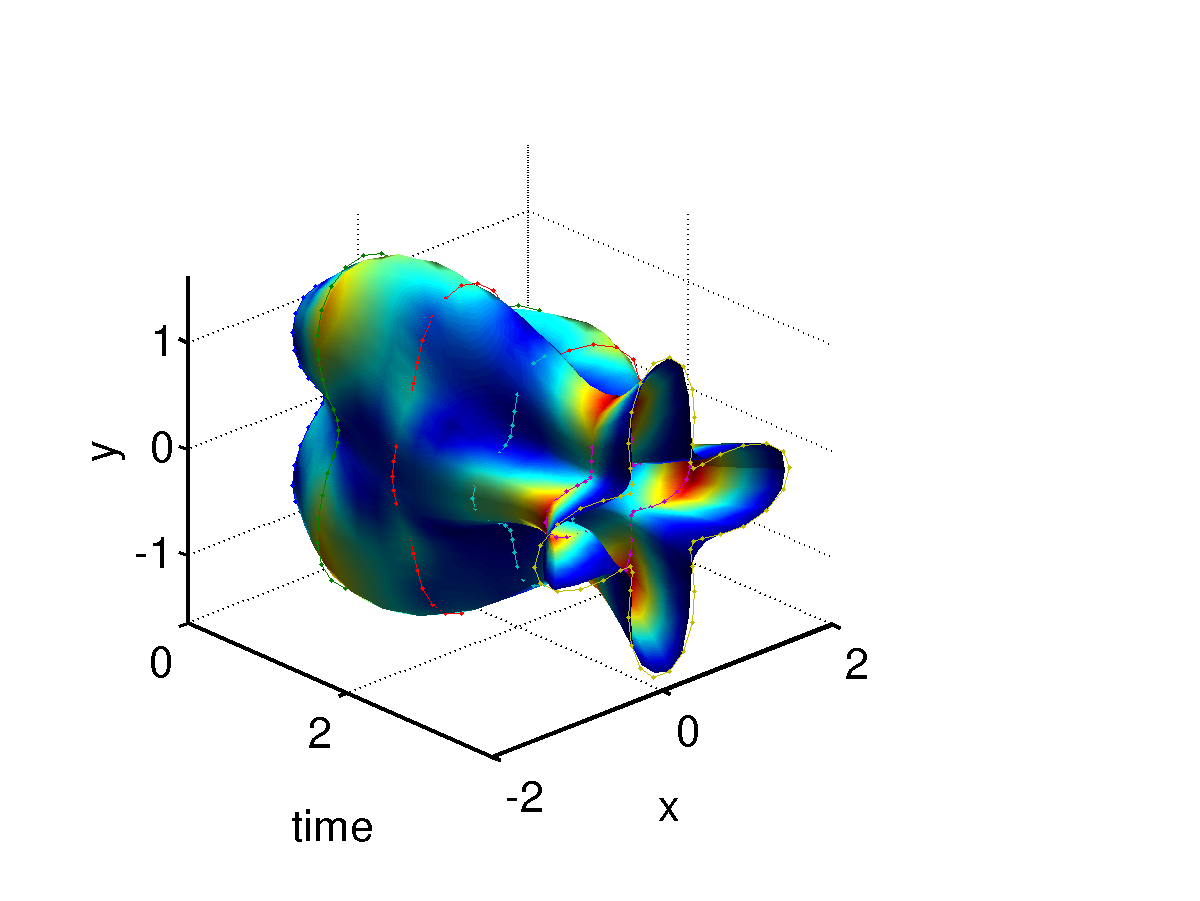}
    \end{minipage} 
&
\begin{minipage}[h]{0.3\linewidth}
\includegraphics[width=6cm]{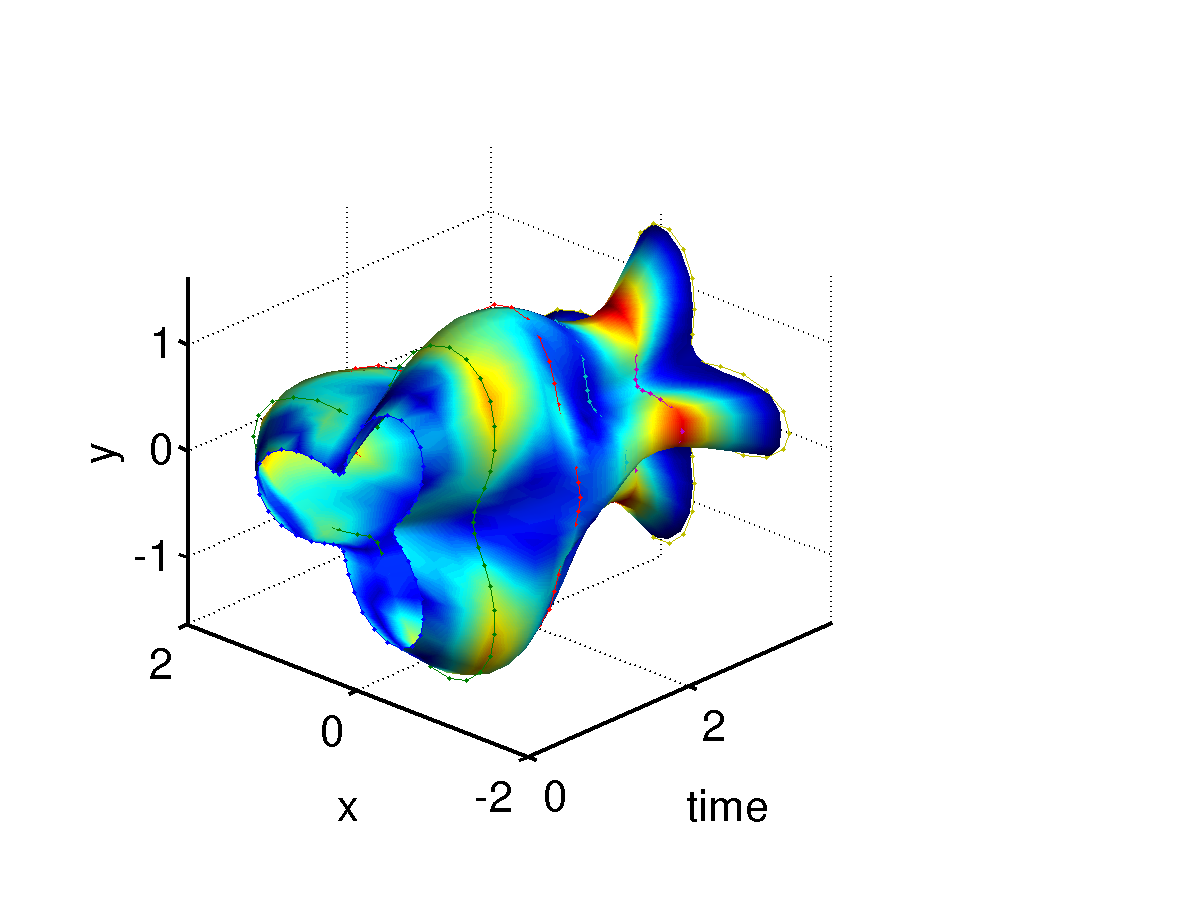}
\end{minipage}
\end{tabular}
  \caption{The figure displays two estimation experiments of shape
    splines for two different problems. From left to right, the first
    column displays the sequence of observed shapes, the second and
    the third column two different viewpoints. The color of any point 
    on the space-time surface is related
    to the norm of its control variable (with increasing values from
    deep blue to red)}
\label{fig:14.1.2}
\end{figure}
A second important point is that the shape spline comes with 
the estimation in time of the control variable $u$ which can be
interpreted as an external force bending the underlying geodesic. 
What we see in this example and in the other similar 
situations, displayed in Fig \ref{fig:14.1.2}, is that the point-wise values of $u$
give interesting information on the evolution process. More
specifically, in every example, the initial and end shape 
have specific features (numbers of lobes, orientation, etc)  
and the most active zones are easily interpretable and correspond to 
transition regions in the shape evolution.
\subsection{Robustness to noise}
The robustness to noise is a rather important subject from a 
practical point of view. Indeed, the noise basically degrades the \emph{spatial}
resolution of the measurements so that the evolution through time 
of a particular point of the evolving curve may be a sharply broken line. The 
standard spline approach can be quite efficient in filtering this noise if 
the time sampling frequency is high enough.  This is hardly the case in 
many important situations. However, neighboring points behave coherently 
through time and offer an interesting source of spatial redundancy. 
\begin{figure}[h]
  \centering
  \begin{tabular}[h]{cc}
 \hspace{-50mm} 
    \begin{minipage}[h]{0.5\linewidth}
    \includegraphics[width=9cm]{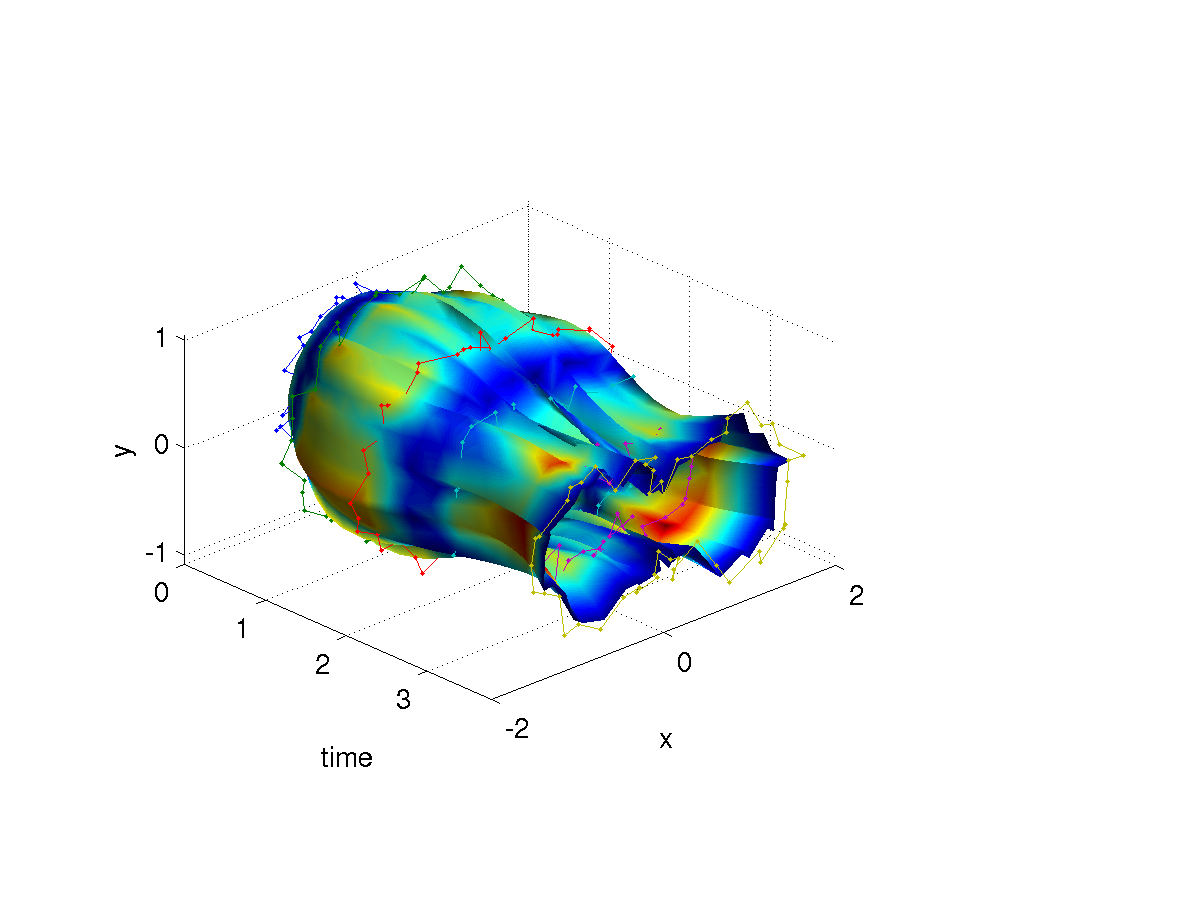}
    \end{minipage}
%&
%\begin{minipage}[h]{0.3\linewidth}
%\hspace{-10mm}\includegraphics[width=8cm]{Paperfig/vasque_w2s4lam2n2c.png}
%\end{minipage}
&\hspace{-10mm}
\begin{minipage}[h]{0.3\linewidth}
\includegraphics[width=9cm]{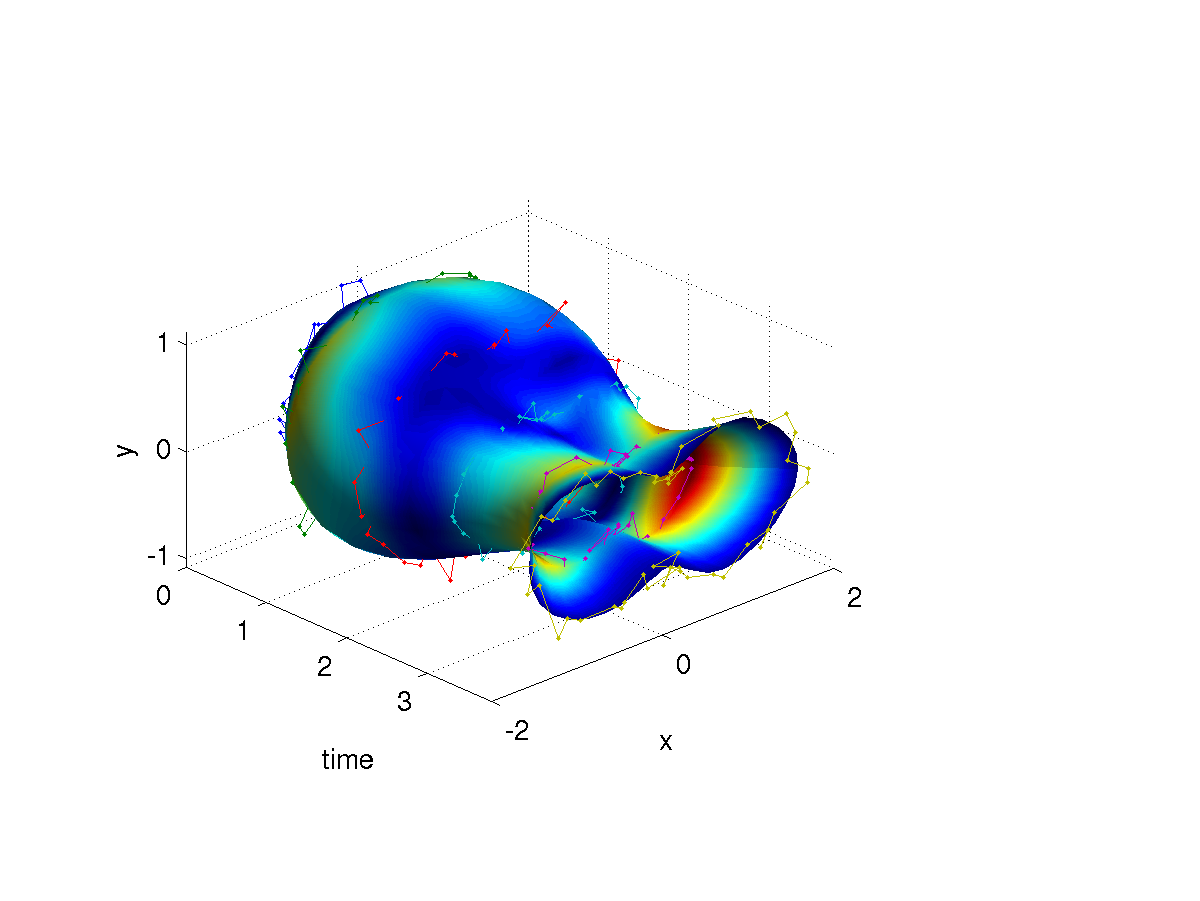}
\end{minipage}    
  \end{tabular}
  \caption{Robustness to noise. An i.i.d. Gaussian noise with standard 
deviation $\sigma=0.1$ is added to each measurement point. Shape splines are computed with two different scale parameters, $\lambda=0.001$ on the left, $\lambda=0.6$ on the right.}
  \label{fig:10.2.2}
\end{figure}
Much of the large deformation shape space theory involves the integration 
of spatial redundancy in the comparison between shapes. The shape spline 
setting, considering shapes as a whole and not as a bag of independent points,
keeps this important aspect but adds a time component and considers the problem 
in the full space-time setting. This robustness to noise is illustrated in Figure 
\ref{fig:10.2.2} where an i.i.d. Gaussian noise with standard deviation 
$\sigma=0.1$ is added and a series of shape splines are computed under 
increasing values of the spatial regularity scale parameter $\lambda$ as 
introduced in 
 (\ref{eq:15-4}). For low values of this parameters (with respect to the 
overall scale of the shapes) the reconstructed evolution is clearly far 
from any reasonable solution since the spatial redundancy is hardly taken 
into account.  Increasing the value of $\lambda$ to values in accordance 
to the scale of the object produces a much better reconstruction of the 
actual shapes at any observation time but also keeps existing time regularity.
\subsection{Extrapolation}
Another distinguished feature of the usual spline setting which is 
extended in the shape spline setting is the fact that the extrapolation 
of the data outside the interval of observation is quite straightforward. 
\begin{figure}[h]
  \centering
  \begin{tabular}[hbtp]{cc}
\hspace{-10mm}
    \begin{minipage}[h]{0.6\linewidth}
      \includegraphics[width=7cm]{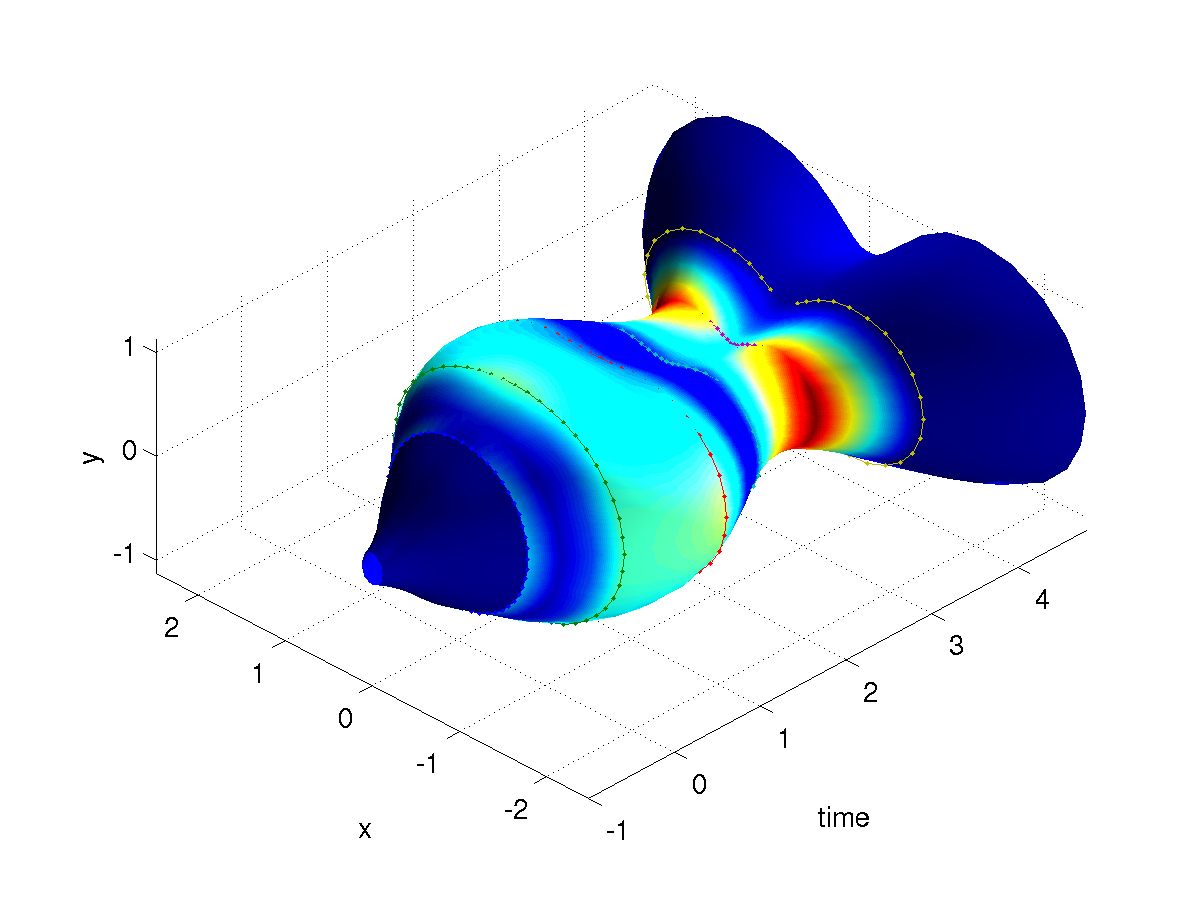}
    \end{minipage}
&
\begin{minipage}[h]{0.5\linewidth}
\hspace{-10mm}\includegraphics[width=7cm]{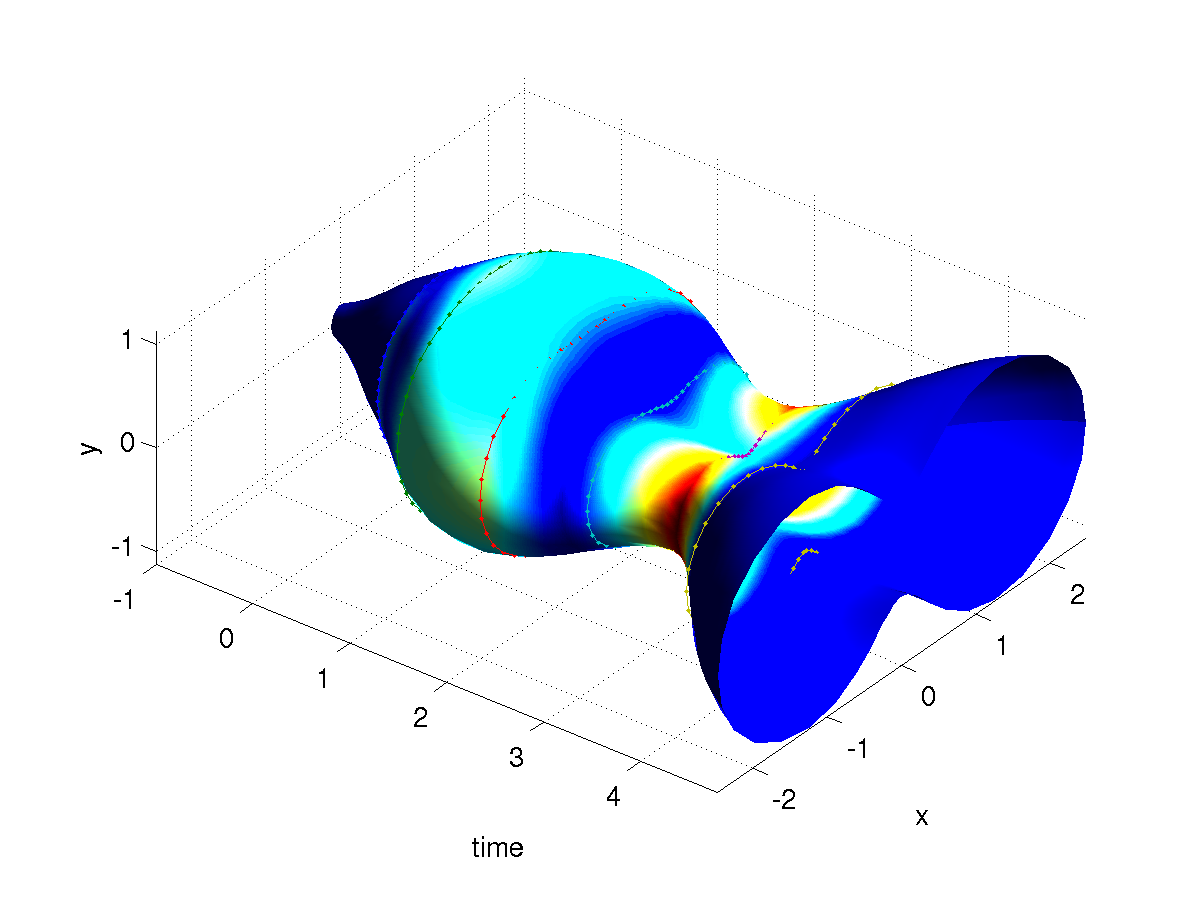}
\end{minipage} 
  \end{tabular}
  \caption{Extrapolations. The extrapolation of the 
evolution at both ends of the observation interval ($\lambda=0.6$, $\sigma=0$).}
  \label{fig:10.2.3}
\end{figure}
Indeed, outside the limits of the observation interval, the value of the 
control parameter $u$ is set to zero and 
the evolution is naturally extended with a geodesic evolution. Moreover, one 
can check (see Remark \ref{rem:9.3.1}) that $u$ vanishes at the last observation time 
so that the previous extension is $C^0$ for the control variable $u$ 
and $C^1$ for the shape variable $x$. Note that in the standard growth 
model described in (\ref{eq:21.12.1}), the evolution is extrapolated by 
fixing the shape variable to its value at the last observation and this 
extrapolation is only $C^0$. In Figure \ref{fig:10.2.3}, we  
display a simple example of extrapolation where the underlying evolution 
within the observation interval is the same than in Figure \ref{fig:10.2.1}. 
The computed 
extrapolation appears visually quite natural at both ends.
\subsection{Comparison with piecewise geodesic evolution}
We end this section with a comparison with the piecewise geodesic interpolation scheme \cite{mty02},  derived from (\ref{eq:21.12.1}) for a finite set 
of observation points as the minimizing solution of 
\begin{equation}
  \label{eq:14.2.1}
  J^x(v)=\frac{1}{2}\int_{0}^{T}|v_t|_V^2dt +\gamma\sum_{i=1}^M |x_{t_k}^D-x_{t_k}|^2
\end{equation}
where as previously stated, $x_t=\phi^v_t(x_0)$ and $\phi^v_t$ is the flow of 
$v\in L^2([0,1],V)$. 
\begin{figure}[h]
  \centering
      \begin{tabular}[hbtp]{ccc}
\hspace{-10mm}
        \begin{minipage}[h]{\jmcm}
          \includegraphics[width=\imcm]{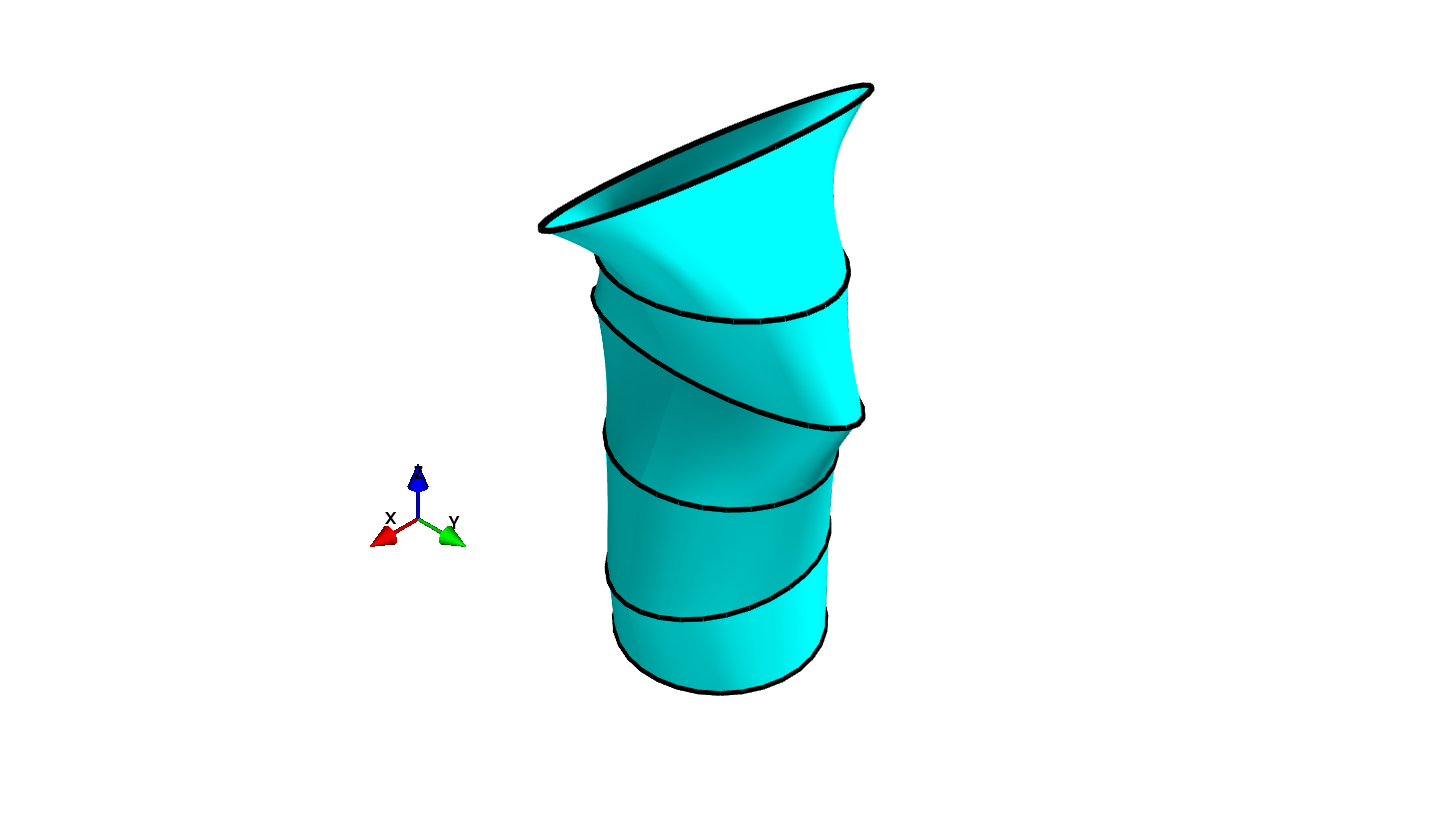}
        \end{minipage}&
        \begin{minipage}[h]{\jmcm}
          \hspace{-5mm} \includegraphics[width=\imcm]{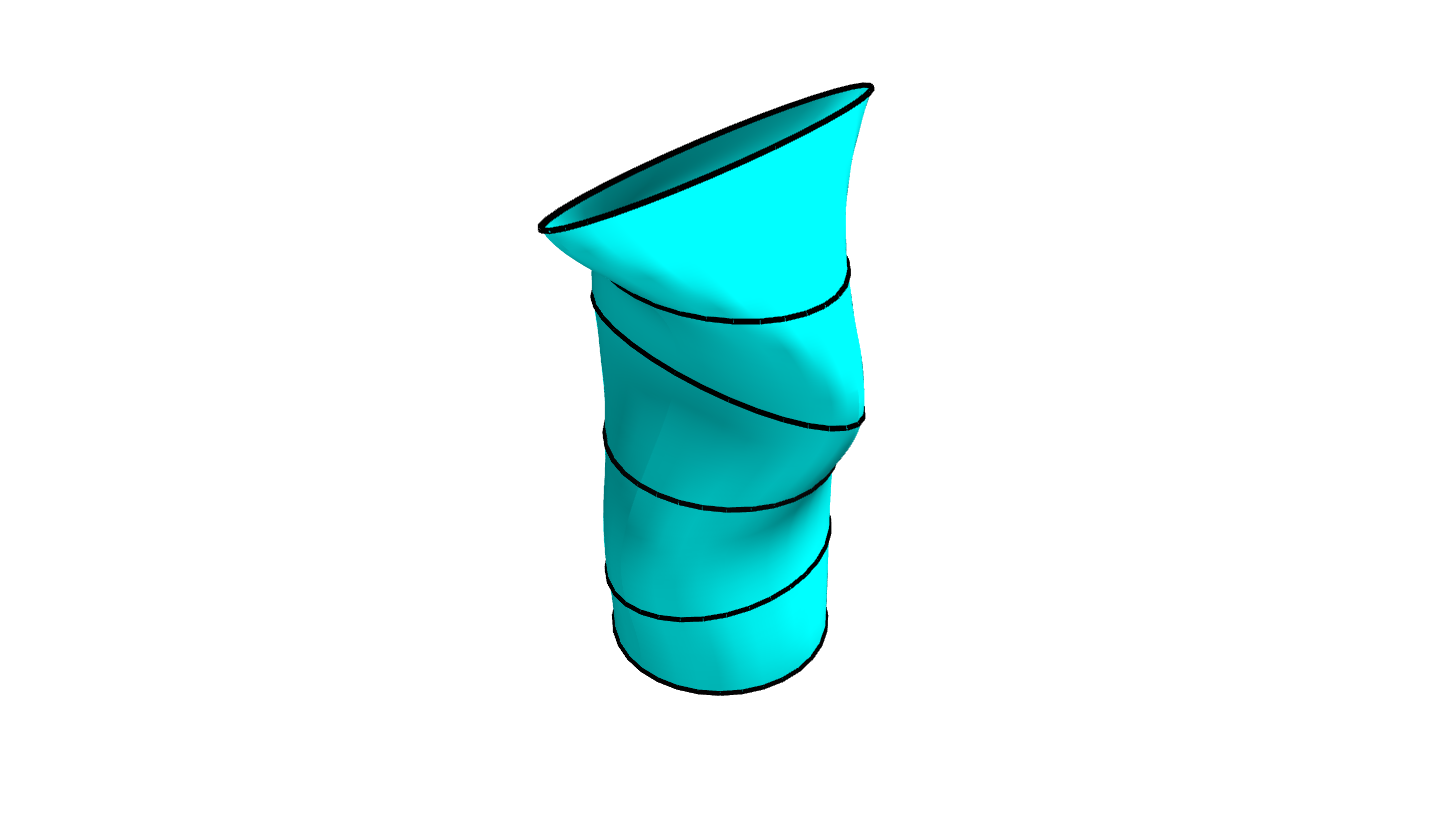}
        \end{minipage}&
        \begin{minipage}[h]{\jmcm}
          \hspace{-10mm} \includegraphics[width=\imcm]{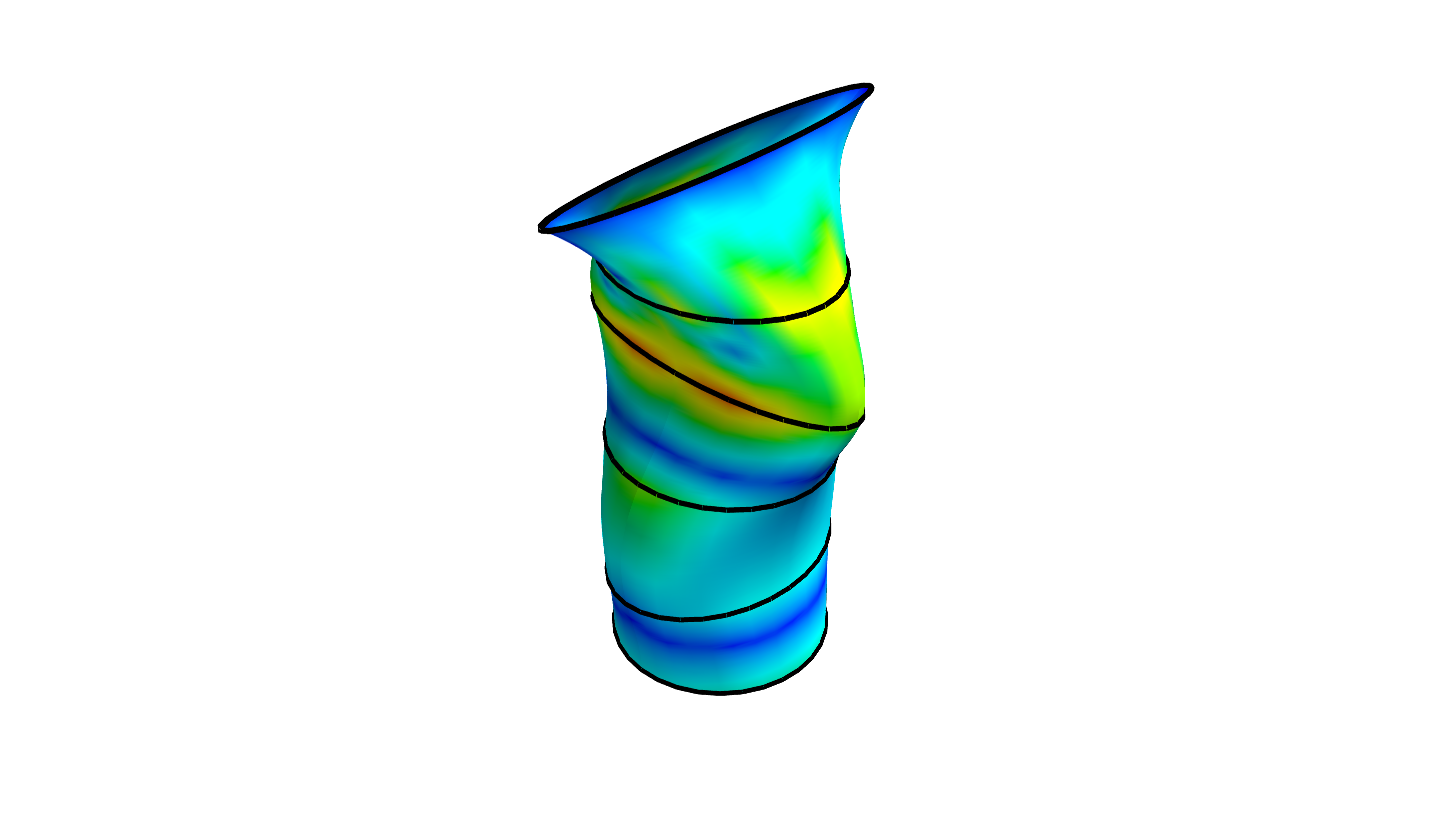}
        \end{minipage}\\
 \hspace{-10mm}       \begin{minipage}[h]{\jmcm}
          \includegraphics[width=\imcm]{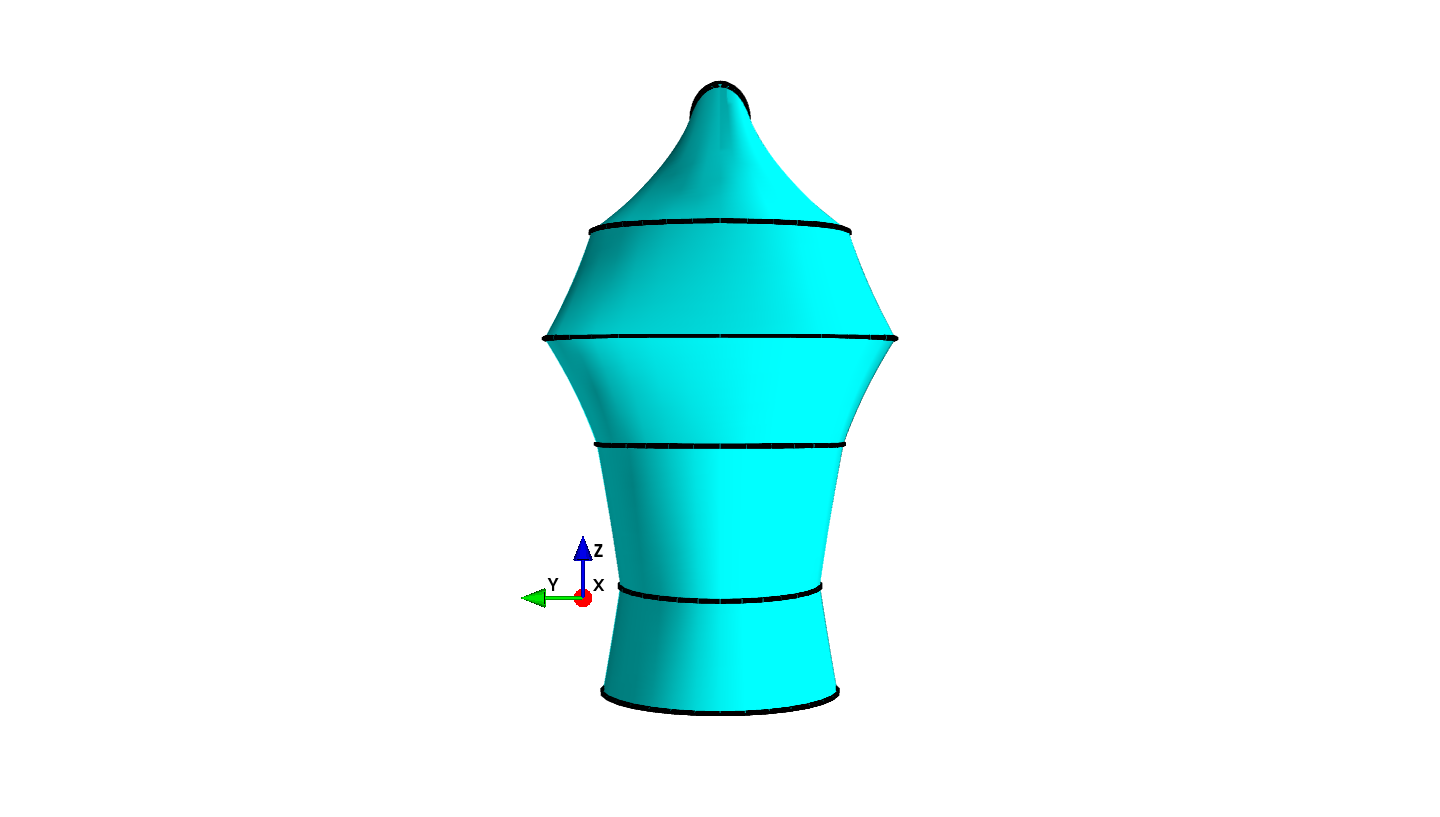}
        \end{minipage}&
        \begin{minipage}[h]{\jmcm}
          \hspace{-5mm} \includegraphics[width=\imcm]{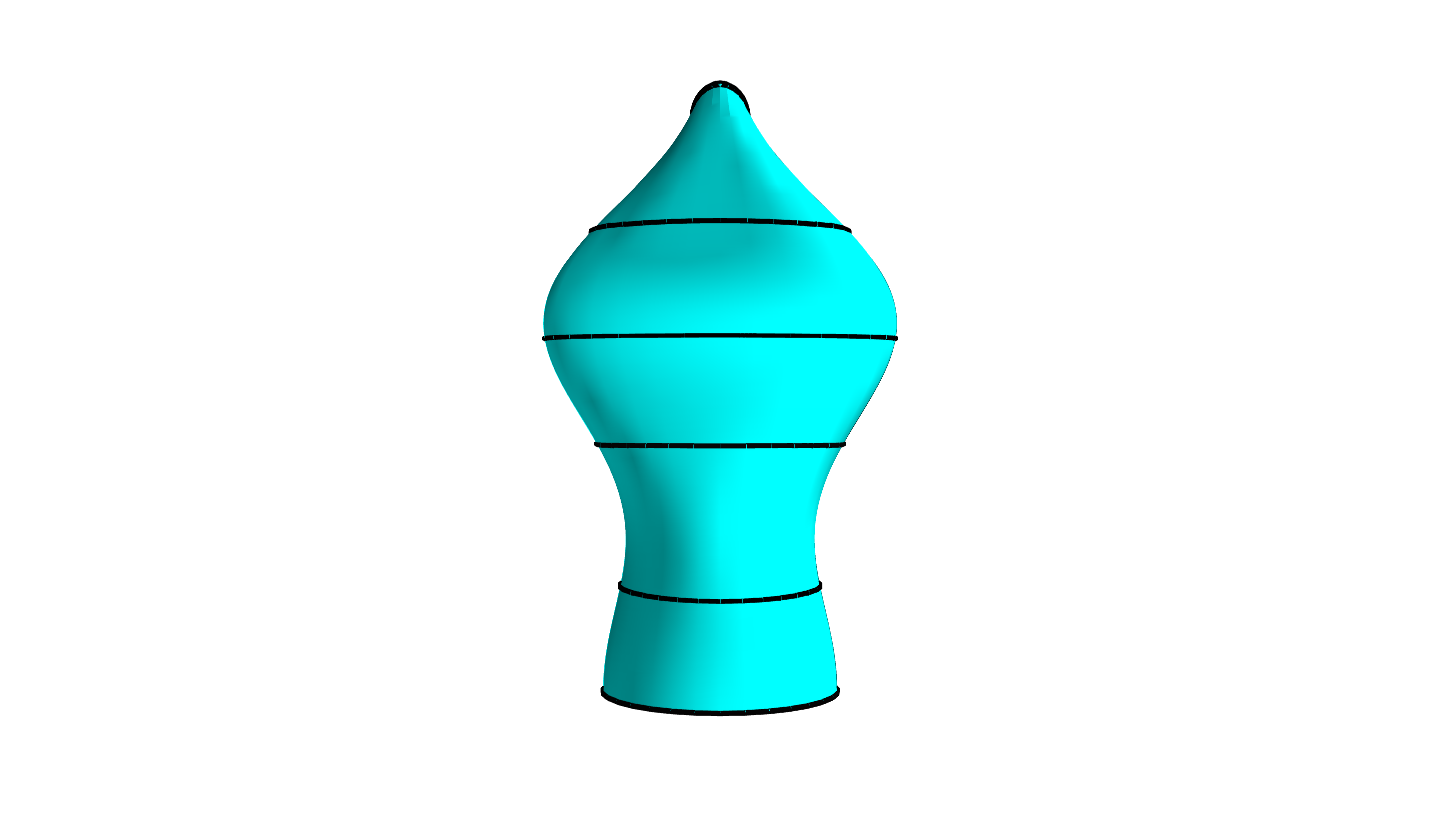}
        \end{minipage}&
        \begin{minipage}[h]{\jmcm}
          \hspace{-10mm} \includegraphics[width=\imcm]{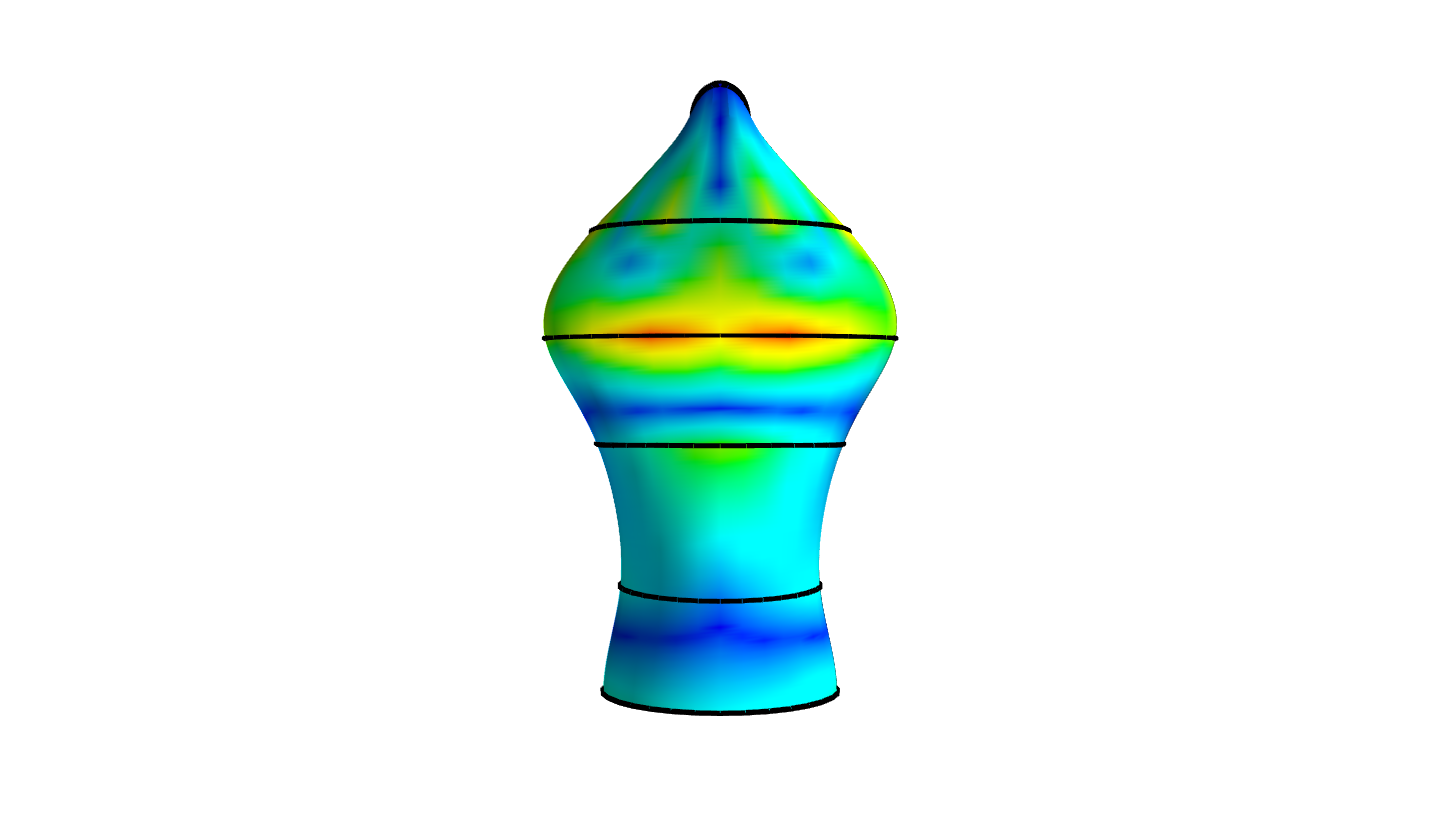}
        \end{minipage}\\
 \hspace{-10mm}       \begin{minipage}[h]{\jmcm}
          \includegraphics[width=\imcm]{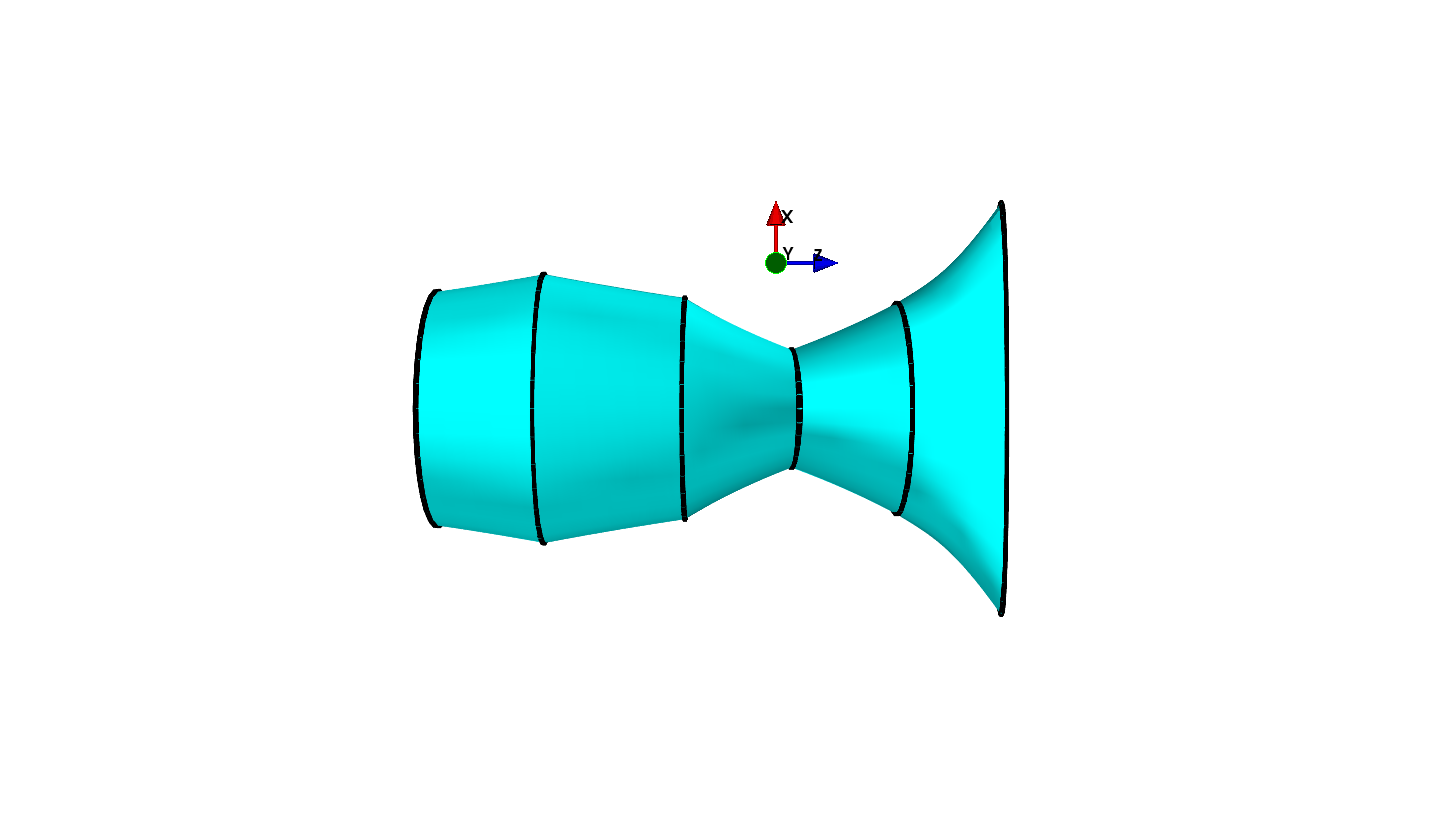}
        \end{minipage}&
        \begin{minipage}[h]{\jmcm}
         \hspace{-5mm} \includegraphics[width=\imcm]{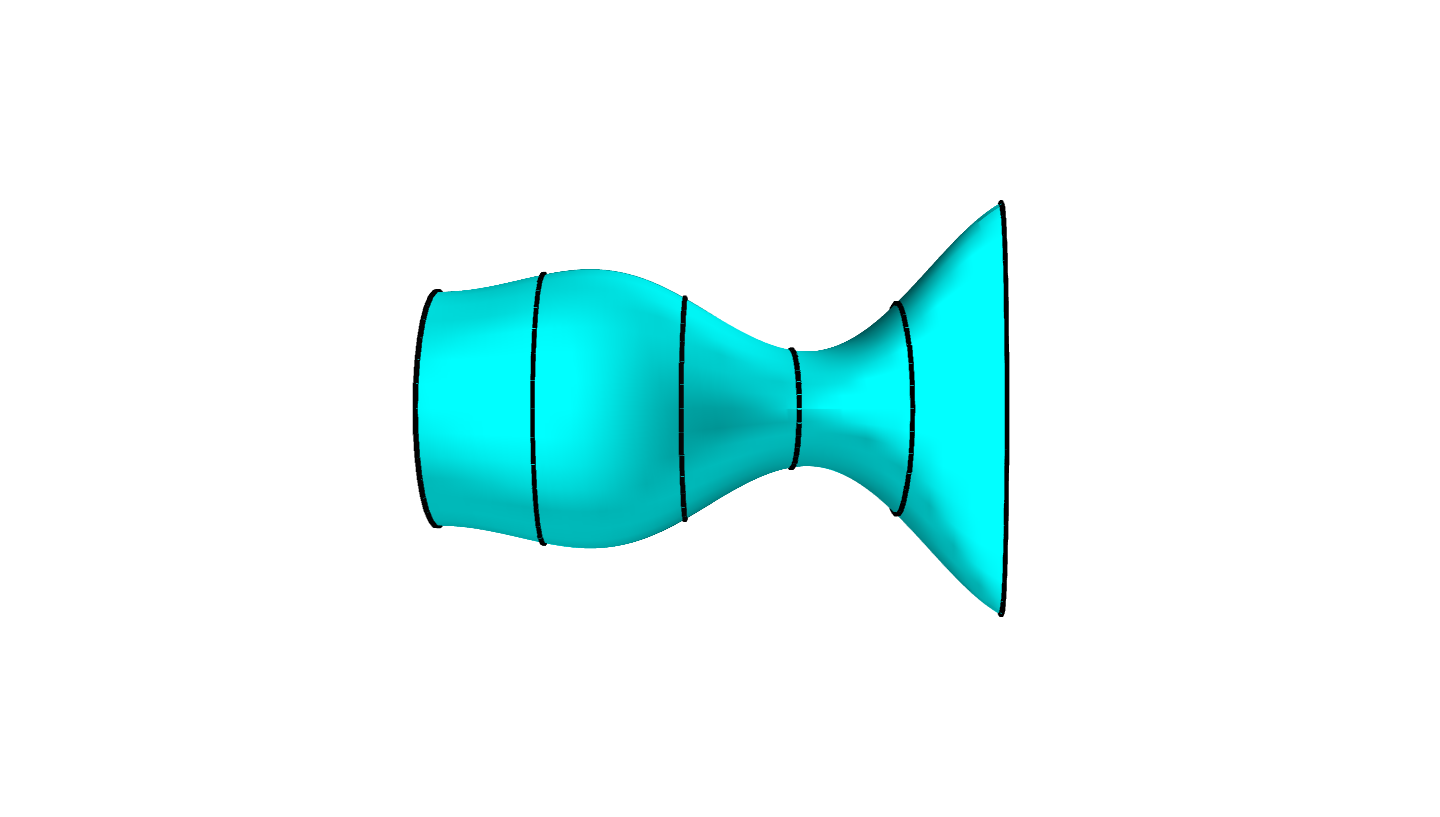}
        \end{minipage}&
        \begin{minipage}[h]{\jmcm}
          \hspace{-10mm} \includegraphics[width=\imcm]{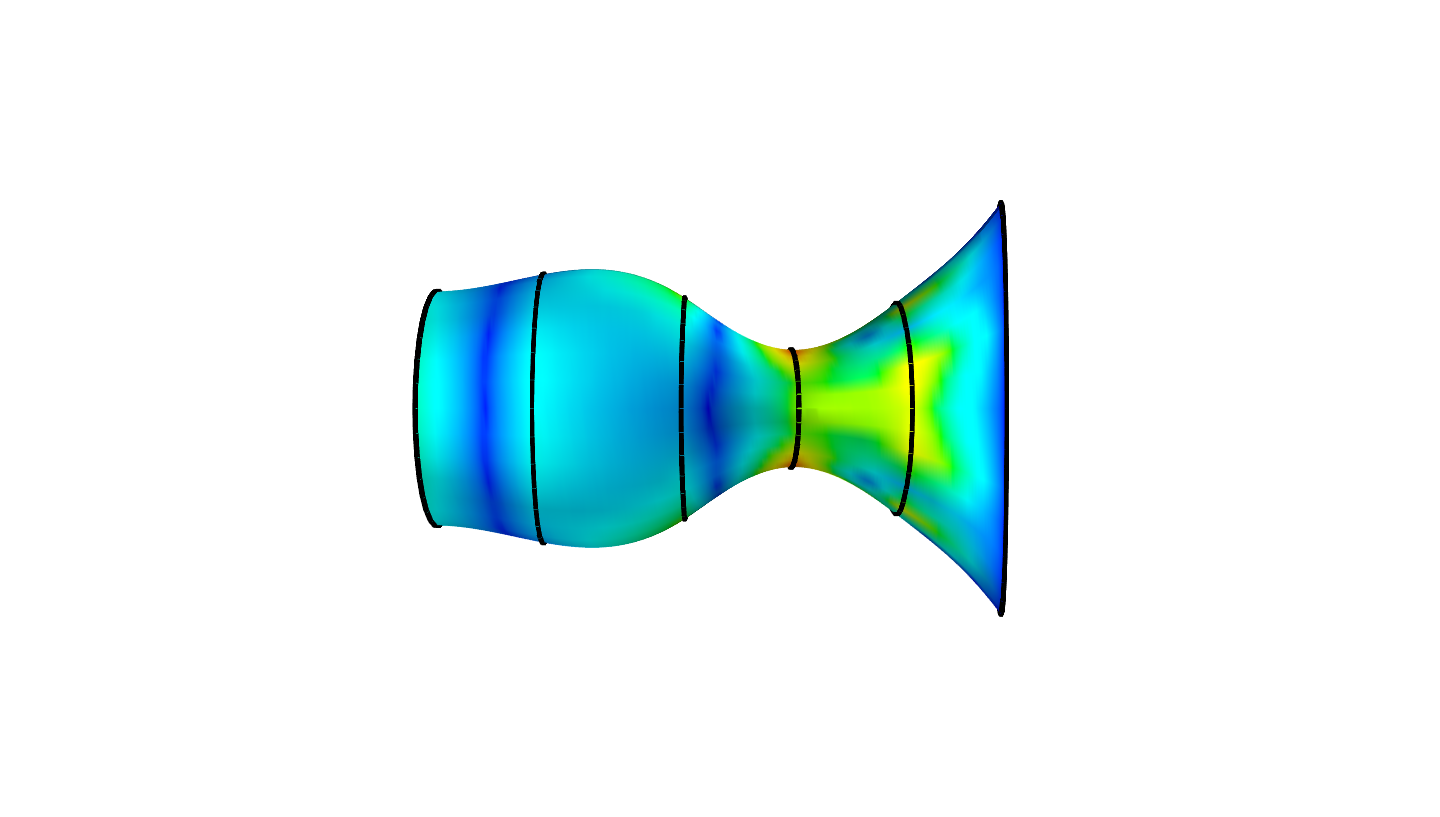}
        \end{minipage}
      \end{tabular}

  \caption{Comparison with piecewise geodesic evolution. On this example the middle column corresponds to 3 different views of a synthetic shape evolution (here the time axis is vertical in the first two rows and horizontal in the last row). The first column correspond to the piecewise geodesic evolution one can get from (\ref{eq:21.12.1}) and the last one to the shape spline estimation.}
  \label{fig:10.2.0}
\end{figure}
We display an experiment on a synthetic evolution where $t\to x^D_t$ is 
given by a simple analytic formula where a circle evolves smoothly into 
an ellipse by increasing its eccentricity through time combined with 
a rotation of the principal axis (see three orthogonal views of the synthetic 
object in the middle column of Fig. \ref{fig:10.2.0}). We display the estimated 
evolution in the piecewise geodesic setting given by (\ref{eq:14.2.1}) and  
with a shape spline in Fig \ref{fig:10.2.0}. As expected, the piecewise geodesic 
estimation provides good results but with a loss of regularity at the observation 
points as in the simpler situation of piecewise linear interpolation
in signal processing. The shape spline seems to perform better at the
observation points but also to provide a better estimation \emph{between}
observation points.
\begin{figure}[h]
  \centering
  \begin{tabular}[h]{cc}
\hspace{-10mm}  \includegraphics[width=7.5cm]{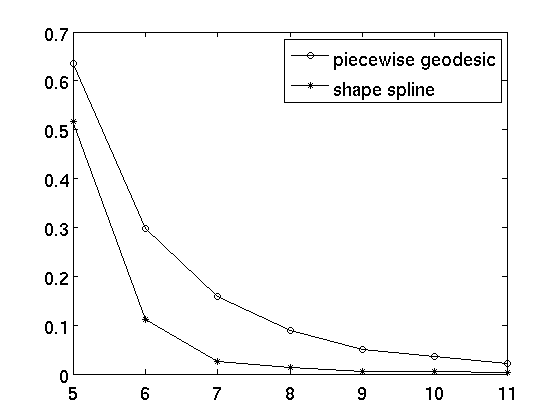} &
\hspace{-10mm}\includegraphics[width=7.5cm]{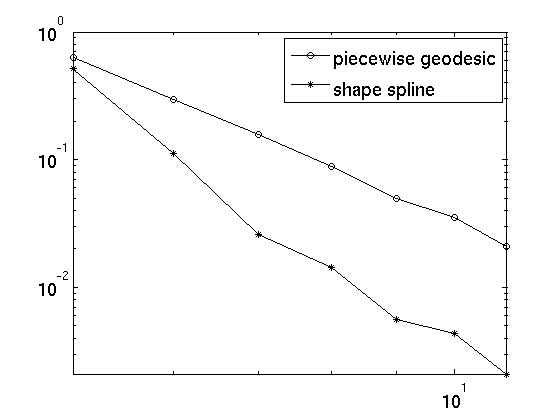}
\end{tabular}
  \caption{Comparison of the $L^2$ error $E$ (see (\ref{eq:14.2.2}))
between piecewise geodesic and shape spline
    estimation. The horizontal axis is the number $M$ of observation
    points and the vertical axis the $L^2$ distance in arbitrary units
    through time
    between the estimated shape and the actual shape provided by 
the synthetic evolution. Right panel, log-log plot.}
\label{fig:14.1.1}
\end{figure}
In Fig. \ref{fig:14.1.1}, we provide a more quantified comparison of
the approximation quality between the estimation process by computing 
the $L^2$ error
\begin{equation}
  \label{eq:14.2.2}
 E\doteq \left(\int_{t_1}^{t_M}|x^D_t-x_t|^2dt\right)^{1/2}
\end{equation}
as a function of the number $M$ of observations. It is quite clear that
the convergence is faster with shape spline interpolation than
with the piecewise geodesic interpolation (we do not go beyond $11$
observation points since the error is small enough to be approaching other
numerical errors in the optimization scheme). The loglog-plot in
Fig. \ref{fig:14.1.1} seems to indicate a polynomial convergence
in $C/M^\alpha$ very similar to the classical situation in
interpolation theory that ($\alpha=2$ for linear spline and 
$\alpha=4$ for cubic spline)\footnote{Note that we have implemented
  here a least square approximation algorithm (see (\ref{eq:25.2}) and
  (\ref{eq:14.2.1}) and not an exact
  interpolation algorithm but with a value $\gamma$ weighting the data
  attachment term high enough to make the data error negligible.} 
\section{A stochastic shape spline model} \label{StoModel}

In this section, we study further the first candidate for the second order model \eqref{eq:24.2} of stochastic evolutions of shapes. We prove that the solutions are well defined for all times and present some simulations that highlight some features of this model.

\subsection{Non blow-up result}\label{nonblowup}

This section is devoted to study the well-posedness of the SDE \eqref{eq:24.2} introduced as our generative growth model in subsection \ref{stochmodel}. The random force is chosen to be an increment of the Brownian motion though we could also have introduced a Levy process in the evolution of the momentum to account for sudden activations of cells. It would turn our growth model into a more realistic one. However, this Brownian perturbation is the first step toward such a model and we will now discuss its feasibility from the mathematical point of view. We will prove that the solutions of the SDE do not blow up in finite time a.s.

The stochastic differential system is:
\begin{subequations} \label{M1}
\begin{align}
& dp_t = -\partial_x H_0(p_t,x_t) \, dt + \ve dB_t \, ,   \\ 
& dx_t = \partial_p H_0(p_t,x_t) \, dt\,. 
\end{align}
\end{subequations}
\noindent
Here, $\ve$ is a constant parameter and $B_t$ is a Brownian motion on $\R^{dn}$. We will work with the Gaussian kernel but this can be directly extended to other kernels. 
When $\ve$ is constant, there is no difference studying these stochastic differential equations with the Ito integral or Stratonovich one. However, for a general variance term, we will use the Ito stochastic integral. From the theorem of existence and uniqueness of solution of stochastic differential equation under the linear growth conditions, we can work on the solutions of such equations for a large range of kernels. 
Yet in our case the Hamiltonian is quadratic, and the classical results for existence and uniqueness of stochastic differential equations only prove that the solution is locally defined. In the deterministic case, this quadratic property could imply existence of a blow-up. To prove that the solutions do not blow up in finite time in the deterministic case ($\ve = 0$), we can use the fact that the Hamiltonian of the system is constant in time.
By adapting that proof (also closely related to the proposition 
\ref{Thm:23.2.1}) and controlling the Hamiltonian, we will prove that the solutions are defined for all time. 
\\
A first remark we will use is the following, for any $\alpha \in \ms{R}^d$ and $z \in \ms{R}^d$,
\begin{equation} \label{IneqRKHS}
\langle \alpha,K_V(z,z)\alpha \rangle_{\mathbb{R}^d} \leq C^2 |\alpha|^2_{\mathbb{R}^d} \,.
\end{equation}

Thus, we introduce the stopping times defined as follows: let $M>0$ be a constant and
\begin{equation} \label{stoppingtimes}
\tau_M = \{ t \geq 0 \, | \,\max(|x_t|,|p_t|) \geq M \} \, , 
\end{equation}
let also $\tau_\infty=\lim_{M\to\infty}\uparrow
\tau_M$ be the explosion time. 
Differentiating $H_0(p_{t \wedge \tau_M},x_{t \wedge \tau_M})$ with respect to $t$, we get on $(t<\tau_M)$:
\begin{equation*}
dH_0(t) = \partial_x H_0(p_t,x_t)dx_t + \partial_p H_0(p_t,x_t)dp_t  +  \sum_{i=1}^n \text{tr}(K_V(x_i(t),x_i(t)))\frac{\ve^2}{2}dt \,.
\end{equation*}
In the deterministic case the Hamiltonian is constant, whereas here the stochastic perturbation gives
$$ \partial_x H_0(p_t,x_t)dx_t + \partial_p H_0(p_t,x_t)dp_t = \ve \partial_p H_0(p_t,x_t) dB_t \,.$$
Thus
$$
\int_0^{T \wedge \tau_M} dH_0(t) = \int_0^{T \wedge \tau_M} \ve \langle \partial_p H_0(p_t,x_t), dB_t \rangle  + \int_0^{T \wedge \tau_M} \sum_{i=1}^n \text{tr}(K_V(x_i(t),x_i(t)))\frac{\ve^2}{2}dt \, ,  \nonumber$$
and
\begin{equation}
  \label{eq:9.3.1}
  E[H_0(p_{T \wedge \tau_M},x_{T \wedge \tau_M})] \leq H_0(0) + E(C^2\frac{\ve^2}{2} dn \, T \wedge \tau_M) \leq H_0(0) + C^2\ve^2 dn T \,. 
\end{equation}
Now, we aim at controlling $x_{t \wedge \tau_M}$ using the control on $dx_t$ given by $|\partial_p H_0(p_t,x_t)|_{\infty} \leq C \sqrt{H_0(p_t,x_t)}$:
  \begin{multline}
    |x_{\tau_M\wedge t}|\leq |x_0|+\int_0^{\tau_M\wedge
      t}CH_0(p_s,x_s)^{1/2}ds
\leq |x_0|+\int_0^{\tau_M\wedge
      t}CH_0(p_{s\wedge \tau_M},x_{s\wedge\tau_M})^{1/2}ds\\
\leq  A_t\doteq  |x_0|+\int_0^{\tau_\infty\wedge
      t}CH_0(p_{s\wedge \tau_\infty},x_{s\wedge\tau_\infty})^{1/2}ds\,.
  \end{multline}
However, $0\leq A_t$ $P$ a.s.  and 
by monotone convergence theorem (recall that $H_0$ is non negative),
$$E(A_t)=\lim_{M\to\infty}(|x_0|+E\left(\int_0^{t\wedge\tau_M} CH_0(p_{s\wedge
  \tau_M},x_{s\wedge \tau_M})^{1/2}ds \right).$$
Also,
 \begin{multline}
E\left(\int_0^{t\wedge\tau_M} H_0(p_{s\wedge
  \tau_M},x_{s\wedge \tau_M})^{1/2}ds\right)\leq E\left(\int_0^{t} H_0(p_{s\wedge
  \tau_M},x_{s\wedge \tau_M})^{1/2}ds\right)\\
\stackrel{Fub.}{=}\int_0^{t} E\left(H_0(p_{s\wedge
  \tau_M},x_{s\wedge \tau_M})^{1/2}\right)ds\stackrel{Jen.}{\leq}\int_0^{t} E\left(H_0(p_{s\wedge
  \tau_M},x_{s\wedge \tau_M})\right)^{1/2}ds\\
\stackrel{CS+~(\ref{eq:9.3.1})}{\leq}\sqrt{t}\left(\int_0^t(H_0(0)+C^2\ve^2 dns)\,
  ds\right)^{1/2} \, . \label{eq:1'}
\end{multline}  
We deduce 
$$E(A_t)\leq |x_0|+C\sqrt{t}\left(\int_0^t(H_0(0)+C^2\ve^2 dns)\,
  ds\right)^{1/2}<\infty\text{ and }A_t<\infty\ P\ a.s.$$
and as a consequence $$\limsup_{M\to\infty} |x_{t\wedge\tau_M}| < +\infty \, P \, a.s.$$
We also control the evolution equation of the momentum as follows,
\begin{equation} \label{unederniere}
 |p_{t \wedge \tau_M}| \leq \int_0^{t \wedge \tau_M} |\partial_x H_0(p_s,x_s)| \, ds + | p_0 + \,\int_0^{t \wedge \tau_M} \ve dB_s \, | \, .
\end{equation}
Now we use the assumption \eqref{C1inj} to control $\partial_x H_0(p,x)$:
\begin{equation*}
|\partial_x H_0(p,x)| \leq |p| |dv(x)| \leq C|p| H_0^{1/2} \, .
\end{equation*}
We rewrite inequality \eqref{unederniere} and we use Gronwall's Lemma to get:
\begin{align*}
& |p_{t \wedge \tau_M}| \leq \int_0^{t \wedge \tau_M}  C|p_s| \, H_0(p_{s},x_{s})^{1/2} \, ds + | p_0 + \,\int_0^{t \wedge \tau_M} \ve dB_s \, | \, , \\
& |p_{t \wedge \tau_M}| \leq \left(|p_0| +  \sup_{u \leq t} | \,\int_0^{u \wedge \tau_M} \ve dB_s \, | \right) e^{\int_0^{t \wedge \tau_M} C H_0(p_s,x_s)^{1/2} ds } \, , \\
& |p_{t \wedge \tau_M}| \leq \left(|p_0| +  \sup_{u \leq t\wedge \tau_{\infty}} | \,\int_0^{u} \ve dB_s \, | \right) e^{\int_0^{t \wedge \tau_{\infty}} C H_0(p_s,x_s)^{1/2} ds } \, .
\end{align*}
The first term on the right-hand side $|p_0| +  \sup_{u \leq t\wedge \tau_{\infty}} | \int_0^{u} \ve dB_s \, |$ is bounded by $ |p_0| +  \sup_{u \leq t} | \,\int_0^{u} \ve dB_s \, | < \infty \, P \, a.s.$ and with inequality \eqref{eq:1'} we have that $$  e^{\int_0^{t \wedge \tau_{\infty}} C H_0(p_s,x_s)^{1/2} ds} < \infty \, P \, a.s. $$
Since on $(\tau_\infty\leq t)$ one
has $$\lim_{M\to\infty}\max(|x_{t\wedge\tau_M}|,|p_{t\wedge
  \tau_M}|)= \lim_{M \to \infty} |p_t| = \infty \, ,$$ we deduce $P(\tau_\infty\leq t)=0$ and
$\tau_\infty=+\infty$ almost surely.

\vspace{0.3cm}

\noindent
We have proved for the case $\ve(p,x) = \ve Id$,

\begin{Thm} \label{landmarkcase}
Under assumption \eqref{C1inj}, the solutions of the stochastic differential equation defined by
\begin{eqnarray*} \label{sytem_sto_land}
 dp_t &=& -\partial_x H_0(p_t,x_t)dt + \ve(p_t,x_t) dB_t   \\ 
 dx_t &=& \partial_p H_0(p_t,x_t)dt. 
\end{eqnarray*}
are non exploding when $\ve: \ms{R}^{nd} \times \ms{R}^{nd} \mapsto L(\ms{R}^{nd})$ is a \lip and bounded map.
\end{Thm}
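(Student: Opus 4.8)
The plan is to mimic exactly the argument carried out above for the case $\ve(p,x) = \ve\,\Id$, but to keep track of the bound $\NO{\ve(p,x)}\leq C_\ve$ and the \lip constant everywhere the specific structure $\ve(p,x)=\ve\,\Id$ was used. First I would introduce the same stopping times $\tau_M = \inf\{t\geq 0 \mid \max(|x_t|,|p_t|)\geq M\}$ and $\tau_\infty = \lim_{M\to\infty}\uparrow \tau_M$, and recall that by the classical existence-and-uniqueness theorem for SDEs with locally \lip coefficients (here $\partial_x H_0$, $\partial_p H_0$ and $\ve(\cdot,\cdot)$ are all locally \lip since $H_0$ is $C^\infty$ and $\ve$ is globally \lip) there is a unique strong solution up to $\tau_\infty$. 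The whole game is then to show $\tau_\infty = +\infty$ a.s.

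Next I would apply It\^o's formula to $t\mapsto H_0(p_{t\wedge\tau_M},x_{t\wedge\tau_M})$. The drift terms $\partial_x H_0\,dx + \partial_p H_0\,dp$ still cancel against the $-\partial_x H_0\,dt$ part coming from the momentum equation, leaving the stochastic integral $\int_0^{t\wedge\tau_M}\langle \partial_p H_0(p_s,x_s), \ve(p_s,x_s)\,dB_s\rangle$ (a true martingale once stopped at $\tau_M$) plus the It\^o second-order correction $\frac12\int_0^{t\wedge\tau_M}\mathrm{tr}\!\big(\ve(p_s,x_s)^T \partial^2_{pp}H_0(p_s,x_s)\,\ve(p_s,x_s)\big)\,ds$. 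Here is where I must be slightly more careful than in the constant case: $\partial^2_{pp}H_0(p,x) = K_x = (K_V(x_i,x_j))_{ij}$ and, using \eqref{IneqRKHS} together with $|K_V(x_i,x_j)|\leq C^2$ (which follows from the RKHS/Cauchy--Schwarz bound $\langle\alpha,K_V(z,z')\beta\rangle\leq C^2|\alpha||\beta|$), one gets $\mathrm{tr}(\ve^T K_x \ve) \leq C^2 n^2 d \, \NO{\ve}^2 \leq C^2 n^2 d\, C_\ve^2$, a fixed constant. Taking expectations and using that the stopped stochastic integral has zero mean yields $E[H_0(p_{T\wedge\tau_M},x_{T\wedge\tau_M})] \leq H_0(0) + \tfrac12 C^2 n^2 d\, C_\ve^2\, T$, the exact analogue of \eqref{eq:9.3.1}.

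From there the argument is verbatim the one above. Using $|\partial_p H_0(p,x)|_\infty \leq C H_0(p,x)^{1/2}$ I control $|x_{t\wedge\tau_M}|$ by $A_t \doteq |x_0| + C\int_0^{t\wedge\tau_\infty}H_0(p_{s\wedge\tau_\infty},x_{s\wedge\tau_\infty})^{1/2}\,ds$, and by monotone convergence, Fubini, Jensen and Cauchy--Schwarz together with the $H_0$-bound just obtained, $E(A_t)<\infty$, hence $A_t<\infty$ $P$-a.s. and $\limsup_{M\to\infty}|x_{t\wedge\tau_M}|<\infty$ $P$-a.s. For the momentum, $|p_{t\wedge\tau_M}| \leq \int_0^{t\wedge\tau_M}C|p_s|H_0(p_s,x_s)^{1/2}\,ds + |p_0 + \int_0^{t\wedge\tau_M}\ve(p_s,x_s)\,dB_s|$; the stochastic integral is a local martingale with quadratic variation bounded by $C^2 C_\ve^2 d n\,(t\wedge\tau_\infty)$, so by Doob's maximal inequality (or the Burkholder--Davis--Gundy inequality) $\sup_{u\leq t}|\int_0^{u\wedge\tau_\infty}\ve(p_s,x_s)\,dB_s| < \infty$ $P$-a.s.; then Gronwall's lemma gives $|p_{t\wedge\tau_M}| \leq \big(|p_0| + \sup_{u\leq t}|\int_0^{u\wedge\tau_\infty}\ve(p_s,x_s)\,dB_s|\big)\exp\big(C\int_0^{t\wedge\tau_\infty}H_0(p_s,x_s)^{1/2}\,ds\big)$, and the right-hand side is finite $P$-a.s. by the $x$-estimate above. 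Since on $\{\tau_\infty\leq t\}$ one has $\lim_{M\to\infty}\max(|x_{t\wedge\tau_M}|,|p_{t\wedge\tau_M}|)=\infty$, we conclude $P(\tau_\infty\leq t)=0$ for every $t$, hence $\tau_\infty=+\infty$ a.s.

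The one genuinely new point compared with the constant-$\ve$ computation — and the only place a reader should pause — is the It\^o correction term: for a state-dependent $\ve$ one must verify it stays bounded, which reduces to the uniform bound $|K_V(x_i,x_j)|\leq C^2$ (a consequence of \eqref{C1inj} via the reproducing property) and boundedness of $\ve$; no \lip hypothesis on $\ve$ is needed for the a priori estimate, only for well-posedness of the SDE. Everything else is a transcription of the already-established argument, so I would present the proof as ``the same computation, now carrying the bound $\NO{\ve(p,x)}\leq C_\ve$ through, gives \dots'' and spell out only the modified It\^o formula and the modified \eqref{eq:9.3.1}.
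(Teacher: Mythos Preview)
Your proposal is correct and follows essentially the same route as the paper: apply It\^o's formula to $H_0$, bound the second-order correction $\mathrm{tr}(\ve^T K_x\ve)$ by a fixed constant using boundedness of $\ve$ and the RKHS bound on $K_x$, recover the analogue of \eqref{eq:9.3.1}, and then rerun the $x$- and $p$-estimates verbatim. The only cosmetic differences are that the paper bounds the trace via the largest eigenvalue $\lambda^*_{K_x}\leq \mathrm{tr}(K_x)\leq C^2 nd$ rather than entrywise, and is less explicit than you are about why $\sup_{u\leq t}\big|\int_0^{u\wedge\tau_\infty}\ve(p_s,x_s)\,dB_s\big|<\infty$ a.s.\ (your BDG/Doob remark is a welcome clarification).
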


\begin{proof}
To extend the proof to the case when $\ve$ is a \lip and bounded map of $p$ and $x$, we can prove that the preceding inequalities are still valid.

First, with the \lip property of $\ve$ the solutions are still defined locally. The Ito formula is now written as, on $(t < \tau_M)$
\begin{equation*}
dH_0(t) = \partial_x H_0(p_t,x_t)dx_t + \partial_p H_0(p_t,x_t)dp_t + \frac{1}{2}\text{tr}(\ve^T(p_t,x_t)K_{x_t}\ve(p_t,x_t))dt \,.
\end{equation*}
where $K_{x}$ is block matrix defined by $K_{x}\doteq (K_V(x_i,x_j))_{1\leq i,j\leq n}$.
\noindent

We still have the inequality (\ref{eq:9.3.1}) with 
$$\text{tr}(\ve^T(p_t,x_t)K_{x_t} \ve(p_t,x_t)) \leq (Cnd|\ve|_{\infty})^2$$ 
if $|\ve(p,x) w|^2 \leq |\ve|_{\infty}^2 |w|_{\infty}^2$ where $|\epsilon|_\infty$ denotes the supremum norm. Indeed, if $(e_i)_{i\in [1,nd]}$ the canonical basis of $\ms{R}^{nd}$, denoting $\ve\doteq\ve(x,p)$, we have 
\begin{equation*}
\text{tr}(\ve^t K_x \ve)  = \sum_{i=1}^{nd} \langle \ve(e_i),K_x\ve(e_i) \rangle  
\leq \lambda^{*}_{K_x} \sum_{i=1}^{nd} \langle \ve(e_i), \ve(e_i) \rangle\,, 
\end{equation*}
where $\lambda^{*}_{K_x}$ is the largest eigenvalue of $K_x$. We have $\lambda^{*}_{K_x}\leq \text{tr}(K_x)=\sum_{i=1}^n 
\text{tr}(K_V(x_i,x_i))$ and using (\ref{IneqRKHS}) we get $\text{tr}(K_V(x_i,x_i))\leq d\lambda^*_{K_V(x_i,x_i)}\leq dC^2$ so that $\lambda^{*}_{K_x}\leq C^2nd$. Hence,
$$
\text{tr}(\ve^t K_x \ve)  \leq C^2 nd \sum_{i=1}^{nd} |\ve|_{\infty}^2 \leq (Cnd|\ve|_{\infty})^2 \,.
$$ 
Thus we get,
\begin{align*}
&\int_0^{T \wedge \tau_M} dH_0(t) \leq \int_0^{T \wedge \tau_M} \langle \partial_p H_0(p_t,x_t), \ve(p_t,x_t)dB_t \rangle + \int_0^{T \wedge \tau_M} \frac{(Cnd|\ve|_{\infty})^2}{2}dt \, , \\
& E[H_0(T \wedge \tau_M)] \leq H_0(0) + E(\frac{(Cnd|\ve|_{\infty})^2}{2} \, T \wedge \tau_M) \leq H_0(0) + (Cnd|\ve|_{\infty})^2\,T \,,
\end{align*}
and all the remaining inequalities follow easily thanks to the control on $H_0$ and the bound on $\ve$.
\end{proof}

Once this stochastic model is well-posed on landmark space, the question of its extension to shape spaces naturally arises. It can be proved that this stochastic model does have an extension to the infinite dimensional case: in the case of $L^2(\ms{R}/\ms{Z},\ms{R}^2)$, the natural extension of the Brownian motion on the landmark space is a cylindrical Brownian motion on $L^2(\ms{R}/\ms{Z},\ms{R}^2)$. It may be somewhat surprising to deal with such irregular noise on the momentum variable, we stress the fact that this noise is read by the kernel which strongly regularizes the noise. Though we will not develop it further, it proves that this model has a consistent extension to continuous-shape spaces and could be used to deal with random evolutions of continuous shapes. 
% \\
% \textsl{[Peut-etre mettre ailleurs:]}The next issue that will be addressed in future work is the estimation of a mean trajectory within the spline model with a stochastic term.

\subsection{Simulations}
With these simulations we illustrate the interesting features we observed above. First, this model gives realistic perturbations of geodesics contrary to a first order model such as a Kunita flow. A simulation of a Kunita flow is illustrated in figure Fig.~\ref{Kunita} where the evolution of $40$ points on the unit circle is represented under a Gaussian kernel of width $0.9$. The time evolution has, as expected, the roughness of a Brownian motion and the space variation is smoother due to the kernel. In comparison, our stochastic model gives smoother evolutions in time as in Fig.~\ref{GeodesicNoise1_7} and Fig.~\ref{GeodesicNoise3_5}.

\begin{figure}[htbp]
  \centering	\includegraphics[width=5.5cm]{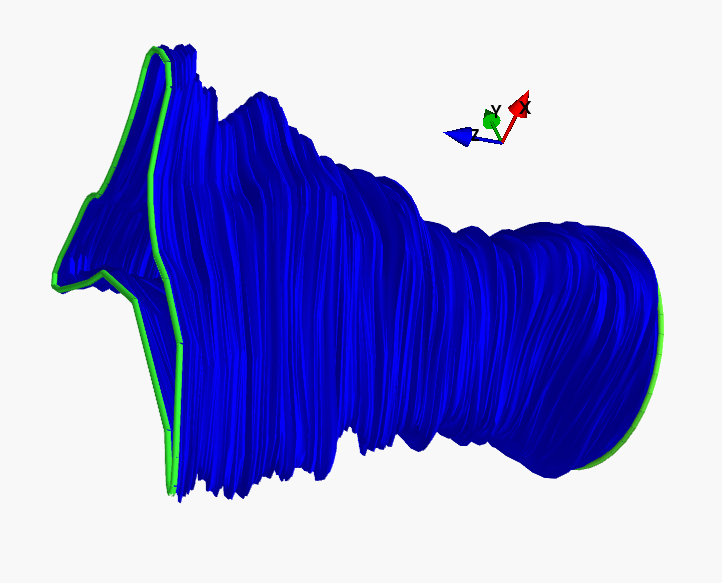}
 \caption{\footnotesize{A simulation of Kunita flow with $40$ points on the unit circle on the left of the figure. The $z$ axis (blue arrow) represents the time.}}
 \label{Kunita}
\end{figure}
Second, our stochastic model is a perturbation of a geodesic evolution and this nice property is illustrated in figures Fig.~\ref{GeodesicNoise0_9}~-~\ref{GeodesicNoise3_5}. 

Figure Fig.~\ref{GeodesicEvolution} shows the geodesic evolution of $40$ equidistributed points on the unit circle for a Gaussian kernel of width $1.0$, the target configuration for the landmarks is obtained through a simple affine transformation that gives the final ellipse. On these simulations the color change only represents time. 
Figures Fig.~\ref{GeodesicNoise0_9}~-~\ref{GeodesicNoise3_5} represent
stochastic perturbations of the previous geodesic; we progressively
increase the standard deviation $\epsilon$ of the noise from
$\sqrt{n}\epsilon=0.9$ to $1.7$ and finally $\sqrt{n}\epsilon=3.5$
(the noise is rescaled w.r.t. the number of landmarks to converge to a
well defined  SPDE at the limit (see \cite{VialardThesis})). Each of
these three figures represents one Monte-Carlo simulation of the 
stochastic model with a simple Euler scheme.
\begin{figure}[htbp]
\hspace{-5mm}
 \begin{minipage}{.5\linewidth}
  \centering	\includegraphics[width=6cm]{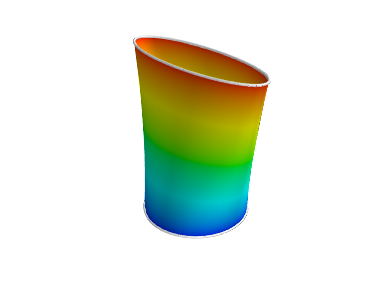}
 \caption{\footnotesize{Geodesic evolution - White unit circle as initial shape.}}
 \label{GeodesicEvolution}
\end{minipage} \hfill
\begin{minipage}{.5\linewidth}
\centering\includegraphics[width=6cm]{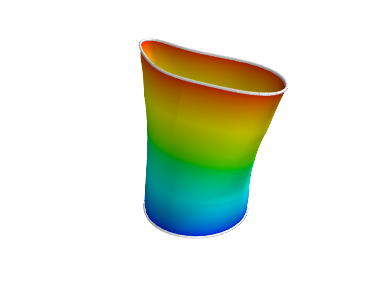}
\caption{\footnotesize{White noise perturbation of the geodesic (same
    initial momentum $p_0$)}, $\sqrt{n}\epsilon=0.9$ }
  \label{GeodesicNoise0_9}
 \end{minipage} \hfill
\end{figure}

\begin{figure}[htbp]
 \hspace{-5mm}\begin{minipage}{.5\linewidth}
  \centering	\includegraphics[width=6cm]{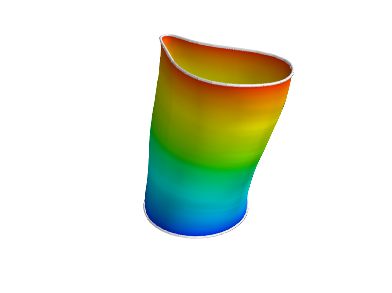}
 \caption{\footnotesize{Increasing the variance of the noise, $\sqrt{n}\epsilon=1.7$}}
 \label{GeodesicNoise1_7}
\end{minipage} \hfill
\begin{minipage}{.5\linewidth}
\centering\includegraphics[width=6cm]{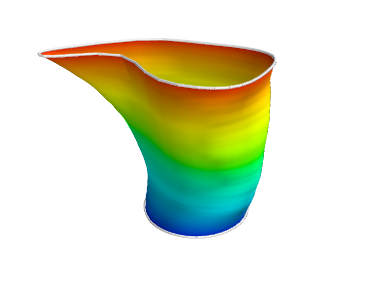}
\caption{\footnotesize{Increasing the variance of the noise, $\sqrt{n}\epsilon=3.5$}}
  \label{GeodesicNoise3_5}
 \end{minipage} \hfill
\end{figure}

The simulations in figures Fig.~\ref{MCnoise+}
show the position at time $1$ of the $40$ points for $5$ Monte-Carlo
simulations. On the two figures we plotted the initial momentum $p_0$
(attached to the $40$ points) associated with the geodesic from the 
initial circle to the target ellipse. As this model was designed to 
produce random shape
evolutions, it can also be used as a generative engine to produce
random shapes.  Increasing the noise also increases the expectation of
the energy of the system since the Ito formula applied on the
Hamiltonian in subsection \ref{nonblowup} shows a linear growth in
time of $H_0$ proportional to $\ve$. This can be guessed when comparing
the two displayed cases in Fig.~\ref{MCnoise+} since in
the second one the noise is $4$ times bigger. For any statistical
estimation of the model parameters, this property should be somehow
taken into account.

\begin{figure}[htbp]
\hspace{-5mm}
\begin{minipage}[htbp]{1.0\linewidth}
  \begin{minipage}{.5\linewidth}
    \hspace{-11mm}\includegraphics[width=9cm]{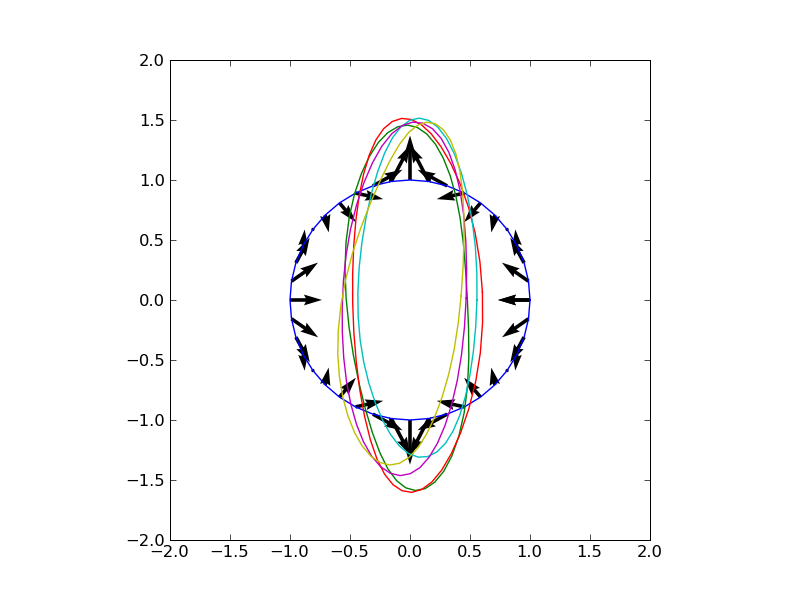}
    % \caption{\footnotesize{}}
    % \label{MCnoise}
  \end{minipage}
  \begin{minipage}{.5\linewidth}
    \hspace{-5mm}\includegraphics[width=9cm]{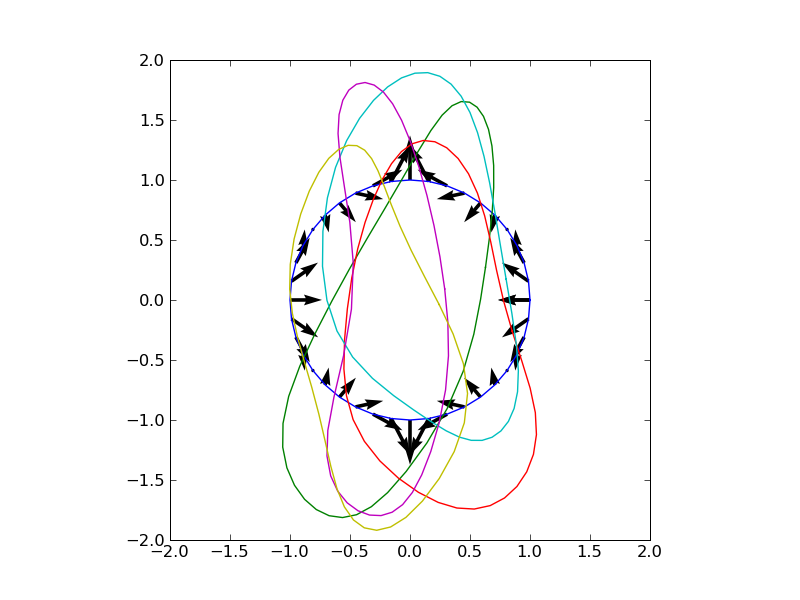}

  \end{minipage} \hfill
\end{minipage}
\caption{\footnotesize{Effects of the standard deviation $\epsilon$ of
    the noise~: $5$ simulations of random deformations of
         the unit circle. The initial momentum $p_0$ is fixed, the
         kernel width $\lambda=1$ (see (\ref{eq:15-4})). Left-hand side~:
         $\sqrt{n}\epsilon=0.25$ ; right-hand side~: $\sqrt{n}\epsilon=1$.}}
  \label{MCnoise+}
\end{figure}
An important feature of this model is that the noise is "read" by 
the kernel. We show in Fig.~\ref{kernel+} simulations of 
the model for a null initial momentum on the same initial shape and we decrease the width of the Gaussian 
kernel from $3$ to $0.3$. The standard deviation of the noise is
constant set to $\sqrt{n}\epsilon=1.0$.
\begin{figure}[htbp]
\hspace{-5mm}
\begin{minipage}[htbp]{1.0\linewidth}
  \begin{minipage}{.5\linewidth}
    \hspace{-11mm}
    \includegraphics[width=9cm]{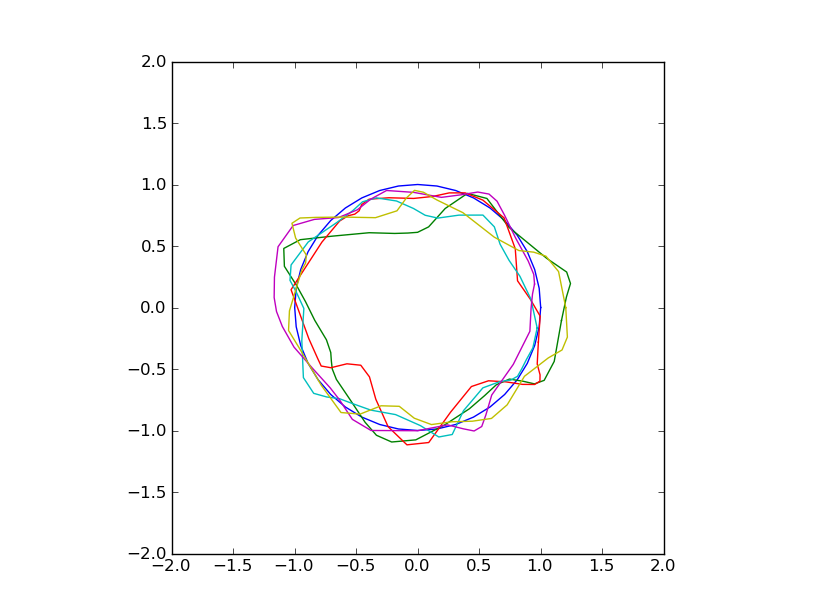}
  \end{minipage}
  \begin{minipage}{.5\linewidth}
    \hspace{-5mm}\includegraphics[width=9cm]{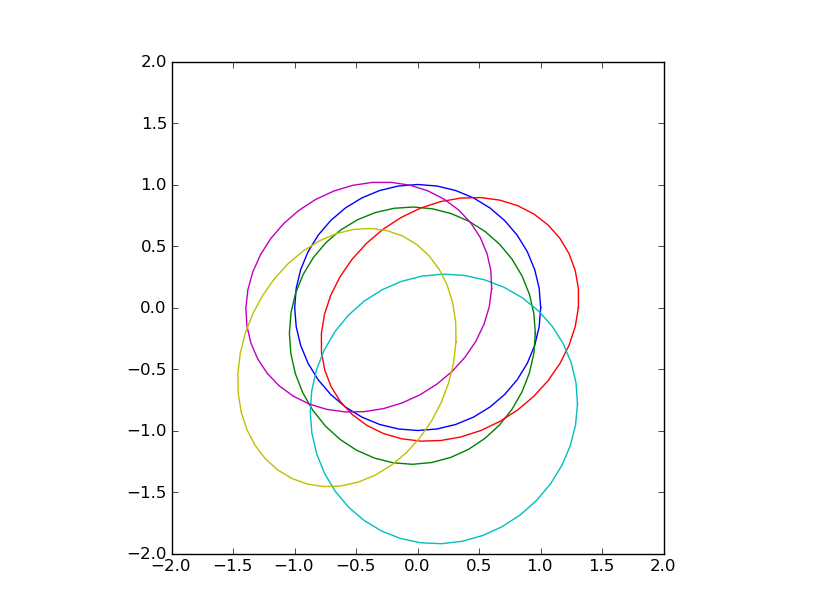}
  \end{minipage} \hfill
\end{minipage}
    \caption{\footnotesize{Effects of the width of the Gaussian
        kernel. Left-hand side~:$\lambda =
        0.3$ ; right-hand side~: $\lambda =
        3.0$.}}
    \label{kernel+}
\end{figure}

These last simulations show the importance of the choice of the kernel and as a by-product the choice of the operator $\ve$ in front of the noise will be also important. Now we can formulate a stochastic model for evolutions of shape that would be closer to realistic evolutions:
\begin{equation} \label{generalized_stochastic_system}
\begin{cases}
dp_t = -\partial_q H_0(p_t,q_t) + u_t + \ve d B_t  \\
dq_t = \partial_p H_0(p_t,q_t) \, ,
\end{cases}
\end{equation}
where $u_t$ is of bounded variations. At this point, we underline that the model parameterization is completely open and it should be tightly related to consistent statistical estimations.
\section{Shape splines on  homogeneous space}
\label{sec:homogeneous}
In this section we provide a more formal and geometrical
picture of what could be an extension of the shape spline  to the
previously mentioned important cases. For this, we need to
introduce some of the standard vocabulary of geometrical
mechanics as developed in \cite{Marsden1999}. We will try  
as much as possible to avoid the conceptual burden of the
intrinsic differential and symplectic geometry through the 
extensive use of local coordinates. Readers looking for
a more intrinsic formulation could refer to  \cite{Marsden1999}.

\subsection{Geometrical setting}
The proposed framework for shape spline is given by three ingredients:
a group $G$ of transformations, a Riemannian manifold $Q$ and a
\emph{left} action of $G$ on $Q$ denoted $(g,q)\to L_q(g)\doteq g\cdot
q$. We will assume also that for any $q\in Q$, $g\to g\cdot q$ is a
surjective submersion (i.e. the differential of $L_q$ has full rank everywhere).
\subsubsection{Local coordinates}
Basically $G$ will be a (finite dimensional) Lie group with Lie algebra
$\mathfrak{G}$ on which we consider a right invariant metric given by
a dot product on $\mathfrak{G}$. 
The infinite dimensional setting where $G$ is a group of
diffeomorphisms is more involved and requires more analytical work 
as in \cite{trouve05:_local_geomet_defor_templ}. 
This is clearly the target setting we have in mind but we want
in this paper to stay away from any complicated analytical developments. To
keep the focus on the global picture, we will assume 
implicitly that we work in a finite dimensional setting 
for which the existence of all the introduced objects is 
straightforward. 

Denoting $q=(q^1,\cdots,q^n)$ local coordinates on $Q$ and since
$(dq^1,\cdots,dq^n)$ is a basis of $T_q^*Q$, we can write any
$\alpha\in T^*_qQ$ as $\alpha=\sum p_idq^i$ so that
$(q^1,\cdots,q^n,p_1,\cdots,p_n)$ are local coordinates on the
cotangent bundle $T^*Q$. Given $q=(q^1,\cdots,q^n)$, we will denote
$p=(p_1,\cdots,p_n)$ a generic element of $T_q^*Q$ and 
$m=(q,p)$ a generic element 
of $T^*Q$ as we did previously in the flat case of 
landmarks.

\subsubsection{Infinitesimal actions and cotangent lift}
The first thing we need is to extend the action of $G$ on $Q$ to an
action of $G$ on the cotangent bundle $T^*Q$. 
Note that the differentiation in $g$ of $g\to q\cdot q$ at $g=\text{Id}_G$ yields
 an infinitesimal action $(\xi,q)\to \xi\cdot q$ for 
$\xi\in \mathfrak{G}$. Differentiation in $q$ yields the action 
$(g,\delta q)\to g\cdot\delta q\in T_{g\cdot q}Q$ for $\delta q\in
T_qQ$ and by duality the action $(g,p)\to g\cdot p\in T_{g\cdot q}^*Q$
for $p\in T_q^*Q$, uniquely defined through the equality 
\begin{equation}
(g\cdot p
 |g\cdot\delta q)\doteq(p|\delta q)\,.\label{eq:18.2.1}
\end{equation}
We denote 
\begin{equation}
(g,\mQ)\to \mQ\cdot g\doteq (g\cdot q,g\cdot p)\label{eq:18.2.6}
\end{equation}
for 
$\mQ=(q,p)\in T^*Q$ the induced action on $T^*Q$. 

In summary, the initial action $g\to g\cdot q$ on $Q$ is naturally
lifted to an action $g\to g\cdot m$ on the cotangent space $T^*Q$
(usually 
called \emph{cotangent lift} \cite{Marsden1999}). 
Differentiating the action $(g,\mQ)\to g\cdot \mQ$ on the
cotangent space $T^*Q$ at $g=\Id_G$, we get the infinitesimal action
on $T^*Q$, which is defined in local coordinates by
$\xi\cdot\mQ=(\xi\cdot q,\xi\cdot p)$ where 
\begin{equation}
(\xi\cdot p |\delta
q)+(p|\xi\cdot\delta q)=0\label{eq:17.2.1}
\end{equation}
as obtained by differentiation of the
conservation equation (\ref{eq:18.2.1}).
\subsection{Euler-Poincar\'e equation}
The initial matching problem between shapes 
in $Q$ is defined as the solution of the optimal control problem with fixed boundary
$$\left|
\begin{array}[h]{l}
  \min_{\xi_t}\frac{1}{2}\int_0^1 (L\xi_t,\xi_t)dt\\
\text{subject to}\\
\dot{q}_t= \xi_t\cdot q_t,\ q_0=q_{\text{init}},\ q_1=q_{\text{targ}}
\end{array}\right.$$
where $L:\mathfrak{G}^*\to\mathfrak{G}$ is the isometry between 
$\mathfrak{G}$ and its dual induced by the metric on $\mathfrak{G}$.

The Hamiltonian associated to the classical matching problem is given in
local coordinates by
$H(q,p,\xi)=(p|\xi \cdot q)-\frac{1}{2}(L\xi|\xi)$
with reduced form
$$H(q,p)\doteq \frac{1}{2}\big(KJ(q,p)\,|\,J(q,p)\big)$$
where $J(q,p)\in\mathfrak{G}^*$ is uniquely defined by
\begin{equation}
(J(q,p)|\xi)=(p|\xi\cdot q)\label{eq:18.2.5}
\end{equation}
for any $\xi\in\mathfrak{G}$ (usually
called the \emph{momentum map}). To obtain the Hamiltonian evolution, we need to compute the variation
of $\delta J$ as a function of the variation $\delta
q$ and $\delta p$ in $q$ and $p$. Introducing in local coordinates the
so-called \emph{symplectic matrix} 
$$\mathbb{J}\doteq\left(
\begin{array}[h]{cc}
0  & \text{Id}_n \\
- \text{Id}_n & 0 
\end{array}\right)$$ 
one checks easily that $(\delta J|
\xi)=(p|\xi\cdot \delta q)+(\delta p |\xi\cdot q)$ so that using
(\ref{eq:17.2.1}) we get 
\begin{equation}
(\delta J|
\xi)= -(\xi\cdot p|\delta q)+(\delta p | \xi\cdot q)= -(\mathbb{J} (\xi\cdot \mQ)|\delta \mQ)
\label{eq:17.2.2}
\end{equation}
and $dH=-\mathbb{J} (KJ\cdot \mQ)$. Since the associated
Hamiltonian evolution equation are given by $\dot{m}=\mathbb{J}dH$ we
get 
\begin{equation}
\dot{m}=Kj\cdot \mQ\text{ with } j=J(q,p)
\label{eq:27.12.1}
\end{equation}
or equivalently
\begin{equation}
\dot{q}=\xi\cdot q\text{ and }
\dot{p}=\xi\cdot p \text{ with } \xi=KJ(q,p)\label{eq:18.2.2}\,.
\end{equation}
This extremely  simple expression of $\dot{m}$ 
in term of the momentum map and the infinitesimal action on the 
cotangent space reveals part of the nice geometrical structure 
underlying the evolution. In this setting, it is interesting to
consider the time evolution of the pair $(q,j)$ instead of the pair
$(q,p)$ since $j$ follows an autonomous equation. Indeed, from
(\ref{eq:17.2.1}) and (\ref{eq:18.2.2}), we get that for 
any $\zeta\in\mathfrak{G}$ we have $(\frac{dj}{dt}\ |\ \zeta)=
(\xi_t\cdot p\ |\ \zeta\cdot q)+(p\ |\ \zeta.(\xi_t\cdot
q))=(p\ |\ \zeta.(\xi_t\cdot q) - \xi_t\cdot (\zeta \cdot q))=
-(p\ |\ \mathrm{ad}_{\xi_t}(\zeta)\cdot q) =
-(j\ |\ \mathrm{ad}_{\xi_t}(\zeta))$ where
$\mathrm{ad}_{\xi}(\zeta)=[\xi,\zeta]$ is the adjoint representation 
of the Lie algebra $\mathfrak{G}$ so that we get 
$$\frac{dj}{dt}+\mathrm{ad}^*_{Kj}j=0\,.$$ 
This equation, called the \emph{Euler-Poincar\'e equation} 
plays a central role in geometric mechanics and more recently in the 
large deformation methods in shape analysis and computational 
anatomy \cite{hty09,Marsden1999}.  

Now consider the perturbed dynamic $\dot{p}=Kj(m).p+u$ with $u\in
T_q^*Q$ or equivalently 
$$\frac{dj}{dt}+\mathrm{ad}^*_{Kj}j=h$$
where $h\doteq J(q,u)$ and the associated optimal control problem for the
state variables $(q,j)\in Q\times \mathfrak{G}^*$ and the cost 
$\frac{1}{2}|u|^2_{q,*}$. The norm
$|u|_{q,*}$ we consider here is the dual norm induced on $T_q^*Q$ by the metric
on $T_qQ$. Let $\mathcal{K}_q:T_q^*Q\to T_qQ$ be the isometry such
that $(u\ |\ \mathcal{K}_qu)=|u|_{q,*}^2$. The Hamiltonian associated to
our new control problem for the costate variable $(p_q,\zeta)\in
T_q^*Q\times\mathfrak{G}$ is given by
\begin{equation}
\mathcal{H}(q,j,p_q,\zeta,u)=% (p_q\ |\ Kj\cdot q)+(-\text{ad}_{Kj}^*j+h\ |\
% p_j)-\frac{1}{2}|u|_{q,*}^2)=
(p_q\ |\ Kj\cdot q)+(-\text{ad}_{Kj}^*j+h\ |\ \zeta)-\frac{1}{2}(\mathcal{K}_qu\ |\ u)\label{eq:18.2.3}
\,.
\end{equation}
To compute its reduced form let us note that from (\ref{eq:17.2.1}),
we get $\frac{\partial}{\partial u}(h\ |\ \zeta)=\zeta\cdot q$. Hence 
$\frac{\partial}{\partial u}\mathcal{H}=0$ implies
$\mathcal{K}_qu=\zeta\cdot q$ giving the reduced Hamiltonian
\begin{eqnarray}
\mathcal{H}(q,j,p_q,\zeta) &= &(p_q\ |\ Kj\cdot q)+(-\text{ad}_{Kj}^*j\ |\
\zeta)+\frac{1}{2}|\zeta\cdot q|^2_q\\
&=&(J(q,p_q)\ |\ Kj)+(j\ |\
\text{ad}_{\zeta}(Kj))+\frac{1}{2}|\zeta\cdot q|^2_q\,,\label{eq:18.2.4}
\end{eqnarray}
where $|\ |_q$ denote the norm on $T_qQ$ given by the Riemmanian
metric on $Q$. The associated Hamiltonian evolution is derived  quite
easily:
\begin{equation}
  \label{eq:19.2.1}\left\{
  \begin{array}[h]{l}
\xi=Kj\\
\frac{dq}{dt}=\xi\cdot q\\
\frac{dp_q}{dt}-\xi.p_q=-\frac{\partial}{\partial
      q}(\frac{1}{2}|\zeta\cdot q|^2_q)\\
\frac{dj}{dt}+\text{ad}_\xi^*(j)=\frac{\partial}{\partial
      \zeta}(\frac{1}{2}|\zeta\cdot q|^2_q)\\
\frac{d\zeta}{dt}+\text{ad}_\zeta(\xi)+K\text{ad}_\zeta^*(j)+KJ(q,p_q)=0
  \end{array}\right.
\end{equation}
Here again, the derivation should be considered at a formal level 
or in a smooth finite dimensional setting since existence results 
are beyond the scope of this paper.
\section{Conclusion}
In this paper, we present new tools for shape evolution analysis
or growth analysis in computational anatomy through the introduction
of second order evolutions. Shape splines seem to overcome some of
the limitations of the previous first order schemes and with their
stochastic counterpart can provide the backbone of new statistical
time regression tools. From various perspectives, shape splines offer 
new interesting mathematical and practical challenges: extensions to
the infinite dimensional case of continuous shapes and to images,
development of efficient and scalable numerical schemes to solve 
the spline estimation problem, integration of time realignment as 
developed in \cite{durlemann}, derivation of more complex stochastic
engines beyond the white-noise situation presented here and development of
consistent statistical schemes in the spirit of \cite{aat07}. 
The solutions to some of these problems appear well within reach.

In this paper we preferred to stick to the finite 
dimensional setting in order to focus on the general picture and avoid more difficult 
analysis, the more technically involved infinite dimensional situation 
has been partially explored in the stochastic case in \cite{VialardThesis}.
\vspace{10mm}
\noindent\textbf{Acknowledgments.}
The authors would like to thank Darryl D. Holm and Colin J. Cotter for a fruitful 
discussion about introducing noise on the momentum variable which was the 
starting point of the stochastic model presented here.
\section{Appendix}
\subsection{Link with cubic splines}
We recall that on a Riemannian manifold $(M,g_M)$, a cubic spline between $(x_0,v_0)$ and $(x_1,v_1)$ is a $C^2$ curve $c:[0,1] \to M$ that minimizes 
\begin{equation} \label{usualsplines}
\mc{I}(c) = \frac{1}{2}\int_0^1 g_M( \nabla_{\dot{c}} \dot{c},\nabla_{\dot{c}} \dot{c} ) \, dt\,.
\end{equation}
The Euler-Lagrange equation for this functional is the following (see \cite{Noakes1,splinesanalyse})
\begin{equation}
\nabla^3_t \dot{c} + R(\nabla_t \dot{c},\dot{c}) \dot{c} = 0\,.
\end{equation}
% Now it can be proven that given boundary conditions, there exists a minimizer of the functional $\mc{I}$ \eqref{usualsplines}.
% \begin{prop} \label{RiemannianSplines}
% If $g$ is defined as $g(u,u) = \langle u^p,k(x,x)u^p \rangle$ then the $g-$cubic splines are the cubic splines for the Riemannian manifold $(\mc{L},k^{-1})$.
% \end{prop}

% \begin{proof}
Let $c: I \to M$ a smooth path, then the covariant derivative is given in coordinates by
$$ \nabla_{\dot{c}} \dot{c} = [\ddot{c}_i + \sum_{k,l} \dot{c}_k \Gamma_{k,l}^i(c) \dot{c}_l]_{i \in [1,r]},$$
and we observe that the second member of the right hand side only depends on $(\dot{c},c)$.
We compare this expression with the Hamiltonian equations (here $M=\mathcal{L}$):
\begin{align}
&\ddot{c} = \frac{d}{dt}k(c,c)p = [\partial_1 k(c,c)](\dot{c}) p + k(c,c) \dot{p} \,, \\
&\ddot{c} = [\partial_1 k(c,c)](k(c,c)^{-1}p)p - k(c,c) \partial_xH + k(c,c)u \,,
\end{align}
where for any $x,y\in \mathbb{R}^{nd}$, $k(x,y)$ denotes the block matrix defined by
$$k(x,y)\doteq (K_V(x_i,y_j))_{1\leq i,j\leq n}\,.$$ 
Now the geodesic equations are given by 
$$ \ddot{c}_i + \sum_{k,l} \dot{c}_k \Gamma_{k,l}^i(c) \dot{c}_l = 0 \; \text{for}\, i \in [1,r]\,,$$
where $\Gamma_{k,l}^i = g_{\mc{L}}( \nabla_{\partial_i}\partial_j,\partial_k )$ are the Christoffel symbols in the chosen coordinates.

However, if $u=0$ we obtain the geodesic equations in the Hamiltonian form.
Then we can identify
$$ -[\sum_{k,l} \dot{c}_k \Gamma_{k,l}^i(c) \dot{c}_l]_{i \in [1,r]} = [\partial_1 k(c,c)](k(c,c)^{-1}p) p - k(c,c) \partial_xH \,,$$
which gives
$$ \nabla_{\dot{c}} \dot{c} = k(c,c)u\,.$$
Now we have
$$ g_{\mc{L}}( \nabla_{\dot{c}} \dot{c}, \nabla_{\dot{c}} \dot{c}) = g_{\mc{L}}(k(c,c)u,k(c,c)u) = \langle u, k(c,c)u \rangle \,,$$
which proves the desired result.
\subsection{Proof of Proposition \ref{prop:10.3.1}}
Let us denote $\partial_{12}K_W(z_1,z_2)\doteq \frac{\partial^2 K_W}{\partial z_1\partial z_2}(z_1,z_2)$ for any $z_1,z_2\in\mathbb{R}^d$ and $\Delta_{z,h,\epsilon}\doteq\frac{1}{\epsilon}(\delta_{z+\epsilon h}-\delta_{z})$ for any $z,h\in\mathbb{R}^d$ and $\epsilon>0$. Let us recall that for any $z_1,z_2\in\mathbb{R}^d$
\begin{equation}
  \label{eq:10.3.1}
  \langle \delta_{z_1},\delta_{z_2}\rangle_{W^*}=K_W(z_1,z_2)\,.
\end{equation}

First, we start with the proof that $\Delta_{z,h,\epsilon}$ converges in $W^*$ when $\epsilon\to 0$. Indeed we get from (\ref{eq:10.3.1}) that
\begin{eqnarray}
\langle \Delta_{z_1,h_1,\epsilon_1},\Delta_{z_2,h_2,\epsilon_2}\rangle_{W^*}&= &\int_0^1\int_0^1 \partial_{12}K_W(z_1+s\epsilon_1 h_1,z_2+t\epsilon_2 h_2)\cdot h_1\otimes h_2\,dsdt\label{eq:10.3.2}\\
& = & \partial_{12}K_W(z_1,z_2)\cdot h_1\otimes h_2 + o((|\epsilon_1h_1|+|\epsilon_2h_2|)|h_1||h_2|)\,,
\end{eqnarray}
so that
$|\Delta_{z,h,\epsilon}-\Delta_{z,h,\epsilon'}|^2_{W^*}=o(|\epsilon|+|\epsilon'|)$. Since
$W^*$ is complete, $\Delta_{z,h,\epsilon}$ converges in $W^*$ to a
limit point denoted $\delta'_{z,h}$ such that $\langle
\delta'_{z_1,h_1},\delta'_{z_2,h_2}\rangle_{W^*}=\lim \langle
\Delta_{z_1,h_1,\epsilon},\Delta_{z_2,h_2,\epsilon}\rangle_{W^2}=\partial_{12}K_W(z_1,z_2)\cdot
h_1\otimes h_2$. In particular $h\to \delta'_{z,h}$ is a continuous
linear mapping from $\mathbb{R}^{nd}$ to $W^*$.

Now, we get from (\ref{eq:10.3.2}) 
$$|\delta_{z,h}-(\delta_z+\delta'_{z,h})|^2_{W^*}=o(|h|^3)$$
so that $\psi$ is differentiable at any location $x\in\mathbb{R}^{nd}$
with differential $\psi'(x)$ given by 
$$\psi'(x)\Delta x\doteq
\frac{1}{n}\sum_{i=1}^n \delta'_{x_i,\Delta x_i}\,.$$
Thus, using again (\ref{eq:10.3.2}), we get
$\langle \psi'(x)\Delta x,\psi'(x)\Delta
x\rangle_{W^*}=\frac{1}{n^2}\sum_{i,j=1}^n\partial_{12}K(x_i,x_j)\cdot
\Delta
x_i\otimes \Delta x_j$ which proves Proposition \ref{prop:10.3.1}. 
\bibliographystyle{abbrv}
\bibliography{SecOrdLand,deforNew,shoot} 

\begin{thebibliography}{10}

\bibitem{MR2062547}
A.~A. Agrachev and Y.~L. Sachkov.
\newblock {\em Control theory from the geometric viewpoint}, volume~87 of {\em
  Encyclopaedia of Mathematical Sciences}.
\newblock Springer-Verlag, Berlin, 2004.
\newblock , Control Theory and Optimization, II.

\bibitem{ahlberg1967theory}
J.~Ahlberg, E.~Nilson, and J.~Walsh.
\newblock {The theory of splines and their applications}.
\newblock {\em Mathematics in Science and Engineering}, 38, 1967.

\bibitem{aat07}
S.~Allassonni{\`e}re, Y.~Amit, and A.~Trouv{\'e}.
\newblock Towards a coherent statistical framework for dense deformable
  template estimation.
\newblock {\em J. R. Statist. Soc. B}, 69(1):3--29, 2007.

\bibitem{bb37524}
S.~Allassonniere, A.~Trouvé, and L.~Younes.
\newblock Geodesic shooting and diffeomorphic matching via textured meshes.
\newblock In {\em EMMCVPR05}, pages 365--381, 2005.

\bibitem{splinesCk}
M.~Camarinha, F.~S. Leite, and P.Crouch.
\newblock Splines of class $\mathcal{C}^k$ on non-euclidean spaces.
\newblock {\em IMA Journal of Mathematical Control \& Information},
  12:399--410, 1995.

\bibitem{Crouch}
P.~Crouch and F.~S. Leite.
\newblock The dynamic interpolation problem: On {R}iemannian manifold, {L}ie
  groups and symmetric spaces.
\newblock {\em Journal of dynamical \& Control Systems}, 1:177--202, 1995.

\bibitem{4408977}
B.~Davis, P.~Fletcher, E.~Bullitt, and S.~Joshi.
\newblock Population shape regression from random design data.
\newblock In {\em Computer Vision, 2007. ICCV 2007. IEEE 11th International
  Conference on}, pages 1--7, Oct. 2007.

\bibitem{drydenbook}
I.~L. Dryden and K.~V. Mardia.
\newblock {\em Statistical Shape Analysis}.
\newblock 1998.

\bibitem{durlemann}
S.~Durrleman, X.~Pennec, G.~Gerig, A.~Trouvé, and N.~Ayache.
\newblock Spatiotemporal atlas estimation for developmental delay detection in
  longitudinal datasets.
\newblock In {\em Medical Image Computing and Computer Assisted Intervention},
  September 2009.

\bibitem{edwin1957jaynes}
T.~Edwin.
\newblock {Jaynes. Information theory and statistical mechanics}.
\newblock {\em Physical Review}, 106(4):620--630, 1957.

\bibitem{splinesanalyse}
R.~Giambo and F.~Giannoni.
\newblock An analytical theory for riemannian cubic polynomials.
\newblock {\em IMA Journal of Mathematical Control \& Information},
  19:445--460, 2002.

\bibitem{glaunes2004diffeomorphic}
J.~Glaunes, A.~Trouve, and L.~Younes.
\newblock {Diffeomorphic matching of distributions: A new approach for
  unlabelled point-sets and sub-manifolds matching}.
\newblock In {\em Computer Vision and Pattern Recognition}, volume~2, 2004.

\bibitem{gty06}
J.~Glaun{\`e}s, A.~Trouv{\'e}, and L.~Younes.
\newblock Modeling planar shape variation via {Hamiltonian} flows of curves.
\newblock In H.~Krim and A.~Yezzi, editors, {\em Statistics and Analysis of
  Shapes}, pages 335--361. Springer Birkhauser, 2006.

\bibitem{DBLP:conf/isbi/GrenanderSS06}
U.~Grenander, A.~Srivastava, and S.~Saini.
\newblock Characterization of biological growth using iterated diffeomorphisms.
\newblock In {\em ISBI}, pages 1136--1139, 2006.

\bibitem{DBLP:journals/tmi/GrenanderSS07}
U.~Grenander, A.~Srivastava, and S.~Saini.
\newblock A pattern-theoretic characterization of biological growth.
\newblock {\em IEEE Trans. Med. Imaging}, 26(5):648--659, 2007.

\bibitem{hty09}
D.~R. Holm, A.~Trouv{\'e}, and L.~Younes.
\newblock The {E}uler {P}oincar{\'e} theory of metamorphosis.
\newblock {\em Quarterly of Applied Mathematics}, 2009.
\newblock (to appear).

\bibitem{parallel}
R.~V. Iyer, R.~Holsapple, and D.~Doman.
\newblock Optimal control problems on parallelizable {R}iemannian manifolds:
  Theory and applications.
\newblock {\em ESAIM. COCV}, 12:1--11, 2006.

\bibitem{jackson90:_dynam_inter_and_applic_to_fligh_contr}
J.~Jackson.
\newblock {\em Dynamic interpolation and application to flight control}.
\newblock PhD thesis, Arizona State University, 1990.

\bibitem{landmark_matching}
S.~Joshi and M.~Miller.
\newblock Landmark matching via large deformation diffeomorphisms.
\newblock {\em International Journal of Computer Vision}, 2000.

\bibitem{kapur1989maximum}
J.~Kapur.
\newblock {\em {Maximum-entropy models in science and engineering}}.
\newblock Wiley-Interscience, 1989.

\bibitem{kendalldiffusion}
D.~G. Kendall.
\newblock The diffusion of shape.
\newblock {\em Advances in Applied Probability}, vol 9:pp. 428--430, 1977.

\bibitem{4541298}
A.~Khan and M.~Beg.
\newblock Representation of time-varying shapes in the large deformation
  diffeomorphic framework.
\newblock In {\em Biomedical Imaging: From Nano to Macro, 2008. ISBI 2008. 5th
  IEEE International Symposium on}, pages 1521--1524, May 2008.

\bibitem{Kunita}
H.~Kunita.
\newblock {\em Stochastic flows and Stochastic Differential Equations}.
\newblock Cambridge Studies in Advanced Mathematics, 1997.

\bibitem{macki1982introduction}
J.~Macki and A.~Strauss.
\newblock {\em {Introduction to optimal control theory}}.
\newblock Springer, 1982.

\bibitem{Marsden1999}
J.~Marsden, T.~Ratiu, and Maisser.
\newblock {\em Introduction to mechanics and symmetry}.
\newblock Springer, 1999.

\bibitem{michor-2007-23}
P.~W. Michor and D.~Mumford.
\newblock An overview of the riemannian metrics on spaces of curves using the
  hamiltonian approach.
\newblock {\em Applied and Computational Harmonic Analysis}, 23:74, 2007.

\bibitem{mty02}
M.~I. Miller, A.~{Trouv{\'e}}, and L.~Younes.
\newblock On the metrics and {Euler-Lagrange} equations of computational
  anatomy.
\newblock {\em Annual Review of biomedical Engineering}, 4:375--405, 2002.

\bibitem{1122513}
M.~I. Miller, A.~Trouv\'{e}, and L.~Younes.
\newblock Geodesic shooting for computational anatomy.
\newblock {\em J. Math. Imaging Vis.}, 24(2):209--228, 2006.

\bibitem{Noakes1}
L.~Noakes, G.~Heinzinger, and B.~Paden.
\newblock Cubic splines on curved spaces.
\newblock {\em IMA Journal of Mathematical Control \& Information}, 6:465--473,
  1989.

\bibitem{DBLP:conf/iciar/PortmanGV09}
N.~Portman, U.~Grenander, and E.~R. Vrscay.
\newblock Direct estimation of biological growth properties from image data
  using the "grid" model.
\newblock In {\em ICIAR}, pages 832--843, 2009.

\bibitem{schoenberg46:_contr_to_probl_of_approx}
I.~Schoenberg.
\newblock Contributions to the problem of approximation of equidistant data by
  analytic functions.
\newblock {\em Quart. Appl. Math. 4, 45-99 (Part A), 112-141 (Part B)}, 1946.

\bibitem{DBLP:conf/emmcvpr/SrivastavaSDG05}
A.~Srivastava, S.~Saini, Z.~Ding, and U.~Grenander.
\newblock Maximum-likelihood estimation of biological growth variables.
\newblock pages 107--118, 2005.

\bibitem{trouve05:_local_geomet_defor_templ}
A.~Trouvé and L.~Younes.
\newblock Local geometry of deformable templates.
\newblock {\em Siam Journal of Mathematical Analysis}, 2005.

\bibitem{VialardThesis}
F.-X. Vialard.
\newblock {\em Hamiltonian Approach to Shape Spaces in a Diffeomorphic
  Framework : From the Discontinuous Image Matching Problem to a Stochastic
  Growth Model}.
\newblock PhD thesis, ENS Cachan, 2009.

\bibitem{Younes2009S40}
L.~Younes, F.~Arrate, and M.~I. Miller.
\newblock Evolutions equations in computational anatomy.
\newblock {\em NeuroImage}, 45(1, Supplement 1):S40 -- S50, 2009.
\newblock Mathematics in Brain Imaging.

\end{thebibliography}
\end{document}